\documentclass[12pt]{amsart}
\usepackage{amsmath, amsthm, amssymb}
\usepackage[headings]{fullpage}
\usepackage[pdfstartview={FitH}]{hyperref}
\usepackage{enumitem} 
\usepackage{xypic}

\usepackage{color}   

\numberwithin{equation}{section}

\theoremstyle{plain}
\newtheorem{theorem}[equation]{Theorem}
\newtheorem{corollary}[equation]{Corollary}
\newtheorem{lemma}[equation]{Lemma}
\newtheorem{proposition}[equation]{Proposition}
\newtheorem*{ClassicalTheorem}{Classical Diagonalization Theorem}
\newtheorem*{InfiniteDiagonal}{Infinite-Dimensional Diagonalization Theorem}
\newtheorem*{InfiniteWA}{Infinite-Dimensional Wedderburn-Artin Theorem}

\theoremstyle{definition}
\newtheorem{definition}[equation]{Definition}

\newtheorem{example}[equation]{Example}
 
\newtheorem{remark}[equation]{Remark}

\newcommand{\cat}[1]{\ensuremath{\mathsf{#1}}}
\newcommand{\op}{\ensuremath{^\mathrm{\,op}}}

\newcommand{\TMod}{\cat{TMod}} 
\newcommand{\PC}{\cat{PC}}
\newcommand{\Func}{\cat{Func}}
\newcommand{\Set}{\cat{Set}}
\newcommand{\TRing}{\cat{TRing}}
\newcommand{\cTRing}{\cat{cTRing}}
\newcommand{\TAlg}{\cat{TAlg}}
\newcommand{\lann}{\operatorname{ann}_\ell}

\newcommand{\eps}{\varepsilon}
\newcommand{\ev}{\mathrm{ev}}

\newcommand{\leql}{\leq_\ell}
\newcommand{\leqr}{\leq_r}

\DeclareMathOperator{\Hom}{Hom}
\DeclareMathOperator{\End}{End}

\DeclareMathOperator{\diag}{diag}

\DeclareMathOperator{\range}{range}
\DeclareMathOperator{\Span}{Span}
\DeclareMathOperator{\soc}{soc}
\DeclareMathOperator{\Spec}{Spec}
 
\newcommand{\Z}{\mathbb{Z}}

\newcommand{\M}{\mathbb{M}}
\newcommand{\K}{K}
\newcommand{\F}{\mathbb{F}}

\newcommand{\Abar}{\overline{A}}

\newcommand{\B}{\mathcal{B}}
\newcommand{\U}{\mathcal{U}}
\newcommand{\p}{\mathfrak{p}}
\newcommand{\m}{\mathfrak{m}}
\newcommand{\D}{\mathcal{D}}

\newcommand{\two}{\mathbf{2}}

\newcommand{\Cc}{\mathcal{C}}

\newcommand{\Ss}{\mathcal{S}}



 \begin{document}

\title{Infinite-dimensional diagonalization and semisimplicity}

\date{July 30, 2016}

\keywords{Diagonalizable operators, Wedderburn-Artin theorem, finite topology, 
pro-discrete module, pseudocompact algebra, function algebra}

\subjclass[2010]{
Primary:
15A04, 
16S50; 
Secondary:
15A27, 
16W80, 
18E15 
}

\begin{abstract}
We characterize the diagonalizable subalgebras of $\End(V)$, the full ring of 
linear operators on a vector space $V$ over a field, in a manner that directly generalizes
the classical theory of diagonalizable algebras of operators on a finite-dimensional 
vector space. Our characterizations are formulated in terms of a natural topology 
(the ``finite topology'') on $\End(V)$, which reduces to the discrete topology in the case 
where $V$ is finite-dimensional.
We further investigate when two subalgebras of operators can and cannot be 
simultaneously diagonalized, as well as the closure of the set of diagonalizable 
operators within $\End(V)$. 
Motivated by the classical link between diagonalizability and semisimplicity, we also
give an infinite-dimensional generalization of the Wedderburn-Artin theorem, providing 
a number of equivalent characterizations of left pseudocompact, Jacoboson semisimple 
rings that parallel various characterizations of artinian semisimple rings. 
This theorem unifies a number of related results in the literature, including the structure 
of linearly compact, Jacobson semsimple rings and cosemisimple coalgebras over a field.
%
\end{abstract}

\author{Miodrag C. Iovanov}
\address{University of Iowa \\
Department of Mathematics \\
McLean Hall \\
Iowa City, IA, 52242, USA}
\address{University of Bucharest\\
 Str.\ Academiei 14\\
 Bucharest, Romania}
\email{yovanov@gmail.com; miodrag-iovanov@uiowa.edu }
\thanks{Iovanov was partially supported by the UEFISCDI [grant number PN-II-ID-PCE-2011-3-0635], contract no. 253/5.10.2011 of CNCSIS.}

\author{Zachary Mesyan}
\address{Department of Mathematics \\
University of Colorado \\
Colorado Springs, CO, 80918, USA}
\email{zmesyan@uccs.edu}

\author{Manuel L. Reyes}
\address{Department of Mathematics \\
Bowdoin College \\
Brunswick, ME 04011--8486, USA}
\email{reyes@bowdoin.edu}
\thanks{Reyes was supported by NSF grant DMS-1407152}

\maketitle

\section{Introduction}
\label{sec:intro}

Let $V$ be a vector space over a field $\K$, and let $\End(V)$ be the algebra of
$\K$-linear operators on $V$. Given any basis $\B$ of $V$, an operator $T \in \End(V)$ is 
said to be \emph{diagonalizable with respect to $\B$} (or \emph{$\B$-diagonalizable}) if every 
element of $\B$ is an eigenvector for $T$. It is easy to see that the set $\diag(\B)$ of all 
$\B$-diagonalizable  operators forms a maximal commutative subalgebra of $\End(V)$, isomorphic 
to the product algebra  $\K^\B = \prod_\B \K \cong \K^{\dim(V)}$ (for details, see
Proposition~\ref{prop:diagonal structure} below). 

A subalgebra $D \subseteq \End(V)$ is said to be \emph{diagonalizable} if 
$D \subseteq \diag(\B)$ for some basis $\B$ of $V$; this is evidently equivalent to
the condition that, for any basis $\B$ of $V$, there exists a unit $u \in \End(V)$ such that
$uDu^{-1} \subseteq \diag(\B)$.
Clearly every diagonalizable subalgebra of $\End(V)$ is commutative. Our goal is to characterize 
which commutative subalgebras of $\End(V)$ are diagonalizable.

For motivation, we recall the classical result from linear algebra characterizing the diagonalizable 
operators on a finite-dimensional vector space. An individual operator is diagonalizable if 
and only if its minimal polynomial splits into linear factors over $\K$ and has no repeated 
roots; in the case where $\K$ is algebraically closed, this is of course equivalent to the property 
that the minimal polynomial has no repeated roots. For subalgebras, this implies the following.

\begin{ClassicalTheorem}
Let $V$ be a finite-dimensional vector space over a field $\K$, and let $C$ be a
commutative subalgebra of $\End(V)$. The following are equivalent:
\begin{enumerate}[label=(\alph*)]
\item $C$ is diagonalizable;
\item $C \cong \K^n$ as $\K$-algebras for some integer $1 \leq n \leq \dim(V)$;
\item $C$ is spanned by an orthogonal set of idempotents whose sum is $1$.  
\end{enumerate}
If $\K$ is algebraically closed, then these conditions are further equivalent to:
\begin{enumerate}[label=(\alph*), resume]
\item $C$ is a Jacobson semisimple $\K$-algebra;
\item $C$ is a reduced $\K$-algebra (i.e., has no nonzero nilpotent elements).
\end{enumerate}
\end{ClassicalTheorem}

(The equivalence (a)$\Leftrightarrow$(b) is mentioned, for instance, 
in~\cite[VII.5.7]{Bourbaki:algebre}, the equivalence (b)$\Leftrightarrow$(c) is apparent, and 
the further equivalence of~(b) with (d)--(e) is a straightforward restriction of the Wedderburn-Artin
structure theorem to finite-dimensional algebras over algebraically closed fields.)

In particular, the diagonalizability of a commutative subalgebra $C$ of $\End(V)$ is 
completely determined by the isomorphism class of $C$ as a $\K$-algebra. 
However, in the case where $V$ is infinite-dimensional, it is impossible to determine whether
a commutative subalgebra $C \subseteq \End(V)$ is diagonalizable from purely
algebraic properties of $C$. Indeed, assume for simplicity that $\K$ is an infinite
field, and let $V$ be a vector space with basis $\{v_n \mid n = 1, 2, 3, \dots\}$.
Let $T \in \End(V)$ be diagonalizable with infinitely many eigenvalues, and let
$S \in \End(V)$ be the ``right shift operator'' $S(v_n) = v_{n+1}$. Then
$\K[T] \cong \K[S]$ are both isomorphic to a polynomial algebra $\K[x]$ (as neither
$T$ nor $S$ satisfies any polynomial relation), while $\K[T]$ is diagonalizable 
but $\K[S]$ is not (for instance, because $S$ has no eigenvalues).

Therefore the study of diagonalizability over infinite-dimensional spaces requires
us to consider some extra structure on algebras of operators. Our approach will be to 
consider a natural topology on $\End(V)$, which is then inherited by its subalgebras. 
(Of course, this is analogous to the use of functional analysis to extend results about 
operators on Hilbert spaces from finite-dimensional linear algebra to the infinite-dimensional 
setting.)
Specifically, we consider the \emph{finite topology} on the algebra $\End(V)$, also called the
\emph{function topology} or the topology of \emph{pointwise convergence}.
This is the topology with a basis of open sets of the form $\{S \in \End(V) \mid S(x_i) = 
T(x_i),\, i=1,\dots,n\}$ for fixed $x_1,\dots,x_n\in V$ and $T \in \End(V)$.

It will be shown below (Lemma~\ref{lem:commutativeclosure}) that the closure of a commutative 
subalgebra of $\End(V)$ is again commutative; in particular, every 
maximal commutative subalgebra is closed. Thus after replacing a commutative subalgebra with 
its closure, we may reduce the problem of characterizing diagonalizable subalgebras of $\End(V)$ 
to that of characterizing which \emph{closed} subalgebras are diagonalizable. The following major
result, given in Theorem~\ref{thm:diag}, is a generalization of the Classical Diagonalization 
Theorem, which shows that diagonalizability of a closed subalgebra of $\End(V)$ can be detected 
from its internal structure as a topological algebra. 
We say that a commutative topological $\K$-algebra is \emph{$\K$-pseudocompact} if it is an inverse
limit (in the category of topological algebras) of finite-dimensional discrete $\K$-algebras;
see Section~\ref{sec:WA} for further details.
 
\begin{InfiniteDiagonal}
Let $V$ be a vector space over a field $\K$, and let $C$ be a closed commutative subalgebra of 
$\End(V)$. The following are equivalent:
\begin{enumerate}[label=(\alph*)]
\item $C$ is diagonalizable;
\item $C \cong \K^\Omega$ as topological $\K$-algebras, for some cardinal $1 \leq \Omega 
\leq \dim(V)$;
\item $C$ contains an orthogonal set of idempotents $\{E_i\}$ whose linear span is dense in $C$,
such that the net of finite sums of the $E_i$ converges to $1$.
\end{enumerate}
If $\K$ is algebraically closed, then these conditions are further equivalent to:
\begin{enumerate}[label=(\alph*), resume]
\item $C$ is $\K$-pseudocompact and Jacobson semisimple;
\item $C$ is $\K$-pseudocompact and topologically reduced (i.e., has no nonzero topologically 
nilpotent elements).
\end{enumerate}
\end{InfiniteDiagonal}

In the classical case of a finite-dimensional vector space $V$ over a field $K$, because every 
diagonalizable algebra of operators is isomorphic to $K^n$, we see that diagonalizable algebras
are semisimple.
Similarly, in the case where $V$ is infinite-dimensional, every closed diagonalizable algebra of
operators on $V$ satisfies a suitable version of semisimplicity. Thus in Theorem~\ref{thm:WA}, our
other major result, we give a version of the  Wedderburn-Artin theorem that is suitable for our context 
of infinite-dimensional linear algebra, with the dual purpose of extending the classical connection 
above and preparing for the proof of Theorem~\ref{thm:diag}. 
This result characterizes those topological rings that are isomorphic to $\prod_{i}\End_{D_i}(V_i)$ 
where the $V_i$ are arbitrary right vector spaces over division rings $D_i$.
It is presented in the context of \emph{left pseudocompact (topological) rings} (as defined
by Gabriel~\cite{G}), and the section begins with a basic account of such rings and
and modules. In Theorem~\ref{thm:WA}, among other equivalences, we show the following.

\begin{InfiniteWA}
Let $R$ be a topological ring. Then the following are equivalent:
\begin{enumerate}[label=(\alph*)]
\item As a left topological module, $R$ is a product of simple discrete left modules;
\item $R \cong \prod \End_{D_i}(V_i)$ as topological rings, where each $V_i$ is a right vector
space over a division ring $D_i$ and each $\End_{D_i}(V_i)$ is given the finite topology;
\item $R$ is left pesudocompact and Jacobson semisimple;
\item $R$ is left pseudocompact and every pseudocompact left $R$-module is a product
of discrete simple modules.
\end{enumerate}
\end{InfiniteWA}

This result generalizes the classical Wedderburn-Artin Theorem (which is recovered in
the case when $R$ carries the discrete topology), and as we explain after Theorem~\ref{thm:WA}, 
it provides a unifying context for a number of other related results in the literature, including the 
structure  of linearly compact, Jacobson semisimple rings, and the characterization of cosemisimple 
coalgebras and their dual algebras.

We now give a tour of the major ideas covered in this paper, by way of outlining each section.
As the problem of characterizing diagonalizable subalgebras of $\End(V)$ is framed in a purely
algebraic context, we anticipate that some readers may not have much familiarity with the 
topological algebra required to prove our results.
Thus we feel that it is appropriate to briefly recall some of the basic theory of topological
rings and modules. Most of this review is carried out in Section~\ref{sec:topology}.
Selected topics include topological sums of orthogonal collections of idempotents,
as well as topological modules that are inverse limits of discrete modules.

Section~\ref{sec:WA} begins with a basic overview of pseudocompact and linearly compact
modules and rings, including a brief review of Gabriel duality for pseudocompact modules. After
a discussion of a suitable topological version of the Jacobson radical, we proceed to our infinite
Wedderburn-Artin theory which culminates in Theorem~\ref{thm:WA}. The remainder of the
section consists of various applications of this theorem. This includes a symmetric version
of the Infinite-Dimensional Wedderburn-Artin Theorem, which characterizes topological rings 
of the form $\prod \M_{n_i}(D_i)$, where the $D_i$ are division rings, the $n_i \geq 1$ are 
integers, and each component matrix ring is equipped with the discrete topology. The section 
ends with applications to the structure of cosemisimple coalgebras.

The structure and characterization of diagonalizable algebras of operators is addressed in
Section~\ref{sec:diagonal}. If $\B$ is a basis of a vector space $V$, it is shown that the
subalgebra of $\End(V)$ of $\B$-diagonalizable operators is topologically isomorphic to $\K^\B$,
where $\K$ is discretely topologized. Through a detailed study of topological algebras of
the form $\K^X$ for some set $X$, we show that such ``function algebras'' with continuous
algebra homomorphisms form a category that is dual to the category of sets. This is applied 
in Theorem~\ref{thm:diag} to characterize closed diagonalizable subalgebras of $\End(V)$. 
We also study diagonalizable subalgebras that are not necessarily closed and individual
diagonalizable operators, still making use of the finite topology on $\End(V)$. 
We then turn our attention to the problem of simultaneous diagonalization of
operators, showing in that if $C,D \subseteq \End(V)$ are diagonalizable subalgebras that 
centralize one another, then they are simultaneously diagonalizable.
We also present examples of sets of commuting diagonalizable operators that cannot 
be simultaneously diagonalized. Finally, we describe the closure $\overline{\D}$ of the set 
$\D = \D(V)$ of diagonalizable operators in $\End(V)$. Here the order of the field of scalars 
plays an important role.
If $\K$ is finite, we have $\D = \overline{\D}$; in the case where $\K$ is infinite, $\overline{\D}$ 
is shown to consist of those operators $T$ that are diagonalizable on the torsion part of $V$
when regarded as a $\K[x]$-module via the action of $T$.

\subsection{A reader's guide to diagonalization}

While our major motivation for the present investigation was to characterize the diagonalizable
subalgebras of $\End(V)$, we have attempted to place this result in a broader context within
infinite-dimensional linear algebra.  In the classical finite-dimensional case, diagonalization is
connected to the structure theory of finite-dimensional algebras and to algebras of functions (though
one typically does not think beyond polynomial functions in classical spectral theory). 
Our attempt to understand infinite-dimensional diagonalization from both of these perspectives 
led us to incorporate aspects of the theories of pseudocompact algebras and ``function algebras.''

We anticipate that some readers may appreciate a shorter path to understanding only the 
proof of the characterization of diagonalizability in Theorem~\ref{thm:diag}; for such readers, 
we provide the following outline.
The definition of the finite topology on $\End(V)$ is the most important portion of
Section~\ref{sec:topology}, and the discussion around Lemmas~\ref{lem:open} 
and~\ref{lem:prodiscrete} is intended to help the reader better understand the structure of 
pseudocompact modules. Other results from Section~\ref{sec:topology} can mostly be referred to 
as needed, although readers unfamiliar with topological algebra may benefit from a thorough reading 
of this section.
The basic definition of pseudocompactness should be understood from Section~\ref{sec:WA}. 
The only results from this section that are crucial to later diagonalizability results are
Theorem~\ref{thm:WA.jq} and the implication (e)$\Rightarrow$(d) from Theorem~\ref{thm:WA}, 
the latter of which is established in~\cite[Proposition~IV.12]{G}, or through the
implications (e)$\Rightarrow$(f)$\Rightarrow$(d) using~\cite[Theorem~29.7]{Warner}.
With this background, readers can proceed to Section~\ref{sec:diagonal}. Finally, some readers
may wish to note that Theorem~\ref{thm:duality} is only used in Corollary~\ref{cor:subalg} to 
bound the cardinality of an index set; this bound may be achieved through a more elementary
argument via topological algebra, an exercise which is encouraged for those who are interested.

\subsection{Some conventions}

While diagonalizability of an operator on a vector space depends keenly upon the field $\K$ of
scalars under consideration, we will often suppress explicit reference to $\K$ in notation such
as $\End(V)$ or $\D(V)$ (and similarly for the case of $\End(V)$ when $V$ is a vector space over
a division ring). This does not pose any serious risk of confusion because we work with a single
field of scalars $\K$, with no field extensions in sight. 

Unless otherwise noted, we follow the convention that all rings are associative with a multiplicative identity, 
and all ring homomorphisms preserve the identity. Given a ring $R$, we let $J(R)$ denote its
Jacobson radical. Further, we emphasize that a module $M$ is a left (respectively, right) $R$-module
via the notation ${}_R M$ (respectively, $M_R$).

\section{Topological algebra in $\mathrm{End}(V)$}
\label{sec:topology}

In this section we collect some basic definitions and results from topological algebra that 
will be used in later sections, beginning with a quick review of the most fundamental notions.
We expect that readers are familiar with basic point-set topology, particularly including product
topologies and convergence of nets (as in~\cite[Chapters~2--3]{Kelley}, for instance).

Recall that a \emph{topological abelian group} is an abelian group $G$ equiped with a topology
for which addition and negation form continuous functions $+ \colon G \times G \to G$ and 
$- \colon G \to G$, where $G \times G$ is considered with the product topology.

A \emph{topological ring} is a ring $R$ equipped with a topology such that $R$ forms a 
topological abelian group with respect to its addition, and such that multiplication forms a 
continuous function $R \times R \to R$.
Given a topological ring $R$, a \emph{left topological module} over $R$ is a topological abelian 
group $M$ endowed with a left $R$-module structure, such that $R$-scalar multiplication 
forms a continuous function $R \times M \to M$. 
Right topological modules are similarly defined. 
Any subring or submodule of a topological ring or module may be considered again as a
topological ring or module in its subspace topology. 
The reader is referred to~\cite{Warner} for further background on topological rings 
and modules.

All topologies that we consider in this paper will be Hausdorff. A topological abelian group is 
Hausdorff if and only if the singleton $\{0\}$ is closed for its topology~\cite[Theorem~3.4]{Warner}.
Note that the quotient of a topological ring by an ideal or the quotient of a topological module
by a submodule is again a topological ring or module under the quotient topology (the finest topology
with respect to which the quotient map is continuous); if the ideal
or submodule is closed, then the induced topology on the quotient will consequently be
Hausdorff.

We will also require the notion of completeness for topological rings and modules.
Let $G$ be a Hausdorff topological abelian group, so that any net in $G$ has at most
one limit (see \cite{Kelley}, Theorem~3 of Chapter~2). We say that a net 
$(g_i)_{i \in I}$ in $G$ is \emph{Cauchy} if, for every open
neighborhood $U \subseteq G$ of $0$, there exists $N \in I$ such that,
for all $m,n \geq N$ in the directed set $I$, one has $g_m - g_n \in U$. We say
that $G$ is \emph{complete} if every Cauchy net in $G$ converges. As closed
subsets contain limits of nets, a closed subgroup of a complete topological abelian
group is complete.
(Note that some references such as~\cite[III.3]{Bourbaki:topology} and~\cite[I.7]{Warner} define
completeness in terms of Cauchy \emph{filters} rather than Cauchy nets. However, one can reconcile 
these two apparently different notions as equivalent by noting the connection between convergence
of nets and convergence of filters, as in~\cite[Problem~2.L(f)]{Kelley}.)

A left topological module $M$ over a topological ring $R$ is said to be \emph{linearly
topologized} if it has a neighborhood basis of $0$ consisting of submodules of $M$.
We say that a ring $R$ has a \emph{left linear topology} (or is \emph{left linearly topologized})
if ${}_R R$ is a linearly topologized left $R$-module (i.e., $R$ has a neighborhood basis of 
$0$ consisting of left ideals).
Any subring or submodule of a (left) linearly topologized ring is clearly (left) linearly 
topologized in the induced topology.

\subsection{The finite topology}

Let $X$ be a set and $Y$ a topological space, and let $Y^X$ denote the set of all functions 
$X \to Y$. The \emph{topology of pointwise convergence} on $Y^X$ is the product topology 
under the identification $Y^X = \prod_X Y$.  

If $Y$ has the discrete topology, $Y^X$ has a base of open sets given by sets of the 
following form, for any fixed finite lists of elements  $x_1, \dots, x_n \in X$ and 
$y_1, \dots, y_n \in Y$:
\[
\{f \colon X \to Y \mid f(x_1) = y_1, \dots, f(x_n) = y_n\}.
\]
This is often called the \emph{finite topology} on $Y^X$; as a product of discrete spaces, 
this space is locally compact and Hausdorff.

Now let $V$ be a right vector space over a division ring $D$. Then $\End(V) = \End_D(V) \subseteq V^V$
is a closed subset of $V^V$ (as in the proof of~\cite[Proposition~1.2.1]{DNR}). 
It inherits a topology from the finite topology on $V^V$; we will also refer to this
as the \emph{finite topology} on $\End(V)$.  
Under this topology, $\End(V)$ is a topological ring~\cite[Theorem~29.1]{Warner}.
Alternatively, we may view the finite topology on $\End(V)$ as the left
linear topology generated by the neighborhood
base of $0$ given by the sets of the following form, where $X \subseteq V$
is a finite-dimensional subspace:
\[
X^\perp = \{T \in \End(V) \mid T|_X = 0\}.
\]
These neighborhoods of $0$ coincide with the left annihilators of the form
$\lann(F)$ where $F \in \End(V)$ has finite rank. Furthermore, for every such left ideal
$I = X^\perp$, the left $\End(V)$-module $\End(V)/I$ has finite length.
Indeed, one can readily verify that the map $\End(V) \to \Hom_D(X,V)$ given
by $T \mapsto T|_X$ fits into the short exact sequence of left
$\End(V)$-modules
\[
0 \to X^\perp \to \End(V) \to \Hom_D(X,V) \to 0,
\]
and that $\Hom_D(X,V) \cong V^{\dim(X)}$ is a semisimple left $\End(V)$-module
of finite length.

Because the discrete abelian group $V$ is Hausdorff and complete, so are 
$V^V$ and the closed subspace $\End(V)$. Thus we see that \emph{the topological ring 
$\End(V)$ is left linearly topologized, Hausdorff and complete.}

Throughout this paper, whenever we refer to $\End(V)$ as a topological ring or algebra,
we will always consider it to be equipped with the finite topology. Also, if the underlying
division ring $D = \K$ is a field, then we will consider $V$ as a left $\K$-vector space as in
classical linear algebra.

\begin{remark}
If $R$ is any topological ring and $S$ is a subring of $R$, then its closure $\overline{S}$
in $R$ is readily verified to be a subring of $R$ as well.  For instance, to check
that $\overline{S}$ is closed under multiplication, let $m \colon R \times R \to R$
be the multiplication map. Since $S$ is a subring we have $S \times S \subseteq m^{-1}(S)$, 
and by continuity of $m$ we find that $\overline{S} \times \overline{S} = \overline{S \times S}
\subseteq m^{-1}(\overline{S})$, proving that $\overline{S}$ is closed under multiplication.
\end{remark}

\subsection{Commutative closed subalgebras}

The following lemma shows that in the topological algebra $\End(V)$, commutativity 
behaves well under closure. 
It is surely known in other settings (for part~(1), see~\cite[Theorem~4.4]{Warner}), but we 
include a brief argument for completeness. 
For a subset $X$ of a ring $R$, its  \emph{commutant} (often called the \emph{centralizer}) 
is the subring
\[
X' = \{r \in R \mid rx = xr \mbox{ for all } x \in X\}.
\]

\begin{lemma}
\label{lem:commutativeclosure}
Let $R$ be a Hausdorff topological ring.
\begin{enumerate}
\item The closure of any commutative subring of $R$ is commutative.
\item The commutant of any subset of $R$ is closed in $R$.
\item Any maximal commutative subring of $R$ is closed.
\end{enumerate}
\end{lemma}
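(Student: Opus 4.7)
The plan is to prove the three statements in the reverse order of logical dependency: first (2), then deduce (1) from (2), and finally (3) from (1). The key observation is that part (2) is really the heart of the lemma, and it rests on only two facts about $R$: the continuity of ring operations and the fact that $\{0\}$ is closed (since $R$ is Hausdorff).

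For part (2), I would fix a subset $X \subseteq R$ and, for each $x \in X$, define the function $f_x \colon R \to R$ by $f_x(r) = rx - xr$. Since addition, negation, and left/right multiplication by a fixed element are all continuous in a topological ring, $f_x$ is continuous. Hausdorffness makes $\{0\}$ closed, so $f_x^{-1}(\{0\})$ is closed for each $x$. Then
\[
X' = \bigcap_{x \in X} f_x^{-1}(\{0\})
\]
is an intersection of closed sets and hence closed.

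For part (1), let $S$ be a commutative subring. Commutativity of $S$ exactly says $S \subseteq S'$. By part (2), $S'$ is closed, so $\overline{S} \subseteq S'$; that is, every element of $\overline{S}$ commutes with every element of $S$, which is the statement $S \subseteq (\overline{S})'$. Applying part (2) once more, $(\overline{S})'$ is closed, so $\overline{S} \subseteq (\overline{S})'$, meaning $\overline{S}$ is commutative. For part (3), if $C$ is a maximal commutative subring, then $\overline{C}$ is a commutative subring containing $C$ by part (1), and maximality forces $C = \overline{C}$.

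There is no serious obstacle here; the only delicate point is remembering to use \emph{both} sides of the inclusion $S \subseteq S'$ (i.e., invoking part (2) twice in the proof of (1)), rather than trying to show directly that the product map sends $\overline{S} \times \overline{S}$ into itself symmetrically — a direct continuity argument would need to exploit a commutativity relation that is only available on the dense subset $S \times S$, and closedness of the commutant is precisely the tool that lets us transfer it to the closure.
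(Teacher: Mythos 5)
Your proof is correct. Part~(2) and part~(3) match the paper essentially verbatim; part~(1) takes a genuinely different route.

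The paper proves part~(1) directly: with $f(x,y) = xy - yx$ continuous and $f^{-1}(0)$ closed in $R \times R$, commutativity of $C$ gives $C \times C \subseteq f^{-1}(0)$, and then $\overline{C} \times \overline{C} = \overline{C \times C} \subseteq f^{-1}(0)$ by the standard product-topology identity. You instead bootstrap from part~(2) by a two-step commutant argument: $S \subseteq S'$ and $S'$ closed give $\overline{S} \subseteq S'$; this is equivalent to $S \subseteq (\overline{S})'$; and since $(\overline{S})'$ is closed, $\overline{S} \subseteq (\overline{S})'$. Both arguments are valid and about the same length. Your version replaces the product-topology fact $\overline{C \times C} = \overline{C} \times \overline{C}$ with two applications of the closedness of centralizers, handling one variable at a time — this is the standard topological-group trick (the centralizer of any set is closed, so a dense abelian subgroup has abelian closure). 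One small point: your closing remark suggests the direct approach ``would need to exploit a commutativity relation that is only available on the dense subset'' and implies some delicacy there, but the paper's direct argument is in fact immediate and requires no special care once one recalls that closure commutes with finite products. Your argument is a fine alternative, not a repair of a gap.
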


\begin{proof}
Define a function $f \colon R \times R \to R$ by $f(x,y) = xy - yx$. Because $R$ is 
a topological ring, $f$ is continuous (where $R \times R$ is given the product topology).  
By the Hausdorff property, the singleton $\{0\}$ is closed in $R$. It follows that $f^{-1}(0)$ 
is closed in $R \times R$.

(1) Suppose that $C \subseteq R$ is a commutative subring. Then $C \times C \subseteq R \times R$
is contained in the closed subset $f^{-1}(0)$. It follows that $\overline{C} \times \overline{C} = 
\overline{C \times C} \subseteq f^{-1}(0)$, whence $\overline{C}$ is commutative.

(2) Let $X \subseteq R$. For each $x \in X$, define $g_x \colon R \to R$ by $g_x(y) = f(x,y)$.
Because $f$ is continuous, so is each $g_x$.
So $X' = \bigcap_{x \in X} g_x^{-1}(0)$ is an intersection of closed sets and is therefore
closed in $R$.

(3) If $C \subseteq R$ is a maximal commutative subring, then $C = \overline{C}$ follows
from either~(1) (by maximality) or from~(2) (as maximality implies $C = C'$).
\end{proof}

If $H$ is a Hilbert space and $B(H)$ is the algebra of bounded operators on $H$, then there is a 
well-known characterization of maximal abelian $*$-subalgebras of $B(H)$ in measure-theoretic
terms~\cite[Theorem~1]{Se}, which can be used to describe their structure quite explicitly (for 
instance, see the proof of~\cite[Lemma~6.7]{He}).
Lest one hope that maximal commutative subalgebras of $\End(V)$ should admit such a simple 
description, Proposition~\ref{prop:discreteclosed} below shows that a full-blown classification of 
such subalgebras is a wild problem.

For the next few results, we restrict to vector spaces $V$ over a field $\K$. Suppose that $A$ is
a subalgebra of $\End(V)$. Then the commutant $A' \subseteq \End(V)$ is the set of those operators
whose action commutes with that of every element of $A$. That is to say, $A'$ is the endomorphism
ring (acting from the left) of $V$ considered as a left $A$-module; in symbols, one might write
$A' = \End_A(V) \subseteq \End_\K(V)$.

\begin{proposition}
\label{prop:discreteclosed}
Let $V$ be a $\K$-vector space and $A$ be a $\K$-algebra with $\dim(A) = \dim(V)$. Then $\End(V)$
contains a closed subalgebra (even a commutant) that is topologically isomorphic to $A$ with the
discrete topology. If $A$ is commutative, then this subalgebra may furthermore be chosen to be a
maximal commutative subalgebra.
\end{proposition}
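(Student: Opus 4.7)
The plan is to realize $A$ inside $\End(V)$ via the left regular representation. First I would fix a $\K$-linear isomorphism $V \cong A$ (available since $\dim V = \dim A$), so that $\End(V)$ may be identified with $\End_\K(A)$. I would then consider the injective $\K$-algebra homomorphism $L \colon A \to \End_\K(A)$ given by $L_a(b) = ab$, and the companion antihomomorphism $R \colon A \to \End_\K(A)$ with $R_b(c) = cb$. Injectivity of $L$ is immediate from $L_a(1) = a$.

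Next I would show that $L(A)$ is closed (in fact, a commutant) by verifying the standard identification $L(A) = R(A)'$. Indeed, $T \in R(A)'$ means $T(ab) = T(a)b$ for all $a,b \in A$, so $T = L_{T(1)}$; conversely every $L_a$ commutes with every $R_b$ by associativity. Since any commutant in a Hausdorff topological ring is closed (Lemma~\ref{lem:commutativeclosure}(2)), this proves that $L(A)$ is a closed subalgebra realized as a commutant.

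I would then check that the subspace topology inherited by $L(A)$ from $\End(V)$ is discrete, which will show that $L \colon A \to L(A)$ is a topological isomorphism when $A$ is given the discrete topology. Taking the finite-dimensional subspace $X = \K \cdot 1 \subseteq A$, the basic neighborhood $X^\perp$ of $0$ satisfies
\[
X^\perp \cap L(A) = \{L_a \mid a \cdot 1 = 0\} = \{0\},
\]
so $\{0\}$ is open in $L(A)$, and by translation every singleton is open.

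Finally, when $A$ is commutative, $L_a = R_a$ for every $a$, so $L(A) = R(A)$ and hence $L(A)' = R(A)' = L(A)$. Thus $L(A)$ equals its own commutant, which is equivalent to being a maximal commutative subalgebra (it is commutative since $L(A) \subseteq L(A)'$, and any commutative subalgebra containing it must lie in $L(A)' = L(A)$). No step is really an obstacle; the only place demanding a little care is the choice of the neighborhood $\{1\}^\perp$ certifying discreteness of the subspace topology, which is precisely where the requirement that $A$ have a unit (so that $1 \in A$) plays its role.
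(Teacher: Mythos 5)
Your proof is correct and follows essentially the same route as the paper: identify $\End(V)$ with $\End_\K(A)$, use the left and right regular representations, show that $L(A)=R(A)'$ is a commutant (hence closed by Lemma~\ref{lem:commutativeclosure}(2)), establish discreteness via the neighborhood $\{1\}^\perp$, and observe that commutativity gives $L(A)=L(A)'$. The only cosmetic difference is that you prove the identity $L(A)=R(A)'$ directly, whereas the paper cites it as a standard fact.
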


\begin{proof}
If $\phi:A\rightarrow V$ is a vector space isomorphism, we can identify $\End(A)=\End_\K(A)$ with 
$\End_\K(V)$, and we may prove everything in $\End(A)$. The left regular representation 
$\lambda:A\rightarrow \End(A)$, where $\lambda(a) \colon x\mapsto ax$, and the right regular 
representation $\rho:A\rightarrow \End(A)$, where $\rho(a) \colon x\mapsto xa$, respectively 
define a homomorphism and an anti-homomorphism of algebras. Certainly $\rho$ and $\lambda$ 
are injective because $\rho(a)$ and $\lambda(a)$ both send $1 \mapsto a$; furthermore, the 
intersection of the open set $\{T \in \End(A) \mid T(1)=a\}$ with either $\lambda(A)$ or $\rho(A)$ 
is the singleton $\{a\}$. Therefore, the induced topologies from $\End(A)$ on the algebras 
$\lambda(A)$ and $\rho(A)$ are both discrete, and the maps $\lambda$ and $\rho$ are continuous.

By the comments preceding this proposition, the commutants $\lambda(A)'$ and $\rho(A)'$ are 
respectively the endomorphism rings of $A$ considered as a left and right module over itself. 
It is well known that these endomorphism rings are $\lambda(A)' = \rho(A)$ (that is, left 
$A$-module endomorphisms of $A$ are right-multiplication operators) and $\rho(A)' = \lambda(A)$; 
for instance, see~\cite[Example~1.12]{FC}.  Thus $\lambda(A)'' = \rho(A)' = \lambda(A)$, and
$\lambda(A)$ is closed in $\End(V)$ thanks to Lemma~\ref{lem:commutativeclosure}(2). Consequently,
$\lambda(A)$ a discrete closed subalgebra of $\End(A)$. 

Finally, note that in the case where $A$ is commutative we have $\lambda(a) = \rho(a)$ for all 
$a \in A$. So $\lambda(A)' = \rho(A) = \lambda(A)$, proving that the image of $A$ in $\End(V)$ is a
maximal commutative subalgebra.  
\end{proof}

The property of being a discrete subalgebra of $\End(V)$ translates nicely into 
representation-theoretic terms. First, having a subalgebra $A\subseteq \End(V)$ is equivalent 
to saying that $V$ is a faithful $A$-module. Faithfulness of $V$ is equivalent to the existence of 
a set $\{v_i \mid i \in I\} \subseteq V$ such that, for any $a \in A$, if $av_i = 0$ for all $i$ then
$a = 0$. But the existence of such a set is further equivalent to the existence of a left $A$-module
embedding $A \hookrightarrow V^I$, given by $a \mapsto (av_i)_{i \in I}$.
The next proposition shows that being a discrete subalgebra amounts the requirement that 
this embedding is into a finite power of $V$.

\begin{proposition}
\label{prop:discrete representation}
Let $A\subseteq \End(V)$ be a subalgebra. Then $A$ is discrete if and only if there is a left $A$-module
embedding of $A$ into $V^n$ for some positive integer $n$. 
\end{proposition}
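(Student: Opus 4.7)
The plan is to translate both conditions into the same concrete statement about a finite set of vectors $v_1,\dots,v_n \in V$, using the explicit basic neighborhoods of $0$ in the finite topology.

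First I would recall from the discussion of the finite topology that $\End(V)$ has a neighborhood basis of $0$ given by the left ideals $X^\perp = \{T \in \End(V) \mid T|_X = 0\}$ as $X$ ranges over finite-dimensional subspaces of $V$. Since the topology on $A$ is induced, $A$ is discrete if and only if $\{0\}$ is open in $A$, which happens if and only if there exists a finite-dimensional $X \subseteq V$ with $X^\perp \cap A = \{0\}$. Choosing a basis $v_1,\dots,v_n$ of such an $X$, the condition $X^\perp \cap A = \{0\}$ becomes: for $a \in A$, if $a v_i = 0$ for all $i = 1,\dots,n$, then $a = 0$.

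For the forward direction, with such $v_1,\dots,v_n$ fixed I would define
\[
\varphi \colon A \to V^n, \qquad \varphi(a) = (a v_1, \dots, a v_n).
\]
This is clearly a left $A$-module homomorphism, and the displayed condition above is precisely its injectivity, giving the desired embedding.

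For the converse, given a left $A$-module embedding $\varphi \colon A \hookrightarrow V^n$, I would set $v_i = \pi_i(\varphi(1)) \in V$, where $\pi_i$ is the $i$-th projection. Then $A$-linearity gives $\varphi(a) = a \cdot \varphi(1) = (a v_1, \dots, a v_n)$ for every $a \in A$, so injectivity of $\varphi$ says exactly that $a v_i = 0$ for all $i$ implies $a = 0$. Letting $X = \Span(v_1,\dots,v_n)$, this reads $X^\perp \cap A = \{0\}$, so $\{0\}$ is open in $A$ and $A$ is discrete. There is no real obstacle here; the statement is essentially a reformulation of the definition of the finite topology, and the only mild point worth being explicit about is that every $A$-module map out of $A$ is determined by the image of $1$, which is what forces the embedding to come from a finite tuple of vectors in $V$.
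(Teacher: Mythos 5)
Your proof is correct and follows essentially the same approach as the paper's: translate discreteness into the existence of a finite-dimensional $X$ with $X^\perp \cap A = \{0\}$, define $\varphi(a) = (av_1,\dots,av_n)$ for the forward direction, and read off the $v_i$ from $\varphi(1)$ for the converse.
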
 
\begin{proof}
The subalgebra $A$ is discrete if and only if $\{0\}$ is open in $A$, which is further equivalent 
to  $\{0\} = A\cap \{T \in \End(V) \mid T(W)=0\}$, for some finite-dimensional 
subspace $W$ of $V$.  Thus if $A$ is discrete then there are $w_1,\dots,w_n\in V$ such that 
$a w_i=0$ ($i=1,\dots,n$) implies $a=0$ for all $a \in A$, so that that the map $\varphi \colon
A \to V^n$ given by $\varphi(a)=(aw_i)_{i}\in V^n$ is an injective morphism of $A$-modules.  
Conversely, assume that there is an injective $A$-module homomorphism $\phi \colon A \to V^n$,
and denote $\phi(1) = (w_1, \dots, w_n)$. Then $aw_i = 0$ implies $a = 0$ for all $a \in A$
thanks to injectivity of $\phi$, which translates as above to $A$ being a discrete subalgebra of
$\End(V)$.
\end{proof}

\begin{corollary}\label{cor:discrete closed}
Let $V$ be an infinite-dimensional $\K$-vector space and $A$ a $\K$-algebra. Then $\End(V)$ contains a closed subalgebra that is topologically isomorphic to $A$ with the discrete topology if and only if $\dim(A) \leq \dim(V)$.
\end{corollary}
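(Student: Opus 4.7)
The forward direction ($\Rightarrow$) is immediate from Proposition~\ref{prop:discrete representation}: if $A$ embeds as a discrete subalgebra of $\End(V)$, then there is a left $A$-module injection $A \hookrightarrow V^n$ for some positive integer $n$, forcing $\dim_\K(A) \leq n \cdot \dim_\K(V) = \dim_\K(V)$ since $V$ is infinite-dimensional.

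For the converse ($\Leftarrow$), the plan is to imitate the proof of Proposition~\ref{prop:discreteclosed} using the left regular action of $A$ on a free left $A$-module of the appropriate $\K$-dimension. Fix a set $I$ with $|I| = \dim_\K(V)$ and set $W = A^{(I)}$. Because $\dim_\K(V)$ is infinite and $1 \leq \dim_\K(A) \leq \dim_\K(V)$, one has $\dim_\K(W) = \dim_\K(A) \cdot |I| = \dim_\K(V)$, so $W \cong V$ as $\K$-vector spaces, and it suffices to produce the desired closed discrete subalgebra inside $\End_\K(W)$. Let $\lambda : A \to \End_\K(W)$ denote the left regular representation, $\lambda(a)(x_i)_{i \in I} = (a x_i)_{i \in I}$. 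This is an injective $\K$-algebra homomorphism, and $A$ embeds into $W$ as a left $A$-module via any coordinate inclusion; Proposition~\ref{prop:discrete representation} then yields that $\lambda(A)$ inherits the discrete topology from $\End_\K(W)$. Consequently, the algebra isomorphism $A \to \lambda(A)$ is automatically a homeomorphism when $A$ is given the discrete topology.

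The main obstacle is to prove that $\lambda(A)$ is closed in $\End_\K(W)$. For this, the plan is to verify that $\lambda(A)$ coincides with the commutant $\End_A(W)'$ inside $\End_\K(W)$, whence the conclusion follows from Lemma~\ref{lem:commutativeclosure}(2). One inclusion is immediate. For the reverse, let $T \in \End_\K(W)$ commute with every left $A$-linear endomorphism of $W$. The ring $\End_A(W)$ contains the idempotent projections onto each summand $A e_i$, so $T$ preserves each $A e_i \cong A$; it also contains every right-multiplication operator $\rho(b) : (x_i) \mapsto (x_i b)$, so $T|_{A e_i}$ commutes with right multiplication on $A$ and must therefore be left multiplication by some $a_i \in A$; and it contains the ``matrix unit'' $E_{ij}$ sending $e_j \mapsto e_i$ and killing the remaining basis vectors, so commutation with $E_{ij}$ forces $a_i = a_j$ for all $i,j \in I$. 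Hence the $a_i$ collapse to a single element $a \in A$, and $T = \lambda(a)$, completing the identification.
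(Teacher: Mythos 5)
Your proof is correct, and both directions match the paper's on the forward implication, but your converse takes a genuinely different route. The paper's proof is a short reduction to Proposition~\ref{prop:discreteclosed} via a tensor trick: it picks any $\K$-algebra $B$ with $\dim(B)=\dim(V)$, notes $\dim(A\otimes B)=\dim(B)=\dim(V)$, applies Proposition~\ref{prop:discreteclosed} to realize $A\otimes B$ as a closed discrete subalgebra of $\End(V)$, and then observes that $A\otimes 1$ is a subalgebra of this discrete algebra (hence itself closed and discrete, since every subset of a discrete closed set is closed). Your argument instead \emph{generalizes} the proof of Proposition~\ref{prop:discreteclosed} directly, replacing the left regular module ${}_AA$ by a free module $W=A^{(I)}$ with $|I|=\dim(V)$ and then identifying $\lambda(A)$ with the commutant of $\End_A(W)$, i.e.\ with its own double commutant. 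This requires a new (though routine) computation: you check that any $T$ commuting with the coordinate projections, the right multiplications $\rho(b)$, and the matrix units $E_{ij}$ must act on each summand $Ae_i$ as a fixed left multiplication $\lambda(a)$. That computation is correct, including in the case $|I|$ uncountable, since the projections, right multiplications, and matrix units are all legitimate left $A$-linear endomorphisms of $W$ regardless of the size of $I$. In exchange for the extra work, your approach is arguably more self-contained and transparent about \emph{why} the image is closed (it is literally a commutant), whereas the paper's tensor trick is shorter but leans on the black box of Proposition~\ref{prop:discreteclosure} together with the slightly clever choice of auxiliary algebra $B$.
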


\begin{proof}
Suppose that $\dim(A) \leq \dim(V)$, and let $B$ be any $\K$-algebra with $\dim(B) = \dim(V)$. Then $\dim(A \otimes B) = \dim(A) \dim(B) = \dim(B)$, since $\dim(B) = \dim(V)$ is infinite. So, by Proposition~\ref{prop:discreteclosed}, $\End(V)$ has closed subalgebra that is topologically isomorphic to $A \otimes B$ with the discrete topology, which in turn contains the closed discrete subalgebra $A \otimes 1 \cong A$.

Converely, if $A \subseteq \End(V)$ is a discrete subalgebra, then $A\hookrightarrow V^n$ with $n$
finite by Proposition~\ref{prop:discrete representation}. This shows that $\dim(A)\leq n\cdot \dim(V)
= \dim(V)$ since $\dim(V)$ is infinite.
\end{proof}

We use the above to illustrate a few examples of discrete maximal commutative subalgebras
of $\End(V)$ generated by ``shift operators'' when $V$ has countably infinite dimension.

\begin{example}\label{ex:right shift}
Let $A = \K[x]$ be the polynomial algebra in a single indeterminate. Let $V$ be a $\K$-vector 
space with basis $\{v_0, v_1, v_2, \dots\}$. Then Proposition~\ref{prop:discreteclosed} implies
that $\End(V)$ has a discrete maximal commutative subalgebra isomorphic to $A$.
Under the $\K$-linear isomorphism $A \cong V$ that sends $x^i \mapsto v_i$, the
embedding $A \to \End(V)$ of Proposition~\ref{prop:discreteclosed} sends $x$ to the 
right-shift operator $S \in \End(V)$, with
\[
S(v_i) = v_{i+1} \quad \mbox{for all } i \geq 0.
\]
\end{example}

\begin{example}
Let $A = \K[x,x^{-1}]$ be the Laurent polynomial ring over $\K$. If $V$ is a $\K$-vector
space with basis $\{v_i \mid i \in \Z\}$, then the vector space isomorphism $A \cong V$
sending $x^i \mapsto v_i$ gives an embedding $A \to \End(V)$ of $A$ onto a 
discrete maximal commutative subalgebra of $\End(V)$. One can check that the embedding
provided in the proof of Proposition~\ref{prop:discreteclosed} sends $x \in A$ to the invertible ``infinite shift'' 
operator $T \in \End(V)$, with
\[
T(v_i) = v_{i+1} \quad \mbox{for all } i \in \Z.
\]
\end{example}

\subsection{Summability of idempotents}

Another topic that will play a role in diagonalizability of subalgebras is the ability to form
the ``sum'' of an infinite set of orthogonal idempotents. 

\begin{definition}
\cite[Section 10]{Warner}
Let $G$ be a Hausdorff topological abelian group, and let $\{g_i \mid i \in I\} \subseteq G$.
If the net of finite sums of the $g_i$, indexed by finite subsets of $I$, converges to a limit in $G$, 
then we write this limit as $\sum g_i$, and we say for brevity that \emph{$\sum g_i$ exists (in $G$)} 
or that the set $\{g_i\}$ is \emph{summable}.
\end{definition}

Perhaps the most basic nontrivial example of a summable set of elements is as follows:
let $I$ be a set, and consider the product space $\K^I$ as a topological $\K$-algebra. 
Let $e_i \in \K^I$ denote the idempotent whose $i$th coordinate is $1$ and whose other
coordinates are $0$. Then for any $\lambda_i \in \K$, the set $\{\lambda_i e_i\}$ is
summable; in fact, it is clear that each element $x \in \K^I$ has a unique 
expression of the form $x = \sum \lambda_i e_i$.

Recall that there are three preorderings for idempotents in a ring $R$: given idempotents
$e,f \in R$, one defines
\begin{alignat*}{2}
e \leql f &\iff ef = e & &\iff Re \subseteq Rf \\
e \leqr f &\iff fe = e & &\iff eR \subseteq fR \\
e \leq f &\iff ef = e =fe & &\iff eRe \subseteq fRf.
\end{alignat*}
While the last of these is a partial ordering, the first two generally are not. (For instance, the
idempotents $e = \left(\begin{smallmatrix}1 & 0 \\ 0 & 0\end{smallmatrix}\right)$ and 
$f = \left( \begin{smallmatrix}1 & 0 \\ 1  & 0\end{smallmatrix}\right)$ in $\M_2(\mathbb{Q})$ satisfy 
$e \leql f \leql e$ with $e \neq f$.) 

\begin{lemma}
\label{lem:lubsums}
Let $R$ be a Hausdorff topological ring with an orthogonal set of idempotents $\{e_i \mid i \in I\}
\subseteq R$. Suppose that $\sum e_i$ exists. 
Then $\sum e_i$ is an idempotent that is the least upper bound for the set $\{e_i\}$ in the 
preorderings $\leql$ and $\leqr$ and the partial ordering $\leq$.
\end{lemma}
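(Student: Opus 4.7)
The plan is to use joint continuity of multiplication in $R$, together with the orthogonality of the $e_i$, to transfer the relevant idempotent identities from the net of finite partial sums to the limit $e := \sum e_i$. Throughout, let $s_F := \sum_{i \in F} e_i$ for finite $F \subseteq I$, and recall that the net $(s_F)$, indexed by the directed set of finite subsets of $I$, converges to $e$ by hypothesis.

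First I would verify that $e$ is idempotent. Because the $e_i$ are pairwise orthogonal, each partial sum satisfies $s_F^2 = s_F$. Since multiplication is continuous, the convergent net $(s_F, s_F) \to (e,e)$ in $R \times R$ gives $s_F^2 \to e^2$; since also $s_F^2 = s_F \to e$, uniqueness of limits in the Hausdorff space $R$ yields $e^2 = e$.

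Next I would show that $e$ is an upper bound for $\{e_i\}$ in the strongest of the three orderings, namely $\leq$. Fix $i_0 \in I$. By orthogonality, $e_{i_0} s_F = e_{i_0}$ whenever $i_0 \in F$ and $e_{i_0} s_F = 0$ otherwise; thus the net $(e_{i_0} s_F)$ is eventually constant with value $e_{i_0}$ (eventually in $F \ni i_0$), hence converges to $e_{i_0}$. On the other hand, continuity of multiplication gives $e_{i_0} s_F \to e_{i_0} e$, so $e_{i_0} e = e_{i_0}$. A symmetric argument yields $e\, e_{i_0} = e_{i_0}$, and therefore $e_{i_0} \leq e$ (which in particular implies $e_{i_0} \leq_\ell e$ and $e_{i_0} \leq_r e$).

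Finally I would establish the least-upper-bound property, treating $\leq_\ell$ first; the case $\leq_r$ is symmetric, and the case $\leq$ follows by combining the two. Suppose $f \in R$ is an idempotent with $e_i \leq_\ell f$, i.e.\ $e_i f = e_i$, for every $i \in I$. Then $s_F f = \sum_{i \in F} e_i f = \sum_{i \in F} e_i = s_F$ for every finite $F$, so passing to the limit (using continuity of right multiplication by $f$ and the fact that $s_F \to e$ on both sides) gives $e f = e$, i.e.\ $e \leq_\ell f$. The corresponding statement for $\leq_r$ follows by the same argument using left multiplication by $f$, and then for any idempotent $f$ with $e_i \leq f$ for all $i$ we have both $e \leq_\ell f$ and $e \leq_r f$, hence $e \leq f$. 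The only genuinely delicate point is the use of joint continuity of multiplication, which is why the Hausdorff hypothesis on $R$ (ensuring unique limits) is essential; beyond that the argument is a routine application of continuity.
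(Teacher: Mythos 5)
Your proof is correct and takes essentially the same approach as the paper: establish idempotency of $e$ by continuity, verify the upper-bound property via continuity of left and right multiplication by fixed $e_{i_0}$, and establish the least-upper-bound property by passing the relation $s_F f = s_F$ to the limit. The only cosmetic difference is that the paper phrases the last step as the partial sums lying in the closed set $\lann(1-f)$, whereas you invoke continuity of multiplication by $f$ together with uniqueness of limits; these are the same argument in different clothes.
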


\begin{proof}
Write $e = \sum e_i$.
Given a finite subset $J \subseteq I$, write $e_J = \sum_{i \in J} e_i$, so $e$ is the limit of the
net $(e_J)$. Because each $e_J$ lies in the zero set of the 
continuous function $R \to R$ given by $x \mapsto x-x^2$, it follows that the limit $e$ also 
lies in this set. So $e = e^2$ is idempotent. 
Further, $e_k\leq e$  for all $k \in I$ (and $e_k\leql e$, $e_k\leqr e$) because for any finite 
$J \subseteq I$ with $J\supseteq \{k\}$, $e_ke_J=e_Je_k=e_k$, and the continuity of the functions 
$x\mapsto e_kx$, $x\mapsto xe_k$ shows that the relation holds in the limit as well: $e_ke=ee_k=e$.
Thus $e$ is an upper bound for the idempotents $e_i$ with respect to $\leql$ and $\leqr$ (and
consequently, $\leq$).

To see that $e$ is the least upper bound of the $e_i$, let $f \in R$ be idempotent, and suppose 
that $e_i \leql f$ for all $i$. Then for all finite
$J \subseteq I$, we have $e_J f = \sum_{j \in J} e_j f = \sum_{j \in J} e_j = e_J$. Thus all $e_J$
lie in the closed subset $\lann(1-f) \subseteq R$, so we also have $e = \lim e_J \in \lann(1-f)$,
which translates to $e \leql f$. Similarly, if $f = f^2 \in R$ satisfies $e_i \leqr f$ (respectively,
$e_i \leq f$) for all $i$, then $e \leqr f$ (respectively, $e \leq f$). So $e$ is a least
upper bound for the $e_i$ in all of these (pre)orderings. 
\end{proof}

Suppose that $\{E_i \mid i \in I\}$ is an orthogonal set of idempotents in $\End(V)$. Notice
that the sum of subspaces $\sum \range(E_i) \subseteq V$ is direct. Indeed, if $\sum r_i = 0$
in $V$ where $r_i \in \range(E_i)$ are almost all zero, then for every $j \in I$ we have 
$0 = E_j(\sum r_i) = E_j(\sum E_i(r_i)) = E_j(r_j) = r_j$ as desired. Similarly, it is straightforward
to show that the following sum of subspaces in $V$ is direct:
\begin{equation}
\label{eq:directsum}
\left(\bigoplus \range(E_i) \right) \oplus \left(\bigcap \ker(E_i)\right) \subseteq V.
\end{equation}
Whether or not this direct sum is equal to the whole space $V$ strictly controls the existence
of $\sum E_i$ in $\End(V)$.

\begin{lemma}
\label{lem:sumconditions}
Let $R= \End(V)$, and let $\{E_i \mid i \in I\} \subseteq R$ be an orthogonal set of
idempotents. The following are equivalent:
\begin{enumerate}[label=(\alph*)]
\item $\sum E_i$ exists in $R$;
\item $V = \left( \bigoplus \range(E_i) \right) \oplus \left( \bigcap \ker(E_i) \right)$ (i.e., the
containment \eqref{eq:directsum} is an equality);
\item For every $v \in V$, the set $\{i \in I \mid v \notin \ker(E_i)\}$ is finite;
\item For every finite-dimensional subspace $W \subseteq V$, there exists a finite subset 
$J \subseteq I$ such that $(1-\sum_{j \in J} E_j)W \subseteq \bigcap \ker(E_i)$.
\end{enumerate}
When the above conditions hold, then $\{E_i \mid i \in I_0\}$ is also summable for any subset 
$I_0 \subseteq I$, and $\sum_{i \in I} T_i$ exists for any elements $T_i \in E_i R E_i$.
\end{lemma}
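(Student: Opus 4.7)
The proof plan is to establish the cycle $(a) \Rightarrow (c) \Rightarrow (b) \Rightarrow (d) \Rightarrow (a)$. The guiding observation is that the finite topology on $\End(V)$ coincides with the topology of pointwise convergence with $V$ discrete, so a net $(S_\alpha)$ in $\End(V)$ converges to $S$ exactly when, for every $v \in V$, the net $(S_\alpha(v))$ is eventually equal to $S(v)$. In particular, writing $E_J := \sum_{i \in J} E_i$ for finite $J \subseteq I$, the sum $\sum E_i$ exists if and only if at each $v \in V$ the net $(E_J(v))$ is eventually constant.

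For $(a) \Rightarrow (c)$: if $E = \sum E_i$ and $v \in V$, choose a finite $J_0$ with $E_J(v) = E(v)$ for every finite $J \supseteq J_0$; comparing $J_0$ with $J_0 \cup \{k\}$ for any $k \notin J_0$ yields $E_k(v) = 0$, so $\{i : v \notin \ker(E_i)\} \subseteq J_0$ is finite. For $(c) \Rightarrow (b)$: define $E(v) := \sum_i E_i(v)$, a finite sum by (c) and linear in $v$; clearly $E(v) \in \bigoplus \range(E_i)$, while orthogonality and idempotence give $E_j(v - E(v)) = E_j(v) - \sum_i E_j E_i(v) = 0$ for every $j$, placing $v - E(v) \in \bigcap \ker(E_i)$; directness of the sum was noted just before the lemma statement. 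For $(b) \Rightarrow (d)$: picking a basis $w_1, \dots, w_n$ of $W$ and decomposing each $w_k = \sum_{i \in F_k} r_i^{(k)} + s^{(k)}$ per (b) with $r_i^{(k)} \in \range(E_i)$, $s^{(k)} \in \bigcap \ker(E_i)$, and $F_k$ finite, the choice $J := \bigcup_k F_k$ together with the identities $E_j|_{\range(E_j)} = \id$ and $E_j E_i = 0$ for $i \neq j$ gives $E_J(w_k) = \sum_i r_i^{(k)}$, so $(1 - E_J) w_k = s^{(k)} \in \bigcap \ker(E_i)$, and linearity extends this to all of $W$. For $(d) \Rightarrow (a)$: applying (d) to $W = \K v$ yields a finite $J_0$ with $(1 - E_{J_0})v \in \bigcap \ker(E_i)$; for any finite $J \supseteq J_0$, decomposing $v = E_{J_0}(v) + (1 - E_{J_0})(v)$ and applying $E_{J \setminus J_0}$ kills both terms (by orthogonality on the first, by the annihilation hypothesis on the second), so $E_J(v) = E_{J_0}(v)$ is eventually constant; the pointwise limit $E$ is automatically linear, hence lies in $\End(V)$ and equals the required $\sum E_i$.

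The concluding assertions are essentially corollaries: condition (c) is inherited by any subset $I_0 \subseteq I$, so $\{E_i\}_{i \in I_0}$ remains summable. For $T_i \in E_i R E_i$, the identity $T_i = E_i T_i E_i$ forces $T_i(v) = 0$ whenever $E_i(v) = 0$, so $\{i : T_i(v) \neq 0\} \subseteq \{i : E_i(v) \neq 0\}$ is finite for every $v$, which immediately implies that the partial sums $\sum_{i \in J} T_i$ are eventually constant at each $v$ and hence converge in the finite topology to a linear operator $\sum_i T_i \in \End(V)$. The main obstacle is really just bookkeeping: every step reduces to pointwise statements about linear maps on the discrete space $V$, with the only genuine care needed in the steps $(b) \Rightarrow (d)$ and $(d) \Rightarrow (a)$, where one must carefully exploit orthogonality and idempotence in expanding $E_J$ on the various summands of the decomposition.
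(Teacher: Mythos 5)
Your proof is correct, and it is close in spirit to the paper's, with a different arrangement of the cycle. The paper proves $(a)\Rightarrow(d)\Rightarrow(c)\Rightarrow(b)\Rightarrow(a)$, obtaining $(a)\Rightarrow(d)$ by invoking the preceding lemma on summable orthogonal idempotents (specifically $E_i \leq E$, so $E_i(1-E)=0$), and it proves $(b)\Rightarrow(a)$ by first establishing the general $T_i$ claim and specializing to $T_i = E_i$. You instead run $(a)\Rightarrow(c)\Rightarrow(b)\Rightarrow(d)\Rightarrow(a)$, which has two modest advantages: the argument is self-contained (you never need the least-upper-bound lemma for $(a)\Rightarrow(c)$, only the elementary observation that inserting an extra index into an eventually constant partial sum at $v$ forces $E_k(v)=0$), and the four steps all reduce transparently to pointwise reasoning on the discrete space $V$. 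You also derive the concluding $T_i$ summability directly from condition~(c) via the containment $\{i : T_i(v)\neq 0\} \subseteq \{i : E_i(v)\neq 0\}$, whereas the paper builds the limit $T$ explicitly from the direct-sum decomposition in~(b); both are valid, and yours is arguably shorter, at the cost of not giving the explicit description of $\sum T_i$ as the operator agreeing with $T_i$ on $\range(E_i)$ and vanishing on $\bigcap\ker(E_i)$.
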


\begin{proof}
First assume that~(a) holds; we will verify~(d). Let $W \subseteq V$ be any finite-dimensional 
subspace. Denote $E = \sum E_i$, the limit of the finite sums of the $E_i$. Because
$\{T \in \End(V) \mid T|_W = E|_W\}$ is an open neighborhood of $E$ in $\End(V)$, there exists
a finite subset $J \subseteq I$ such that $\sum_{j \in J} E_j$ lies in this set. Then for every $i \in I$,
since $E_i \leq E$ gives $E_i(1-E) = 0$, we conclude that $E_i(1-\sum_{j \in J}E_j)W = E_i(1-E)W = 0$.

Now assume~(d) holds. To prove~(c), let $v \in V$ and consider the subspace $W = \Span(v)$ of $V$. 
Then there exists a finite subset $J \subseteq I$ such that, for $E_J = \sum_{j \in J} E_j$ we
have  $(1- E_J)W \subseteq \bigcap \ker(E_i)$. Thus for all $i \in I$, $E_i(1-E_J)(v) = 0$, from which it
follows that
\[
E_i(v) = E_i \sum_{j \in J} E_j(v)  = \sum_{j \in J} E_i E_j(v).
\]
The last sum is $0$ whenever $i \notin J$, and so $\{i \in I \mid v \notin \ker(E_i)\} \subseteq J$ is finite, as desired.

Next suppose that~(c) holds; we will verify~(b). Let $v \in V$, so that the set
$S = \{i \in I \mid  v \notin \ker(e_i)\}$ is finite by hypothesis. Then $E_i v = 0 $ for all 
$i \notin S$. With this, one can readily verify that $v = (\sum_{i \in S} E_i)v + (1 - \sum_{i \in S}E_i)v$ 
is a sum of elements in $\bigoplus \range(E_i)$ and $\bigcap \ker(E_i)$, respectively.
So equality indeed holds in~\eqref{eq:directsum}.

Now assume that~(b) holds, and let $T_i \in E_i R E_i$ for all $i$. We will prove that $\sum T_i$
exists in $R$; this will imply that~(a) holds (the particular case where all $T_i = E_i$).
Define $T \in R$ to be the linear operator on $V$ with kernel $L =  \bigcap \ker(E_i)$ whose 
restriction to each $U_i = E_i(V)$ agrees with $T_i$. A basic open neighborhood of $T$ in $R$ has the 
form $\{S \in R \mid S|_X = T|_X\}$ for some finite set $X \subseteq V$. There is a finite set $J \subseteq I$ 
such that  $X \subseteq  \left(\bigoplus_{j \in J} U_j\right) \oplus L$. It follows that for any finite 
subset $J' \subseteq I$  with $J' \supseteq J$, the operators $T$ and $\sum_{i \in J'} T_i$ 
have the same restrictions to $\left(\bigoplus_{j \in J} U_j\right) \oplus L \supseteq X$. Thus the net 
of finite sums of the $T_i$ converges to $T$.

Finally, to see that $\sum_{i \in I_0} E_i$ exists for any $I_0 \subseteq I$, notice that
condition~(c) still holds when $I$ is replaced with $I_0$.
\end{proof}

In the proof above, the idempotent sum $\sum E_i$ is explicitly described as the projection of 
$V$ onto the subspace $\bigoplus \range(E_i)$ with kernel equal to $\bigcap \ker(E_i)$.

\begin{corollary}
\label{cor:sumconditions}
For an orthogonal set of idempotents $\{E_i \mid i \in I\} \subseteq \End(V)$, the following
are equivalent:
\begin{enumerate}[label=(\alph*)]
\item $\{E_i\}$ is summable and $\sum E_i = 1 \in \End(V)$;
\item $\bigoplus \range(E_i) = V$;
\item For every finite-dimensional subspace $W \subseteq V$, there exists a finite subset 
$J \subseteq I$ such that $\sum_{j \in J} E_j$ restricts to the identity on $W$.
\end{enumerate}
\end{corollary}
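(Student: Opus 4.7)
The plan is to reduce each implication in this corollary to a corresponding statement in Lemma~\ref{lem:sumconditions}, together with the explicit description of $\sum E_i$ (noted immediately after the proof of that lemma) as the projection of $V$ onto $\bigoplus \range(E_i)$ along $\bigcap \ker(E_i)$. No new topological work should be required; the entire task is to track the additional hypothesis $\sum E_i = 1$ through the four equivalent conditions of the lemma.

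First I would address (a)$\Rightarrow$(b). Assume $\sum E_i$ exists and equals $1$. By the lemma, condition~(b) of Lemma~\ref{lem:sumconditions} yields the direct sum decomposition $V = \bigoplus \range(E_i) \oplus \bigcap \ker(E_i)$. Since $\sum E_i$ agrees with the projection onto the first summand having the second as kernel, and this projection is the identity operator, the kernel must be zero; that is, $\bigcap \ker(E_i) = 0$, and hence $V = \bigoplus \range(E_i)$. Conversely, for (b)$\Rightarrow$(a), if $V = \bigoplus \range(E_i)$, then automatically $\bigcap \ker(E_i) = 0$ (it lies inside each $\ker(E_i)$, but an element of $\bigoplus \range(E_i)$ in $\ker(E_j)$ for every $j$ must be zero), so condition~(b) of the lemma holds; thus $\sum E_i$ exists, and its explicit description forces it to be the projection onto all of $V$ with trivial kernel, i.e., the identity.

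Next, I would handle the equivalence of (c) with (a)/(b). For (a)$\Rightarrow$(c), apply condition~(d) of Lemma~\ref{lem:sumconditions}: given a finite-dimensional $W \subseteq V$, there is a finite $J \subseteq I$ with $(1 - \sum_{j\in J} E_j)W \subseteq \bigcap \ker(E_i)$; but we have just seen $\bigcap \ker(E_i) = 0$ under hypothesis~(a), so $\sum_{j \in J} E_j$ restricts to the identity on $W$. For (c)$\Rightarrow$(b), given any $v \in V$, apply (c) with $W = \Span(v)$ to produce a finite $J$ with $v = \sum_{j \in J} E_j(v) \in \sum_{j \in J} \range(E_j) \subseteq \bigoplus \range(E_i)$, so $V = \bigoplus \range(E_i)$.

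The argument should be short and mechanical; there is no genuine obstacle since all the analytic content — why the net of partial sums converges and what its limit is — has been absorbed into Lemma~\ref{lem:sumconditions}. If any step deserves care, it is only the passage between ``$\sum E_i$ equals $1$'' and ``$\bigcap \ker(E_i) = 0$,'' which requires invoking the lemma's projection description rather than re-deriving convergence from scratch.
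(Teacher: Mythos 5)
Your proof is correct and takes essentially the same route as the paper: the paper simply remarks that the corollary follows from translating conditions (a), (b), (d) of Lemma~\ref{lem:sumconditions} and observing that, when summable, $\sum E_i = 1$ if and only if $\bigcap \ker(E_i) = 0$, which is exactly the pivot you identify and carefully track through each implication.
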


\begin{proof}
This follows from a straightforward translation of conditions (a), (b), and (d) in
Lemma~\ref{lem:sumconditions}, realizing that if $\{E_i\}$ is summable, then 
$\sum E_i = 1$ if and only if $\bigcap \ker(E_i) = \{0\}$.
\end{proof}

The following example shows that an arbitrary set of orthogonal idempotents in $\End(V)$
need not be summable.

\begin{example}\label{ex:not summable}
Let $V$ be a $\K$-vector space with countably infinite basis $\{v_0, v_1, \dots\}$.
Consider the $\K$-algebra $A$ generated by countably many idempotents $x_1, x_2, \dots \in A$
subject to the conditions $x_i x_j = 0$ when $i \neq j$. Then $A$ has $\K$-basis given by
$\{1,x_1,x_2,\dots\}$. Under the isomorphism (identification) $\phi \colon A \to V$ given by $\phi(1) = v_0$
and $\phi(x_i) = v_i$ for $i \geq 1$, the proof of Proposition~\ref{prop:discreteclosed} provides an
injective homomorphism $\lambda \colon A \to \End(V)$ such that $\lambda(A)$ is a closed
discrete maximal commutative subalgebra of $\End(V)$. 
Denote each $\lambda(x_i) = E_i \in \End(V)$. 
(If we let $\{E_{ij} \mid i,j \geq 0\}$ denote the infinite set of matrix units of $\End(V)$ with 
respect to the basis $\{v_i \mid i \geq 0\}$, then we may explicitly describe $E_i = E_{i0} + E_{ii}$.)
One may quickly deduce from Lemma~\ref{lem:sumconditions}(c) (with $v = v_0$) that $\sum E_i$ 
does not exist in $\End(V)$. 
\end{example}

\begin{lemma}
\label{lem:sumofproducts}
Suppose that $\{E_i \mid i \in I\}$ and $\{F_j \mid j \in J\}$ are orthogonal sets of idempotents in
$\End(V)$ such that $E = \sum E_i$ and $F = \sum F_j$ exist.  Furthermore, suppose that all of 
the $E_i$  and $F_j$ commute pairwise. Then the orthogonal set of idempotents 
$\{E_i F_j \mid (i,j) \in I \times J\}$ is also summable, and $\sum_{i,j} E_i F_j = EF$.
\end{lemma}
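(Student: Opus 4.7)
The plan is to verify three things in turn: that $\{E_i F_j\}_{(i,j) \in I \times J}$ is an orthogonal set of idempotents, that this set is summable in $\End(V)$, and that its sum is $EF$. The first part follows immediately from the pairwise commutativity and orthogonality hypotheses: $(E_iF_j)^2 = E_i^2 F_j^2 = E_i F_j$, and for $(i,j) \neq (k,\ell)$ at least one of $E_iE_k = 0$ or $F_jF_\ell = 0$ holds, whence $(E_iF_j)(E_kF_\ell) = E_iE_k F_jF_\ell = 0$.

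For summability I would invoke Lemma~\ref{lem:sumconditions}(c) applied to the orthogonal set $\{E_iF_j\}$: it suffices to show that for every $v \in V$, only finitely many of the operators $E_iF_j$ fail to annihilate $v$. Summability of $\{F_j\}$ gives a finite set $\{j_1,\dots,j_m\} = \{j \in J \mid F_jv \neq 0\}$, and for each $j_k$, summability of $\{E_i\}$ applied to the vector $F_{j_k}v$ gives that $\{i \in I \mid E_i F_{j_k}v \neq 0\}$ is finite. Since $E_iF_jv \neq 0$ forces both $F_jv \neq 0$ and $E_i(F_jv) \neq 0$, the set of pairs $(i,j)$ with $E_iF_jv \neq 0$ is a finite union of finite sets. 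Hence the sum $S := \sum_{i,j} E_iF_j$ exists in $\End(V)$ by Lemma~\ref{lem:sumconditions}.

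To identify $S$ with $EF$ I would use a cofinality argument. The partial sums indexed by ``rectangles'' $A \times B$, with $A \subseteq I$ and $B \subseteq J$ finite and ordered coordinatewise by inclusion, form a cofinal subnet of the net of all finite partial sums of $\{E_iF_j\}$ (any finite subset of $I \times J$ sits inside some rectangle). For any such rectangle,
\[
\sum_{(i,j) \in A \times B} E_iF_j \;=\; \Bigl(\sum_{i \in A} E_i\Bigr)\Bigl(\sum_{j \in B} F_j\Bigr),
\]
and the two factors converge to $E$ and $F$ respectively as $A$ and $B$ grow. By joint continuity of multiplication in the topological ring $\End(V)$, the rectangular partial sums converge to $EF$. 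Since the full net converges to $S$ while this cofinal subnet converges to $EF$, the Hausdorff property of $\End(V)$ forces $S = EF$.

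The only subtle point—and hence the main obstacle—is the final subnet step: one must remember that a cofinal subnet of a convergent net has the same limit, so that identifying the limit on the rectangular subnet suffices even though general partial sums are indexed by arbitrary finite subsets of $I \times J$. Once this is in hand, no further calculation is needed beyond joint continuity of multiplication.
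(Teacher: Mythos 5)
Your proof is correct, but the route is genuinely different from the paper's. You modularize the argument into two independent pieces: first you establish summability outright by verifying condition~(c) of Lemma~\ref{lem:sumconditions} through a finite-union count (only finitely many $j$ have $F_j v \neq 0$, and for each such $j$ only finitely many $i$ have $E_i F_j v \neq 0$), and then you identify the resulting limit $S$ with $EF$ by passing to the cofinal subnet of ``rectangular'' partial sums $\sum_{A \times B} E_i F_j = \bigl(\sum_A E_i\bigr)\bigl(\sum_B F_j\bigr)$ and invoking joint continuity of multiplication together with the Hausdorff property. The paper instead does everything at once by a direct algebraic telescoping identity: fixing a finite-dimensional $W$ and finite $I_0, J_0$ with $E - \sum_{I_0} E_i$ and $F - \sum_{J_0} F_j$ vanishing on $W$, one writes
\[
EF - \sum\nolimits_{I_0\times J_0} E_iF_j = F\Bigl(E - \sum\nolimits_{I_0}E_i\Bigr) + \sum\nolimits_{I_0}E_i\Bigl(F - \sum\nolimits_{J_0}F_j\Bigr),
\]
and observes that the right side vanishes on $W$. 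Your approach buys clarity in the convergence bookkeeping --- by establishing summability separately you never have to worry about tail terms outside a rectangle --- at the cost of importing the subnet lemma (a cofinal monotone map induces a subnet with the same limit) and joint continuity as black boxes. The paper's identity-based argument is more self-contained and hands you the limit explicitly, but it leaves slightly implicit the passage from ``the rectangular sum approximates $EF$ on $W$'' to ``the full net of finite partial sums converges,'' which your version handles cleanly up front. Both are valid; yours is a nice alternative.
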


\begin{proof}
We begin by remarking that $E_i F_j = F_j E_i$ for all $i$ and $j$ implies that $E$ and $F$ commute
with each other and with each $E_i$ and $F_j$.

Fix a finite-dimensional subspace $W \subseteq V$. Then there exist finite sets $I_0 \subseteq I$
and $J_0 \subseteq J$ such that $E -\sum_{i \in I_0} E_i$ and $F - \sum_{j \in J_0} F_j$ both restrict
to zero on $W$. Then
\begin{align*}
EF - \sum\nolimits_{I_0 \times J_0} E_i F_j 
&= EF - \left(\sum\nolimits_{I_0} E_i \right)F + \left(\sum\nolimits_{I_0} E_i \right)F
  - \left(\sum\nolimits_{I_0} E_i\right) \left(\sum\nolimits_{J_0} F_j\right) \\
&= F\left(E - \sum\nolimits_{I_0} E_i \right) 
  + \sum\nolimits_{I_0} E_i \left( F - \sum\nolimits_{J_0} F_j \right).
\end{align*}
The latter expression also restricts to zero on $W$, and since $W$ was arbitrary, we conclude
that $\sum E_i F_j$ exists and is equal to $EF$.
\end{proof}

\subsection{Topological modules}

In the remainder of this section, we record some results relating to topological modules
that will be of use when considering pseudocompact modules. 
The first is a link between open submodules and discrete modules.

\begin{lemma}\label{lem:open}
Let $M$ be a topological module over a topological ring $R$, and let
$N$ be a submodule of $M$. Then the following are equivalent:
\begin{enumerate}[label=(\alph*)]
\item $N$ is open;
\item $M/N$ is discrete in the quotient topology;
\item $N$ is the kernel of a continuous homomorphism from $M$ to
a discrete topological $R$-module.
\end{enumerate}
Furthermore, any open submodule of $M$ is closed, and any submodule of 
$M$ containing an open submodule is itself open.
\end{lemma}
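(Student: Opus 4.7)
The plan is to prove the three-way equivalence via the cycle $(a)\Rightarrow(b)\Rightarrow(c)\Rightarrow(a)$, and then deduce the two supplementary claims from translation-invariance of the topology on $M$.

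For $(a)\Rightarrow(b)$, I would invoke the very definition of the quotient topology: if $\pi \colon M \to M/N$ denotes the canonical surjection, then a subset $U \subseteq M/N$ is open precisely when $\pi^{-1}(U)$ is open in $M$. Since $\pi^{-1}(\{0\}) = N$ is open by hypothesis, the singleton $\{0\} \subseteq M/N$ is open. Because $M/N$ is itself a topological $R$-module (hence a topological abelian group), translation by any element is a homeomorphism, so every singleton in $M/N$ is open; that is, $M/N$ carries the discrete topology.

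The implication $(b)\Rightarrow(c)$ is immediate once one observes that the canonical projection $\pi \colon M \to M/N$ is a continuous $R$-module homomorphism with kernel $N$, and that by hypothesis its target $M/N$ is discrete. For $(c)\Rightarrow(a)$, if $\phi \colon M \to D$ is a continuous $R$-module homomorphism into a discrete module $D$ with $\ker \phi = N$, then $\{0\}$ is open in $D$, so $N = \phi^{-1}(\{0\})$ is open in $M$ by continuity.

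For the two concluding assertions I would use that addition makes $M$ a topological abelian group, so for each $m \in M$ the translation $x \mapsto x + m$ is a self-homeomorphism of $M$. Thus if $N$ is open, every coset $m + N$ is open, and the decomposition $M \setminus N = \bigcup_{m \notin N}(m + N)$ exhibits the complement of $N$ as a union of open sets, so $N$ is closed. Similarly, for any submodule $N' \supseteq N$ the coset decomposition $N' = \bigcup_{n' \in N'}(n' + N)$ writes $N'$ as a union of opens, hence $N'$ is open. I do not foresee a genuine obstacle here; the main point worth attention is simply that every topological module is automatically a topological group, so translations are homeomorphisms and the standard topological-group argument applies verbatim.
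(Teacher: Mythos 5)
Your cycle $(a)\Rightarrow(b)\Rightarrow(c)\Rightarrow(a)$ matches the paper's proof of the three-way equivalence essentially verbatim, down to the translation-invariance argument for $(a)\Rightarrow(b)$. Where you diverge is in the two supplementary claims: the paper deduces that open submodules are closed from characterization~(c), observing that $N$ is the kernel of a continuous homomorphism to the Hausdorff (because discrete) module $M/N$, and it shows that a submodule $N$ containing an open submodule $N_0$ is open by exhibiting $N$ as the kernel of the continuous composite $M \to M/N_0 \to (M/N_0)/(N/N_0) \cong M/N$ into a discrete module. You instead use direct coset decompositions: $M \setminus N = \bigcup_{m \notin N}(m+N)$ and $N' = \bigcup_{n' \in N'}(n'+N)$, each a union of open translates. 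Both routes are correct; yours is more elementary and self-contained in that it appeals only to translation being a homeomorphism, while the paper's has the minor advantage of directly reusing the equivalence just established and thereby illustrating its utility.
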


\begin{proof}
That (b) implies (c) follows from the fact that the canonical surjection
$M \to M/N$ is continuous when $M/N$ is equipped with the quotient
topology. To see that~(c) implies~(a), simply consider a continuous module homomorphism
$f \colon M \to D$ as in~(c) and note that $N = f^{-1}(0)$ where $\{0\}$ is open
in $D$.
Finally, suppose~(a) holds and let $\pi \colon M \to M/N$ be the canonical surjection.
For any coset $x+N \in M/N$, one has $\pi^{-1}(x+N) = x+N \subseteq M$. As $N$
is open, the same is true of the translate $x+N$. It follows that each singleton of $M/N$
is open, whence it is discrete.

That any open submodule of $M$ is closed follows, for instance, from characterization~(c)
above, as the kernel of a continuous homomorphism to a Hausdorff module is closed.
For the final claim, assume that $N_0 \subseteq N \subseteq M$ are submodules
where $N_0$ is open. One may readily verify that $N$ is open as it is the kernel
of the composite $M \to M/N_0 \to (M/N_0)/(N/N_0) \cong M/N$ of surjective continuous 
homomorphisms, where the latter two modules are discrete.
\end{proof}

The next result characterizes modules that are inverse limits
of discrete modules. We first make some general remarks on inverse limits of topological
modules. (These remarks also apply to limits of general diagrams, but we restrict to inversely 
directed systems for notational simplicity as these are the only systems we require.)
Given a topological ring $R$, let ${}_R \TMod$ denote the category of left topological $R$-modules 
with continuous module homomorphisms. Given an inversely directed system $\{M_j \mid j \in J\}$ 
with connecting morphisms $\{f_{ij} \colon M_j \to M_i \mid i \leq j \mbox{ in } J\}$ in ${}_R \TMod$, 
its limit can be constructed via the usual ``product-equalizer'' method as in~\cite[V.2]{MacLane}:
\[
\varprojlim M_j = \{(m_j) \in \prod M_j \mid f_{ij}(m_j) = m_i \mbox{ for all } i \leq j \mbox{ in }J\} \subseteq \prod M_j.
\]
(The same product-equalizer construction yields both the inverse limit of the $M_j$ as 
$R$-modules and the inverse limit as topological groups.) The topology on the inverse limit
is the \emph{initial topology} with respect to the canonical morphisms 
$g_i \colon \varprojlim M_j \to M_i$ for all $i \in J$; this is the topology generated by the 
subbasis of sets of the form $g_i^{-1}(U)$ for any $i \in J$ and open $U \subseteq M_i$.
(In fact, thanks to the inverse directedness of the system $M_j$, the sets of the form
$g_i^{-1}(U)$ are closed under intersection and actually form a basis for the topology: given
$i,j \leq k$ in $J$ and open $U \subseteq M_i$ and $V \subseteq M_j$, we have
$g_i^{-1}(U) \cap g_j^{-1}(V) = g_k^{-1}(f_{ik}^{-1}(U) \cap f_{jk}^{-1}(V))$.)

Suppose that all of the $M_j$ in the inverse system above are Hausdorff. Then the product 
of any subset of the $M_j$ will also be Hausdorff. 
Since the above presentation of the limit can be viewed as the equalizer (i.e., kernel of the difference)
of two continuous maps $\prod M_j \to \prod_{f_{ij}} M_i$ (see~\cite[Theorem V.2.2]{MacLane}),
and the equalizer of morphisms between Hausdorff spaces is closed in the domain, we see that
$\varprojlim M_j$ forms a closed submodule of $\prod M_j$. (See also~\cite[III.7.2]{Bourbaki:topology}.)

Because the universal property of an (inverse) limit $\varprojlim M_j$ identifies it within
${}_R \TMod$ uniquely up to a unique isomorphism, when a topological module $L$ is
isomorphic to such a limit (i.e., satisfies the universal property) then we will write 
$L = \varprojlim M_j$ without danger of confusion.

\begin{remark}\label{rem:inverse limit}
Suppose that $\mathcal{P}$ is any property of Hausdorff topological modules that
is preserved when passing to products and closed submodules of modules satisfying $\mathcal{P}$.
Then the (inverse) limit of any system of Hausdorff modules satisfying $\mathcal{P}$ will
again have property $\mathcal{P}$ (as it is a closed submodule of a product of modules
satisfying $\mathcal{P}$ thanks to the discussion above). In particular, this holds when
$\mathcal{P}$ is taken to be either of the properties of being \emph{complete} or
\emph{linearly topologized}.
\end{remark}

We are now ready to characterize inverse limits of discrete modules.
(Note that special cases of the following were given in~\cite[Theorem~3]{Z1}
and~\cite[Corollary~5.22]{Warner}.)

\begin{lemma}\label{lem:prodiscrete}
For a left topological module $M$ over a topological ring $R$, the following are equivalent:
\begin{enumerate}[label=(\alph*)]
\item $M$ is Hausdorff, complete, and linearly topologized;
\item $M= \varprojlim M/N$ in the category ${}_R \TMod$, where $N$ ranges over any 
neighborhood basis $\U$ of $0$ consisting of open submodules of $M$ (and the connecting
homomorphisms for $N \supseteq N'$ are the canonical surjections $M/N' \twoheadrightarrow
(M/N')/(N/N') \cong M/N$);
\item $M$ is an inverse limit of discrete topological $R$-modules. 
\end{enumerate}
When $M$ satisfies the above conditions and $\U$ is any neighborhood basis
of $0$ consisting of open submodules, the topology on $M$ is induced from the product 
topology of $\prod_{N \in \U} M/N$ via the usual inclusion of the inverse limit.
\end{lemma}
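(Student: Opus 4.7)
The plan is to prove the equivalences by the cycle (b)$\Rightarrow$(c)$\Rightarrow$(a)$\Rightarrow$(b). The first implication (b)$\Rightarrow$(c) is immediate from Lemma~\ref{lem:open}, since each quotient $M/N$ is discrete when $N$ is open. For (c)$\Rightarrow$(a), I would invoke Remark~\ref{rem:inverse limit}: being Hausdorff, complete, and linearly topologized are each preserved under products and closed submodules, and each property holds trivially for discrete modules (with $\{0\}$ serving as an open submodule witnessing linear topologization), so all three are inherited by any inverse limit of discrete modules.

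The core content is (a)$\Rightarrow$(b). Given the hypothesis, I would fix any neighborhood basis $\U$ of $0$ consisting of open submodules (available by linear topologization) and consider the natural comparison map $\phi \colon M \to \varprojlim_{N \in \U} M/N$ defined by $\phi(m) = (m+N)_{N \in \U}$. This is clearly a continuous $R$-module homomorphism. Injectivity follows from the Hausdorff property: for every open neighborhood $U$ of $0$ there is some $N \in \U$ with $N \subseteq U$, so $\bigcap_{N \in \U} N$ is contained in every open neighborhood of $0$ and hence equals $\{0\}$.

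For surjectivity, I would use completeness. Given a compatible family $(x_N + N)_{N \in \U}$, direct $\U$ by reverse inclusion and consider the net $(x_N)$. Compatibility of the family gives $x_{N'} - x_N \in N$ whenever $N' \subseteq N$, so for any $N_0 \in \U$ and $N', N'' \subseteq N_0$ in $\U$, one has $x_{N'} - x_{N''} \in N_0$, showing the net is Cauchy. Its limit $m \in M$ (guaranteed by completeness) satisfies $\phi(m) = (x_N + N)$, by continuity of each canonical surjection $M \to M/N$ and the fact that the net $(x_{N'})$ is eventually in the closed coset $x_N + N$ for each fixed $N$.

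It remains to upgrade this algebraic isomorphism to a topological one, which will simultaneously yield the concluding statement of the lemma about the topology being induced from the product. For this I would verify that $\phi$ is open onto its image: the basic open sets of $M$ are cosets $x+N$ with $x \in M$ and $N \in \U$, and $\phi(x+N) = g_N^{-1}(\{x+N\}) \cap \phi(M)$ is open in $\phi(M)$ because $\{x+N\}$ is open in the discrete module $M/N$. The main obstacle I anticipate is the Cauchy-net bookkeeping in the surjectivity step, where one must correctly identify the directed set $\U$ (ordered by reverse inclusion) indexing the inverse system with the filter basis of neighborhoods of $0$, and argue that limits along this net match the prescribed cosets in each quotient.
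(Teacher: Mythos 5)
Your proof is correct and follows essentially the same approach as the paper's. Both arguments establish (a)$\Rightarrow$(b) by building the comparison map to the inverse limit, using the Hausdorff property for injectivity and completeness (via a Cauchy net of coset representatives indexed by $\U$ under reverse inclusion) for surjectivity, and then upgrading to a topological isomorphism; the paper phrases this last step as exhibiting a common neighborhood basis of $0$, while you show $\phi$ is open onto its image, which amounts to the same computation. The remaining implications (b)$\Rightarrow$(c) via Lemma~\ref{lem:open} and (c)$\Rightarrow$(a) via Remark~\ref{rem:inverse limit} are handled identically.
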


\begin{proof}
Suppose that~(a) holds and let $\U$ be a neighborhood basis of $0$ consisting of open
submodules of $M$. To deduce~(b), first note that the Hausdorff property of $M$ ensures that
natural map $\phi \colon M \to \prod_{N \in \U} M/N$ is an embedding. The image of this map lies 
in the closed submodule  $\varprojlim_{N \in \U} M/N$ of $\prod M/N$. To see that $\phi$ is surjective,
let $x \in \varprojlim_{N \in \U} M/N$ be represented by the compatible family $(x_N)_{N \in \U}$ with 
$x_N \in M/N$ for all $N \in \U$. For each $N \in \U$, fix $y_N \in M$ such that $y_N + N = x_N$. 
Then it is straightforward to show from the compatibility condition on the $x_N$ that $(y_N)_{N \in \U}$ 
forms a Cauchy net, which converges to some $y \in M$ by completeness. 
We claim that this element satisfies $y + N = y_N + N = x_N$ for all $N \in \U$. Indeed, as 
$y = \lim y_N$, for fixed $N \in \U$ there exists $N' \subseteq N$ such that $y - y_{N'} \in N$.
But compatibility of the family $(x_N) = (y_N + N)$ implies that $y_{N'} + N = y_N + N$, so that
$y + N = y_{N'} +N = y_N + N$. It follows that $y$ has image $\phi(y) = (y + N)_{N \in \U} 
= (x_N)_{N \in \U} = x$, as desired.
Finally, to see that $\phi$ is a
homeomorphism onto the inverse limit, it suffices to show that upon identifying $M$ with
the inverse limit, they share a common neighborhood basis of $0$. As each of the components
of the product $\prod_{N \in \U} M/N$ is discrete by Lemma~\ref{lem:open}, the comments 
preceding this lemma imply that a neighborhood basis of $\varprojlim M/N$ is given by 
$\pi_N^{-1}(0)$, where each $\pi_N$ is the projection of the product onto the corresponding 
factor for $N \in \U$.
But under the isomorphism $M \cong \varprojlim M/N$, each $\pi_N^{-1}(0)$ corresponds
to $N \in \U$, so the claim is proved. Moreover, this establishes the last claim about the
topology on $M$ being induced by that of $\prod_{N \in \U} M/N$.

Clearly (b) implies (c). Finally, assume that (c) holds, so that $M = \varprojlim M_i$ is the limit
of a directed system of discrete topological modules $M_i$. Because the $M_i$ are Hausdorff,
complete, and linearly topologized, we see from Remark~\ref{rem:inverse limit} that the limit 
$M = \varprojlim M_i$ satisfies condition~(a).
\end{proof}

\begin{definition}
A topological module that satisfies the equivalent conditions of the previous lemma will
be called \emph{pro-discrete}.  A topological ring $R$ is \emph{left pro-discrete} if ${}_R R$
is pro-discrete as a topological $R$-module, and right pro-discrete rings are similarly
defined. Further, $R$ is said to be \emph{pro-discrete} if it Hausdorff, complete, and has
a neighborhood basis of zero consisting of two-sided ideals.
\end{definition}

\begin{remark}\label{rem:left and right prodiscrete}
As the terminology suggests, a topological ring $R$ is left and right pro-discrete if and only 
if it is pro-discrete. Clearly every pro-discrete ring is left and right pro-discrete. 
Conversely, suppose that $R$ is left and right pro-discrete, and let $N$ be an open left ideal.
Since $R$ is right pro-discrete, there is an open right ideal $M$ such that $M\subseteq N$, 
and again, by left pro-discreteness there is an open left ideal $N'$ with $N'\subseteq M$. 
Then $I=RM \subseteq N$ since $N$ is a left ideal. Now $I$ contains $RN'=N'$, so that $I$ is
open by Lemma~\ref{lem:open}.
Hence, every open neighborhood of $0$ contains an open two-sided ideal $I$, making $R$
pro-discrete.
\end{remark}

 In the next lemma we note that the property of pro-discreteness is inherited by closed subrings.
(A similar statement holds for topological modules, but we will not make use of this fact.)

\begin{lemma}\label{lem:closed prodiscrete}
Every closed subring of a (left) pro-discrete ring is (left) pro-discrete.
\end{lemma}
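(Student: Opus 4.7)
The plan is to unpack the definition of left pro-discreteness via Lemma~\ref{lem:prodiscrete} and verify that each of its three ingredients --- Hausdorff, complete, and left linearly topologized --- is inherited by a closed subring in the subspace topology. Since $R$ is left pro-discrete, ${}_R R$ is Hausdorff, complete, and has a neighborhood basis of $0$ consisting of open left ideals; I want to transfer all three properties to $S$.

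First I would dispense with the easy halves. A subspace of any Hausdorff space is Hausdorff, so $S$ is Hausdorff in its induced topology. For completeness, the excerpt already notes in Section~\ref{sec:topology} that a closed subgroup of a complete topological abelian group is complete, so the closedness hypothesis on $S$ in $R$ immediately gives completeness of $S$. The content of the lemma is therefore entirely in producing a neighborhood basis of $0$ in $S$ consisting of left ideals of $S$.

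For this, let $\U$ be a neighborhood basis of $0$ in $R$ consisting of open left ideals of $R$, guaranteed by Lemma~\ref{lem:prodiscrete} applied to $R$. By definition of the subspace topology, $\{S \cap N \mid N \in \U\}$ is a neighborhood basis of $0$ in $S$. I would then check the one nontrivial point: each $S \cap N$ is a left ideal of $S$. Given $s \in S$ and $x \in S \cap N$, we have $sx \in S$ because $S$ is a subring and $sx \in N$ because $N$ is a left ideal of $R$, hence $sx \in S \cap N$. Thus $S$ is left linearly topologized, and another application of Lemma~\ref{lem:prodiscrete} concludes that $S$ is left pro-discrete.

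For the unqualified (two-sided) case, the same argument goes through with a basis of open two-sided ideals of $R$: if $I \trianglelefteq R$ is such an ideal, then $S \cap I$ is a two-sided ideal of $S$ by the identical multiplicative check applied on both sides. Alternatively one could derive the two-sided statement by applying the one-sided case separately on the left and the right, and then invoking Remark~\ref{rem:left and right prodiscrete}. I do not anticipate any real obstacle; the lemma is in essence a routine verification that the three defining properties of (left) pro-discreteness are each of the sort that pass to closed subobjects.
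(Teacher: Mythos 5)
Your proof is correct and follows essentially the same route as the paper: pass Hausdorffness to the subspace, completeness to the closed subgroup, and intersect a neighborhood basis of open (left) ideals of $R$ with $S$ to obtain one for $S$. The paper handles the one-sided and two-sided cases simultaneously with a parenthetical ``(left),'' exactly as your direct argument does, so your appeal to Remark~\ref{rem:left and right prodiscrete} is an unnecessary (though valid) alternative.
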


\begin{proof}
Suppose $B$ a is (left) pro-discrete ring with a closed subring $A \subseteq B$. 
Then $A$ is certainly Hausdorff in its inherited topology, and because $A$ is closed in
the complete ring $B$ we find that $A$ is also complete. Finally, any open (left) ideal
$I$ of $B$ intersects to an open (left) ideal $I \cap A$ of $A$. Because such $I$ form
a neighborhood base of zero in $B$, the contractions $I \cap A$ form a neighborhood
base of zero in $A$. Thus $A$ is (left) linearly topologized, showing that it is (left)
pro-discrete.
\end{proof}

In particular, our discussion of the finite topology above makes it clear that the topological ring 
$\End(V)$ for a right $D$-vector space $V$ is left pro-discrete. Thus every closed subring of
$\End(V)$ is left pro-discrete. 

In the case when $D = \K$ is a field, this raises an interesting question about representations of 
topological algebras. Fixing an infinite-dimensional $\K$-vector space $V$, to what extent can one 
characterize those left pro-discrete $\K$-algebras that can be realized as closed subalgebras of 
$\End(V)$?
Corollary~\ref{cor:discrete closed} characterizes exactly which \emph{discrete} algebras have
such a representation, purely in terms of their dimension. A topological characterization would
necessarily extend this result.

\section{Infinite Wedderburn-Artin Theorem}
\label{sec:WA}

As mentioned in the introduction, the structure theory of artinian rings plays an important
role in the theory of diagonalizability for operators on a finite-dimensional vector space $V$.
Every subalgebra of $\End(V)$ is finite-dimensional and therefore artinian; diagonalizable
subalgebras are furthermore semisimple. 

If $V$ is infinite-dimensional, we have seen in Proposition~\ref{prop:discreteclosed} that
$\End(V)$ contains a wild array of discrete subalgebras. But it turns out that $\End(V)$ itself,
as well as its diagonalizable subalgebras, satisfy a well-known condition of being ``almost left
artinian'' (in a topological sense) called \emph{pseudocompactness}. In fact, they satisfy an
even stronger ``almost finite-dimensional'' property that we shall call \emph{$\K$-pseudocompactness}.
Further, both of these algebras of interest are Jacobson semisimple, so that they satisfy
a suitable infinite-dimensional version of semisimplicity. 

In this section we aim to present a Wedderburn-Artin theorem for left pseudocompact, Jacobson
semisimple rings in Theorem~\ref{thm:WA}, which gathers some known and somewhat independent 
results on topological semisimplicity along with some new ones. Using our methods, we recover 
some results from~\cite{Z1,Z}, along with the classical Wedderburn-Artin
theorem and similar types of theorems for algebras and coalgebras. 

\subsection{Pseudocompact and linearly compact modules and rings}

Pseudocompactness is an important property in topological algebra that expresses a particular way
for a module or ring to be ``close to having finite length.'' We recall the definition after giving a
few equivalent formulations of this property.

We say that a submodule $N$ of a module $M$ has \emph{finite colength} if $M/N$ is a module of finite length.

\begin{lemma}\label{lem:pseudocompact}
Let $M$ be a left topological module over a topological ring $R$. The following are equivalent:
\begin{enumerate}[label=(\alph*)]
\item $M$ is Hausdorff, complete, and has a neighborhood basis of $0$ consisting of open submodules
of finite colength;
\item $M$ is pro-discrete and every open submodule of $M$ has finite colength;
\item $M$ is an inverse limit in ${}_R \TMod$ of discrete topological $R$-modules of finite length.
\end{enumerate}
\end{lemma}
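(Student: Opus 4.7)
The plan is to establish the cycle (a) $\Rightarrow$ (b) $\Rightarrow$ (c) $\Rightarrow$ (a), leaning heavily on Lemma~\ref{lem:prodiscrete} and Remark~\ref{rem:inverse limit}, so that the bulk of the work reduces to tracking the ``finite colength'' condition through the equivalences already established for pro-discrete modules. Observe that in all three conditions, the underlying module is pro-discrete; the added content is that the discrete quotients (or factors) arising are of finite length. This is the only extra datum that needs to be transported around the cycle.

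For (a) $\Rightarrow$ (b), note that a neighborhood basis of $0$ by open submodules forces $M$ to be linearly topologized, so combined with the Hausdorff and complete hypotheses, Lemma~\ref{lem:prodiscrete} yields that $M$ is pro-discrete. To verify the finite colength of an arbitrary open submodule $N \subseteq M$, pick $N_0 \subseteq N$ from the given basis; then $M/N_0$ has finite length, and since $M/N$ is a quotient of $M/N_0$, it has finite length as well. For (b) $\Rightarrow$ (c), Lemma~\ref{lem:prodiscrete} presents $M = \varprojlim M/N$ where $N$ ranges over any neighborhood basis of $0$ consisting of open submodules; by hypothesis each such $M/N$ is a discrete $R$-module of finite length, which is exactly what (c) asks for.

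The main content is (c) $\Rightarrow$ (a). Write $M = \varprojlim M_i$ as the limit in ${}_R \TMod$ of an inversely directed system of discrete finite-length modules. Each $M_i$ is Hausdorff, complete, and linearly topologized (the submodule $\{0\}$ being open), so by Remark~\ref{rem:inverse limit}, $M$ inherits all three properties. To exhibit a neighborhood basis of $0$ by open submodules of finite colength, use the description of the limit topology preceding Lemma~\ref{lem:prodiscrete}: thanks to inverse directedness of the system, the sets $g_i^{-1}(0) = \kernel(g_i)$ for the canonical maps $g_i \colon M \to M_i$ already form a basis of neighborhoods of $0$. Each such $\kernel(g_i)$ is an open submodule, and the induced injection $M/\kernel(g_i) \hookrightarrow M_i$ identifies $M/\kernel(g_i)$ with a submodule of a finite-length module, hence of finite length. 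This completes condition~(a).

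I do not anticipate a serious obstacle: the statement is essentially a ``finite-length enhancement'' of Lemma~\ref{lem:prodiscrete}. The one subtlety worth being careful about is in (c) $\Rightarrow$ (a), namely that one must use inverse directedness to reduce from finite intersections of subbasic kernels to a single kernel $\kernel(g_i)$, since an arbitrary finite intersection would only embed $M$ into a finite \emph{product} of the $M_i$; fortunately this simplification is precisely what the excerpt records just before Lemma~\ref{lem:prodiscrete}.
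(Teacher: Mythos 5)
Your proposal is correct and follows essentially the same strategy as the paper: both reduce the lemma to Lemma~\ref{lem:prodiscrete} and just track the finite-colength condition across the equivalence. You arrange the implications as the cycle (a)$\Rightarrow$(b)$\Rightarrow$(c)$\Rightarrow$(a) while the paper proves (a)$\Leftrightarrow$(c), (b)$\Rightarrow$(c), and (a)$\Rightarrow$(b), but the underlying arguments are identical; your unpacking of (c)$\Rightarrow$(a) via the basis $\{\kernel(g_i)\}$ and the injection $M/\kernel(g_i) \hookrightarrow M_i$ is a helpful expansion of what the paper compresses into a citation of Lemma~\ref{lem:prodiscrete}. (One small remark: the ``subtlety'' you flag at the end is not really an obstacle, since a finite product of finite-length modules still has finite length, but invoking inverse directedness is the cleaner route.)
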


\begin{proof}
The equivalence of (a) and (c) and the implication (b)$\Rightarrow$(c) follow directly from 
Lemma~\ref{lem:prodiscrete} using the basis $\U$ of open submodules of $M$ that have finite colength. 
Now assume that~(a) holds; we will deduce~(b). It follows from Lemma~\ref{lem:prodiscrete} 
that $M$ is pro-discrete. Given any open submodule $N$ of $M$, we are given that there exists
an open submodule $L \subseteq N$ of finite colength. It follows immediately that $N$ has finite
colength, as desired.
\end{proof}

A module satisfying the equivalent conditions above is called a \emph{pseudocompact} module. A topological ring
$R$ is said to be \emph{left pseudocompact} if ${}_R R$ is a pseudocompact topological module.
We also recall that a complete Hausdorff topological ring $R$ which has a basis of neighborhoods 
of $0$ consisting of two-sided ideals $I$ such that $R/I$ has finite length both on the left and on
the right (i.e., $R/I$ is a a two-sided artinian ring) is called a \emph{pseudocompact ring}. 

\begin{remark}\label{rem:left and right pseudocompact}
Notice that a topological ring $R$ is left and right pseudocompact if and only if it is pseudocompact.
The argument is identical to the one given in Remark~\ref{rem:left and right prodiscrete}, taking into
account that if the one-sided ideals $M,N,N'$ there have finite colength, then the ideal $I$ has finite
colength as both a left and a right ideal, making $R/I$ an artinian ring.
\end{remark}

The following key example is of particular interest to us.

\begin{example}
\label{e.WA}
Let $D$ be a division ring and $V$ a right $D$-vector space. 
As shown in the discussion of the finite topology in Section~\ref{sec:topology}, the topological
ring $\End(V)$ is Hausdorff, complete, and has a basis of open left ideals having finite
colength. Thus $\End(V)$ is left pseudocompact.
\end{example}

\begin{remark}\label{rem:WA.not}
We also observe that in the example above, if $V$ has infinite dimension over $D$, 
then $\End(V)$ is not right pro-discrete and thus is not right pseudocompact. 
Indeed, if $\End(V)$ is pro-discrete, then it has a neighborhood 
basis of $0$ consisting of two-sided ideals. It is well known that the left socle $\Sigma$ 
of $\End(V)$ (the sum of all the simple left ideals of $\End(V)$) is equal to the
ideal of finite rank operators on $V$ (see~\cite[Exercise~11.8]{FC}), 
and that $\Sigma$ is the smallest nonzero two-sided ideal of $\End(V)$ (see~\cite[Exercise~3.16]{FC}). 
Hence $\Sigma \subseteq I$ for every non-zero open ideal $I$, and since $\End(V)$ is 
Hausdorff, it follows that $0$ must be open for $\End(V)$ to be pro-discrete. 
Thus there is a finite-dimensional subspace $W\subseteq V$ with $W^\perp = 0$, 
which shows that $V=W$ must be finite-dimensional.
\end{remark}

Given a topological ring $R$, let ${}_R \PC$ denote the full subcategory of ${}_R \TMod$
whose objects are the left pseudocompact topological $R$-modules. For the portions
of our Wedderburn-Artin theorem that mimic the structure of module categories over
semisimple rings (e.g., every short exact sequence splits), we will make use of Gabriel 
duality for categories of pseudocompact modules. 
To this end, we recall that a \emph{Grothendieck category} is
an abelian category with a \emph{generator} (i.e., an object $G$ such that, for any pair
of morphisms $f,g \colon X \to Y$ with the same domain and codomain, if $f \neq g$
then there exists $h \colon G \to X$ such that $f \circ h \neq g \circ h$) in which all 
small coproducts exist, and with exact direct limits. Also, an abelian category is said to 
be \emph{locally finite} if every object is the (directed) colimit of its finite-length subobjects.
We recommend~\cite[Chapters IV--V]{Stenstroem} as a basic reference on abelian and Grothendieck 
categories. 

\begin{remark}\label{rem:general pseudocompact}
By a well-known result of Gabriel, the opposite category ${}_R \PC\op$ is Grothendieck and locally finite
for any left pseudocompact ring $R$. What is not so well-known is that essentially the same proof
of~~\cite[Th\'{e}or\'{e}me~IV.3]{G} applies more generally to show that, \emph{for any (not necessarily 
left pseudocompact) topological ring $R$, the category ${}_R \PC$ is abelian and $\Cc = {}_R \PC\op$ 
is Grothendieck and locally finite.}
Gabriel's proof makes use of~\cite[Propositions~IV.10--11]{G}, both of which are purely
module-theoretic results whose proofs do not make reference to the base ring itself.
The only small adjustment that one needs to make is in identifying a cogenerator for the 
category ${}_R \PC$, as follows.
We claim that a family of cogenerators for ${}_R \PC$ is given by the discrete finite-length 
left $R$-modules. Indeed, if $f,g \colon M \to N$ are morphisms in ${}_R \PC$ with $f \neq g$, 
then there exists an element $x \in M$ and an open submodule $U \subseteq N$ such that 
the composite
\[
M \overset{f-g}{\longrightarrow} N \twoheadrightarrow N/U
\]
sends $x$ to a nonzero element. Because $N/U$ is a discrete finite-length left $R$-module,
we conclude that such modules indeed form a cogenerating set for ${}_R \PC$. 
The isomorphism classes of discrete finite-length $R$-modules form a set (as all of them are
isomorphic to $R^n/U$ for some integer $n \geq 1$ and some open submodule $U \subseteq R^n$),
so the product of a set of such representatives forms a cogenerator of ${}_R \PC$
(and a generator in the dual category ${}_R \PC\op$).
\end{remark}

An elementary observation that shall be used frequently below is that a simple left
pseudocompact (or even linearly topologized and Hausdorff) module is discrete, and 
conversely, every simple discrete left module is pseudocompact.

We will also make use of the notion of linear compactness. A left topological
module $M$ over a topological ring $R$ is said to be \emph{linearly compact} if
it is linearly topologized, Hausdorff, and every family of closed cosets in $M$
with the finite intersection property (i.e., the intersection of any finite subfamily is
nonempty) has nonempty intersection. It is known that
every linearly compact module is also complete~\cite[Theorem~28.5]{Warner},
so linearly compact modules are pro-discrete. Furthermore, it is known that 
linear compactness is preserved when passing to products and closed submodules
of linearly compact modules; see~\cite[Theorems~28.5--6]{Warner}.
Thus by Remark~\ref{rem:inverse limit} we see that linearly compact modules
are closed under (inverse) limits in the category ${}_R \TMod$.
Furthermore, since the quotient of a linearly compact module by a closed
submodule is again linearly compact~\cite[Theorem~28.3]{Warner}, we 
have the following immediate consequence of Lemma~\ref{lem:prodiscrete}, 
part of which is found in~\cite[Theorem~28.15]{Warner}.

\begin{lemma}\label{lem:linearly compact}
Let $M$ be a left topological module over a topological ring $R$. The following are equivalent:
\begin{enumerate}[label=(\alph*)]
\item $M$ is linearly compact;
\item $M$ is Hausdorff, complete, and has a neighborhood basis of $0$ consisting of open submodules
$N$ such that $M/N$ is linearly compact for the discrete topology;
\item $M$ is pro-discrete and every open submodule $N$ of $M$ is such that $M/N$ is linearly
compact for the discrete topology;
\item $M$ is an inverse limit in ${}_R \TMod$ of discrete linearly compact left $R$-modules.
\end{enumerate}
\end{lemma}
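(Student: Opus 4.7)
The plan is to follow the structure of Lemma~\ref{lem:pseudocompact}, leveraging Lemma~\ref{lem:prodiscrete} to handle pro-discreteness and the standard permanence properties of linear compactness (closure under products, closed submodules, and quotients by closed submodules) cited just before the statement. I would argue in the cycle (a)$\Rightarrow$(b)$\Rightarrow$(c)$\Rightarrow$(d)$\Rightarrow$(a).

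For (a)$\Rightarrow$(b), suppose $M$ is linearly compact. By definition $M$ is Hausdorff and linearly topologized, and by~\cite[Theorem~28.5]{Warner} it is complete, so it has a neighborhood basis of $0$ consisting of open submodules. Any open submodule is closed (by Lemma~\ref{lem:open}), and a quotient of a linearly compact module by a closed submodule is linearly compact by~\cite[Theorem~28.3]{Warner}; so the discrete module $M/N$ is linearly compact for each $N$ in the basis. For (b)$\Rightarrow$(c), Lemma~\ref{lem:prodiscrete} already yields that $M$ is pro-discrete. Given an arbitrary open submodule $N \subseteq M$, there exists an $N_0$ in the distinguished basis with $N_0 \subseteq N$, so $M/N$ is a discrete quotient of the discrete linearly compact module $M/N_0$; since the quotient of a discrete linearly compact module by any submodule is again linearly compact discrete (apply~\cite[Theorem~28.3]{Warner} with $N/N_0$ closed in $M/N_0$), condition~(c) follows.

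For (c)$\Rightarrow$(d), Lemma~\ref{lem:prodiscrete} gives the presentation
\[
M \;=\; \varprojlim_{N \in \U} M/N
\]
in ${}_R\TMod$, where $\U$ is a neighborhood basis of $0$ consisting of open submodules. By hypothesis each $M/N$ is discrete and linearly compact, establishing (d). Finally, for (d)$\Rightarrow$(a), write $M = \varprojlim M_i$ in ${}_R\TMod$ with each $M_i$ discrete and linearly compact. By Remark~\ref{rem:inverse limit}, since linear compactness is preserved under products and closed submodules (\cite[Theorems~28.5--28.6]{Warner}), and since the inverse limit is realized as a closed submodule of $\prod M_i$, the module $M$ is linearly compact.

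The only subtle point is the step (c)$\Rightarrow$(d), where one must take care that the diagram whose limit recovers $M$ really does consist of linearly compact discrete modules and that the limit is computed in ${}_R\TMod$ in the sense of Lemma~\ref{lem:prodiscrete}; this is handled cleanly by invoking that lemma directly rather than re-proving the product/equalizer presentation. No step presents a genuine obstacle—each equivalence reduces to a direct application of Lemma~\ref{lem:prodiscrete} combined with one of the cited closure properties of linear compactness from~\cite{Warner}.
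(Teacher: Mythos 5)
Your proof is correct and takes essentially the same approach the paper intends: the paper presents this lemma as an ``immediate consequence'' of Lemma~\ref{lem:prodiscrete} together with the quotient, product, and closed-submodule permanence properties of linear compactness from~\cite{Warner}, and your cycle (a)$\Rightarrow$(b)$\Rightarrow$(c)$\Rightarrow$(d)$\Rightarrow$(a) fills in exactly those details.
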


A discrete artinian module is linearly compact~\cite[Theorem~28.12]{Warner}, with the
following consequence.

\begin{corollary}\label{cor:pseudocompact is linearly compact}
Every pseudocompact left module $M$ over a topological ring $R$ is linearly compact.
\end{corollary}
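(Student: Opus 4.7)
The plan is to deduce this corollary directly from the characterization of pseudocompact modules in Lemma~\ref{lem:pseudocompact} together with the characterization of linearly compact modules in Lemma~\ref{lem:linearly compact}, using as a bridge the cited fact that a discrete artinian module is linearly compact (Warner, Theorem~28.12).

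More concretely, suppose $M$ is a pseudocompact left $R$-module. By Lemma~\ref{lem:pseudocompact}(a), $M$ is Hausdorff, complete, and has a neighborhood basis $\U$ of $0$ consisting of open submodules $N$ of finite colength. For every $N \in \U$ the quotient $M/N$ is a discrete module of finite length, and any finite-length module is in particular artinian. Invoking the fact that a discrete artinian module is linearly compact, each $M/N$ (with $N \in \U$) is linearly compact in its discrete topology. This shows that $M$ satisfies condition~(b) of Lemma~\ref{lem:linearly compact}, and therefore $M$ is linearly compact.

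Alternatively (and essentially equivalently), one could argue via condition~(c) of Lemma~\ref{lem:pseudocompact}: write $M = \varprojlim M_j$ in ${}_R \TMod$, where the $M_j$ are discrete finite-length modules, hence discrete artinian, hence linearly compact. Since linear compactness is preserved under products and closed submodules (Warner, Theorems~28.5--6), Remark~\ref{rem:inverse limit} gives that the inverse limit $M$ is linearly compact.

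There is no real obstacle: once the relevant characterizations have been assembled in the previous lemmas, the only ingredient needed from outside the excerpt is the implication ``discrete artinian $\Rightarrow$ linearly compact,'' which is cited explicitly. The argument is thus a short chase through the equivalent conditions.
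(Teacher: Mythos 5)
Your proof is correct and follows essentially the same route as the paper's: the paper writes $M$ as an inverse limit of discrete finite-length modules (Lemma~\ref{lem:pseudocompact}), notes these are linearly compact by the cited discrete-artinian fact, and concludes via Lemma~\ref{lem:linearly compact}. Your primary argument uses the equivalent "(a)/(b)" forms of those lemmas rather than "(c)/(d)", but this is merely a cosmetic difference, and your alternative paragraph is identical to the paper's proof.
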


\begin{proof}
By Lemma~\ref{lem:pseudocompact}, $M$ is an inverse limit of discrete modules of finite
length. These modules are linearly compact, whence Lemma~\ref{lem:linearly compact} gives
that $M$ is linearly compact.
\end{proof}

Applying this terminology directly to a topological ring $R$, we say that $R$ is
\emph{left linearly compact} if ${}_R R$ is linearly compact, \emph{right linearly compact}
if $R_R$ is linearly compact, and \emph{linearly compact} if it is both left and right
linearly compact.

\subsection{The Jacobson radical of pro-discrete rings}

We note that certain results of~\cite{G} on the Jacobson radical of pseudocompact rings 
generalize to the pro-discrete case. These will also be used to derive 
alternate proofs of results from~\cite{Z1,Z} on Jacobson radicals of prolimits of rings
and Jacobson semisimple, linearly compact rings.

We will make use of a natural topological version of the Jacobson radical for
a left pro-discrete ring $R$: we let $J_0(R)$ denote the intersection of all of
the open maximal left ideals of $R$. In the case where $R$ is (two-sided)
pro-discrete, this definition is left-right symmetric. For if $I$ is an open ideal of $R$, 
all of the left or right ideals containing $I$ are open and $R/I$ is discrete by 
Lemma~\ref{lem:open}. Thus the two radicals $J$ and $J_0$ coincide for $R/I$ 
and there is an open ideal $J_I$ of $R$ such that $J(R/I)=J_0(R/I)=J_I/I$.
Since every open maximal left or right ideal contains an open ideal $I$, we find that 
the intersection of all open maximal left ideals equals the intersection of all these $J_I$,
which also equals the intersection of all open maximal right ideals. 

The following is likely well known in some other form, but we include it for convenience:

\begin{theorem}\label{thm:WA.jq}
Let $R$ be a pro-discrete ring.
\begin{enumerate}
\item As topological rings, $R=\varprojlim R/I$ where $I$ ranges over the open ideals of $R$. The topology 
on $R$ is induced from the product topology of $\prod_{\textnormal{open }I}R/I$ via 
the usual inclusion of the inverse limit.
\item An element of $R$ is invertible in $R$ if and only if it is invertible modulo $I$ for 
every open ideal $I$.
\item $J(R)=J_0(R)$, and $J(R)$ is a closed ideal. Moreover, $x\in J(R)$ if and 
only if $x+I \in J(R/I)$ for all open ideals $I$, so that under the isomorphism of~(1), we have 
$J(R)=\varprojlim_{\textnormal{open }I}J(R/I)$.
\item $R/J(R) = \varprojlim_{\textnormal{open }I}R/J_I$, where $I\subseteq J_I\subseteq R$ is such 
that $J_I/I=J(R/I)$, and $R/J(R)$ is pro-discrete with a neighborhoods basis of $0$ consisting of 
the ideals $J_I/J(R)$.
\end{enumerate}
\end{theorem}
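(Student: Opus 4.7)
The plan is to establish the four parts sequentially, with Lemma~\ref{lem:prodiscrete} serving as the central structural tool and each part feeding into the next. For Part~(1), I would apply Lemma~\ref{lem:prodiscrete} directly to the topological left $R$-module ${}_R R$, choosing the neighborhood basis of zero to consist of the open two-sided ideals of $R$ (which exists because $R$ is pro-discrete). The lemma then furnishes the topological module isomorphism with $\varprojlim R/I$, together with the description of the topology as induced from the product $\prod R/I$. The diagonal map $r \mapsto (r+I)_I$ is visibly a ring homomorphism and each $R/I$ is a topological ring, so this upgrades to an isomorphism of topological rings.

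For Part~(2), one direction is immediate. For the converse, assuming $x$ is invertible modulo every open ideal $I$ with $y_I \in R/I$ the unique inverse of $x+I$, uniqueness forces the compatibility of $(y_I)_I$ under the connecting surjections $R/J \twoheadrightarrow R/I$ for $J \subseteq I$. By Part~(1) this family defines an element $y \in R$, and the relations $xy - 1 \in I$ and $yx - 1 \in I$ for every open $I$, together with Hausdorffness, give $xy = yx = 1$.

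For Part~(3), I would prove the chain $J(R) \subseteq J_0(R) \subseteq \bigcap_I J_I \subseteq J(R)$. The first containment is definitional. The second follows because every open maximal left ideal $M$ contains some basic open ideal $I$, whence $M$ is the preimage in $R$ of a maximal left ideal of $R/I$, and so $M \supseteq J_I$; conversely $J_I$ is precisely the intersection of maximal left ideals of $R/I$ pulled back to $R$. For the third, I would use the characterization that $x \in J(R)$ iff $1 + rxs$ is invertible in $R$ for all $r,s \in R$: assuming $x \in J_I$ (equivalently $x+I \in J(R/I)$) for all $I$, each $1 + rxs + I$ is a unit in $R/I$, whence $1+rxs$ is invertible in $R$ by Part~(2). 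Closedness of $J(R)$ follows because it is an intersection of open (hence closed, by Lemma~\ref{lem:open}) maximal left ideals. The identification $J(R) = \varprojlim J(R/I)$ then comes from restricting the isomorphism of Part~(1): a compatible family $(x+I)_I$ lies in $J(R)$ iff each component $x+I$ lies in $J(R/I) = J_I/I$.

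For Part~(4), the continuous ring map $R \to \varprojlim R/J_I$ has kernel $\bigcap J_I = J(R)$ by Part~(3), yielding an injection $R/J(R) \hookrightarrow \varprojlim R/J_I$. Once surjectivity is in hand, the inverse-limit topology, generated by preimages of singletons in the discrete quotients $R/J_I$, pulls back to the topology on $R/J(R)$ with neighborhood basis $\{J_I/J(R)\}$, confirming pro-discreteness. I expect surjectivity to be the main obstacle: given a compatible family $(r_I + J_I)_I$, the cosets $\{r_I + J_I\}$ form a family of closed subsets of $R$ (each $J_I$ is open, hence closed) with the finite intersection property, because for any finite collection indexed by $I_1,\dots,I_n$ with intersection $I$, the element $r_I$ lies in every $r_{I_k} + J_{I_k}$ via $J_I \subseteq J_{I_k}$ together with compatibility. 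Extracting a common element of $\bigcap_I(r_I + J_I)$ then requires a compactness-type argument on $R$; I would either invoke a form of linear compactness available in the pro-discrete setting (applicable in particular when $R$ is pseudocompact by Corollary~\ref{cor:pseudocompact is linearly compact}), or construct the element directly as the limit of a carefully chosen Cauchy net using the completeness of $R$ afforded by Part~(1).
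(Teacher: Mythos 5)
Your treatment of parts (1)--(3) coincides with the paper's: part (1) is Lemma~\ref{lem:prodiscrete} applied to ${}_RR$ with the open two-sided ideals as the basis $\U$, part (2) is the compatibility-by-uniqueness argument, and part (3) proves $J(R) = J_0(R)$ by first observing $J_0(R) = \bigcap_I J_I$ and then using part (2) to show this intersection lies in $J(R)$ (the paper uses the left-sided criterion with $1-ax$; your two-sided $1+rxs$ is an equivalent variant). Closedness and the $\varprojlim$-description of $J(R)$ follow as you say.

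For part (4), you have correctly and explicitly identified the delicate point --- surjectivity of $R/J(R) \to \varprojlim R/J_I$ --- which the paper dispatches with a single ``Consequently.'' However, neither of your two proposed routes actually works at the stated level of generality. The linear-compactness/FIP route does close the gap, but only under the stronger hypothesis that $R$ is linearly compact (e.g.\ pseudocompact); the theorem as stated is for pro-discrete $R$, for which no such compactness is available. The Cauchy-net route fails for a concrete reason: if $r_I \in R$ are representatives of the compatible family, then compatibility only tells you $r_I - r_L \in J_L$ for $I \subseteq L$, and since every $J_L$ contains $J(R)$ these differences do not eventually land in an arbitrary open neighborhood of $0$ --- the net $(r_I)$ is Cauchy in $R$ only when $J(R)=0$. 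So the net does not converge to a lift, and completeness of $R$ alone does not rescue the argument. (Indeed, while the image of $R$ is easily seen to be dense in $\varprojlim R/J_I$, density plus completeness of the target does not give surjectivity.) Your write-up, to its credit, is honest that surjectivity ``requires a compactness-type argument,'' but as it stands part (4) remains unproved in the generality claimed; you would need either to supply the missing argument or to restrict attention to the linearly compact case, which is in fact the only setting in which the paper subsequently uses this result.
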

\begin{proof}
(1) This is a standard argument used, for example, on complete valuation rings, profinite groups, 
algebras, etc.\ (see also \cite{G}). 
It follows as in the proof of Lemma~\ref{lem:prodiscrete}, noting that the connecting homomorphisms 
in the inversely directed system of the $R/I$ for open ideals $I$ are (continuous) ring homomorphisms. 

(2) We need only prove the if clause. Assume $y+I\in R/I$ is invertible for all open $I$. Let $u_I\in R$ 
be such that $u_I+I$ is the inverse of $y+I$ (in $R/I$). If $I\subseteq L$ are open ideals and
$\pi_{I,L}:R/I\rightarrow R/L$ is the canonical surjection, then obviously $\pi_{I,L}(u_I+I)=u_L+L$, 
since the inverse of $y+L\in R/L$ is unique. 
Thus we have an element $(u_I) \in \varprojlim_{I \textnormal{ open}} R/I$, and under the identification
$R = \varprojlim R/I$ this element forms an inverse to $y$.

(3) Obviously, $J(R)\subseteq J_0(R)$. Conversely, let $x\in J_0(R)$ and let $a \in R$. 
For each open $I$,  we have $x+I \in J(R/I)$ since every maximal left (or right) ideal of the discrete 
ring $R/I$ is open; hence, $(1-ax)+ I$ is invertible in $R/I$. By~(2), $1-ax$ is invertible in $R$. 
Thus the ideal $J_0(R)$ must be contained in $J(R)$, yielding the desired equality. 
Because $J_0(R) = J(R)$ is an intersection of open, hence closed, left ideals, it is closed. 
An argument similar to the one above shows that $x \in J(R)$ if and only if $x+I \in J(R/I)$ for
all open ideals $I$. In particular, $J(R)=\varprojlim_{I \textnormal{ open}}J(R/I)$ follows.  

(4) There is a canonical map $R=\varprojlim_{I \textnormal{ open}} R/I \longrightarrow
\varprojlim_{I \textnormal{ open}} R/J_I$. The kernel of this morphism is 
$\bigcap_{I \textnormal{ open}}J_I$ and this is equal to $J(R)$ by the discussion preceding the
theorem. Consequently, the isomorphism of (4) follows. From this isomorphism we can deduce
as in the proof of Lemma~\ref{lem:prodiscrete} that $R/J(R)$ is pro-discrete with the described
neighborhood basis of $0$.
\end{proof}

We note that the above generalizes~\cite[Lemma 5]{Z1}, which describes the Jacobson radical
of a left linearly compact ring.

\subsection{An infinite Wedderburn-Artin theorem}

Given a closed submodule $N$ of a topological module $M$, we will say that $N$
has a \emph{closed complement} if there exists a closed submodule $N'$ of $M$
such that $M = N \oplus N'$. It is straightforward to verify that this is equivalent to
the condition that there exists a continuous idempotent endomorphism $e$ of $M$
such that $N = e(M)$. Note that if $N$ is an open submodule of $M$ that is a 
(``purely algebraic'') direct summand of $M$, then any complementary submodule
of $N$ is closed; for if $M = N \oplus N'$ then $M \setminus N' = 
\bigcup_{x \in N \setminus \{0\}} (x + N')$ is a union of open cosets and thus is open
in $M$.

Given an object $X$ of a Grothendieck category $\Cc$, we let $\soc(X)$ denote the
\emph{socle} of $X$ in $\Cc$, which is the colimit (``sum'') of all simple subobjects
of $X$.

\begin{lemma}\label{lem:complements}
Let $M$ be a left topological module over a topological ring $R$. The following are
equivalent:
\begin{enumerate}[label=(\alph*)]
\item $M$ is a product in ${}_R \TMod$ of simple discrete modules;
\item $M$ is pseudocompact and every closed submodule of $M$ has a closed complement;
\item $M$ is pseudocompact and every open submodule of $M$ has a complement.
\end{enumerate}
\end{lemma}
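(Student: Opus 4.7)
The plan is to prove the chain (a) $\Rightarrow$ (b) $\Rightarrow$ (c) $\Rightarrow$ (a), using Gabriel duality through the locally finite Grothendieck category $\Cc = ({}_R\PC)\op$ (Remark~\ref{rem:general pseudocompact}) to reduce conditions (b) and (c) to familiar statements about semisimplicity of an object in a Grothendieck category. The key is that closed submodules of a pseudocompact $M$ correspond to subobjects of $M$ in $\Cc$, and that under this correspondence ``open'' matches ``finite length.''

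For (a) $\Rightarrow$ (b), I first verify that a product $M = \prod_i S_i$ of simple discrete $R$-modules is pseudocompact: the submodules $U_F = \prod_{i \notin F} S_i$ for finite $F \subseteq I$ form an open neighborhood basis of $0$ with finite-length (indeed length $|F|$) quotients, and $M$ is Hausdorff and complete because each factor $S_i$ is. Under the duality ${}_R\PC \rightleftarrows \Cc$, products become coproducts, so $M$ corresponds to $\coprod_i S_i$ in $\Cc$, a coproduct of simple objects, which is a semisimple object of $\Cc$. Therefore every subobject of $M$ in $\Cc$ is a direct summand, and dualizing yields that every closed submodule of $M$ in ${}_R\PC$ has a closed complement. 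The implication (b) $\Rightarrow$ (c) is immediate, since open submodules are closed by Lemma~\ref{lem:open}.

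For (c) $\Rightarrow$ (a), the first step is to observe that open submodules of the pseudocompact module $M$ are precisely the closed submodules $N$ for which $M/N$ has finite length. One direction is Lemma~\ref{lem:pseudocompact}; for the other, if $M/N$ has finite length then it is a pseudocompact module satisfying the descending chain condition on submodules, so any neighborhood basis of $0$ consisting of open submodules has a smallest element, which must be $\{0\}$ by Hausdorffness, forcing $M/N$ to be discrete and hence $N$ to be open. Under Gabriel duality these closed submodules correspond to finite-length subobjects of $M$ viewed in $\Cc$, so (c) becomes the statement that every finite-length subobject of $M$ in $\Cc$ is a direct summand. Given such a subobject $Y$ and any subobject $Z \subseteq Y$ (also of finite length), splitting $M = Z \oplus Z'$ in $\Cc$ and applying the modular law in the subobject lattice gives
\[
Y = Y \cap (Z + Z') = Z + (Y \cap Z') = Z \oplus (Y \cap Z'),
\]
so $Y$ is semisimple. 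Since $\Cc$ is locally finite, $M$ (in $\Cc$) is the directed union of its finite-length subobjects, each contained in $\soc(M)$, whence $M = \soc(M) = \coprod_i S_i$ in $\Cc$ for some family of simple objects $S_i$. Dualizing produces $M \cong \prod_i S_i$ in ${}_R\PC$ with each $S_i$ a simple discrete $R$-module, and the argument of (a) $\Rightarrow$ (b) confirms that this product with its pseudocompact topology coincides with the product in ${}_R\TMod$, yielding (a).

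The main obstacle is the setup for (c) $\Rightarrow$ (a): pinning down the correspondence between open submodules of $M$ and finite-length subobjects of $M$ in $\Cc$ (which requires the small DCC argument above), and then using the modular law to upgrade ``every finite-length subobject is a summand'' to ``every finite-length subobject is semisimple.'' Once these are in hand, local finiteness of $\Cc$ and standard semisimplicity theory in Grothendieck categories finish the argument essentially automatically.
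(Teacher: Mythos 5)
Your proof is correct and follows essentially the same strategy as the paper: pass through Gabriel duality to the locally finite Grothendieck category $\Cc = {}_R\PC\op$, translate conditions (b) and (c) into the statements that every subobject (respectively, every finite-length subobject) of $M$ in $\Cc$ splits off, and then finish with standard semisimplicity arguments. The paper writes the translation down all at once as conditions (a'), (b'), (c') and proves (a')$\Leftrightarrow$(b'), (b')$\Rightarrow$(c'), (c')$\Rightarrow$(a'); you instead run the cycle (a)$\Rightarrow$(b)$\Rightarrow$(c)$\Rightarrow$(a), performing the dictionary translation inline. The two places you differ in detail are helpful rather than problematic: you make explicit the bijection between open submodules and closed submodules of finite colength (via the short DCC/Hausdorff argument showing a finite-length pseudocompact module is discrete), which the paper leaves implicit in its statement of (c'); and for upgrading ``every finite-length subobject splits off $M$'' to ``every finite-length subobject is semisimple'' you invoke the modular law, whereas the paper uses the socle decomposition $L = \soc(L)\oplus L'$ and kills $L'$ by finite length. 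Both devices are standard and equivalent in substance. One small wording slip, which doesn't affect correctness: you say the closed submodules of finite colength ``correspond to finite-length subobjects of $M$ viewed in $\Cc$,'' but under the duality it is the quotients $M/N$ (not the submodules $N$ themselves) that become finite-length subobjects in $\Cc$; the complement condition nonetheless translates exactly as you use it.
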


\begin{proof}
To begin, note that any module $M$ satisfying~(a) is a product of pseudocompact modules
and therefore is pseudocompact. Thus to prove the equivalence of the three conditions, we
may assume that $M$ is pseudocompact throughout. As mentioned
in Remark~\ref{rem:general pseudocompact}, the category $\Cc = {}_R \PC\op$ is locally finite
and Grothendieck. Let $M'$ denote the object in $\Cc$ ``opposite'' to $M$. Noting that products
in ${}_R \PC$ are the same as in ${}_R \TMod$, the three conditions on $M$ above translate to:
\begin{enumerate}[label=(\alph*$'$)]
\item $M'$ is a direct sum of simple objects in $\Cc$;
\item Every subobject of $M'$ is a direct summand of $M'$ in $\Cc$;
\item Every finite-length subobject of $M'$ is a direct summand of $M'$ in $\Cc$.
\end{enumerate}
The equivlence of~(a$'$) and~(b$'$) follows from well known generalizations of the usual 
module-theoretic argument to Grothendieck categories, as in~\cite[Section~V.6]{Stenstroem}.
Clearly (b$'$)$\Rightarrow$(c$'$). Finally, assume~(c$'$) holds; we will deduce~(a$'$).
Let $L$ be a finite-length subobject of $M'$. Because all subobjects of $L$ also have finite length, 
the hypothesis impiles that every subobject of $L$ is a direct summand of $M'$, including
$L$ itself, so that every subobject of $L$ is in fact a summand of $L$. Thus $L = \soc(L) \oplus L'$
for a subobject $L'$ with $\soc(L') = 0$. But $L'$ also has finite length, so that its socle being zero
means that $L' = 0$.
Thus $L = \soc(L)$ is a (direct) sum of simple objects. 
Since $\Cc$ is locally finite, $M'$ is the sum of all of its finite-length subobjects $L = \soc(L)$,
and we deduce that $M' = \soc(M')$ is a (direct) sum of simple objects.
\end{proof}

The lemma above raises an interesting question: to what extent is it possible to
characterize the structure of pro-discrete left modules over a topological ring $R$ in which
every closed submodule has a closed complement?

We now present the following infinite-dimensional version of the Wedderburn-Artin theorem
for left linearly topologized rings.

\begin{theorem}
\label{thm:WA}
Let $R$ be a topological ring. Then the following are equivalent:
\begin{enumerate}[label=(\alph*)]
\item $R$ is a product in ${}_R \TMod$ of simple discrete left modules;
\item $R$ is left pro-discrete and every closed left ideal of $R$ has a closed complement;
\item $R$ is left pro-discrete and every open left ideal of $R$ is a direct summand;
\item $R \cong \prod \End_{D_i}(V_i)$ as topological rings, where each $V_i$ is a right vector
space over a division ring $D_i$ and each $\End_{D_i}(V_i)$ is given the finite topology;
\item $R$ is left pesudocompact and $J_0(R)$ (equivalently, $J(R)$) is zero;
\item $R$ is left linearly compact and $J_0(R)$ (equivalently, $J(R)$) is zero;
\item $R$ is left pseudocompact and the abelian category ${}_R \PC$ (equivalently, ${}_R \PC\op$)
has all short exact sequences split (i.e., the category is spectral);
\item $R$ is left pseudocompact and every object in ${}_R \PC$ is a product of simple objects
(equivalently, every object in ${}_R \PC\op$ is a direct sum of simple objects, i.e., ${}_R \PC\op$
is a semisimple category).
\end{enumerate}
\end{theorem}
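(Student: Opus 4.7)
I plan to establish all equivalences through the main cycle (d)$\Rightarrow$(a)$\Rightarrow$(b)$\Rightarrow$(c)$\Rightarrow$(a), supplemented by the independent implications (a)$\Rightarrow$(e), (e)$\Leftrightarrow$(f), (a)$\Leftrightarrow$(g)$\Leftrightarrow$(h), and the main obstacle (e)$\Rightarrow$(d). The heart of the argument is the topological Wedderburn-Artin description (d)$\Leftrightarrow$(a), while the structural implication (e)$\Rightarrow$(d) is substantially harder and will be handled by citation as flagged in the reader's guide.

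For (d)$\Rightarrow$(a), I fix a basis $\mathcal{B}_i$ of each $V_i$ over $D_i$; evaluation $T \mapsto (T(b))_{b \in \mathcal{B}_i}$ realizes a topological isomorphism $\End_{D_i}(V_i) \cong V_i^{\mathcal{B}_i}$ of left $\End_{D_i}(V_i)$-modules (this is exactly the column decomposition of a matrix, and is a homeomorphism by the very definition of the finite topology), and each $V_i$ is a simple discrete left module over $\End_{D_i}(V_i)$. Taking the product yields (a). Then (a)$\Rightarrow$(b) follows from Lemma~\ref{lem:complements} applied to ${}_R R$ (which is pseudocompact as a product of discrete finite-length modules), and (b)$\Rightarrow$(c) is immediate because every open submodule is closed by Lemma~\ref{lem:open}. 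For (c)$\Rightarrow$(a), given an open left ideal $I$ with decomposition $R = I \oplus I'$ (where $I'$ is cyclic and discrete, since $I \cap I' = 0$ is open in $I'$), I plan to show every submodule $N \subseteq I'$ is a summand of $I'$: the left ideal $I + N$ is also open by Lemma~\ref{lem:open}, so (c) produces $R = (I+N) \oplus M = I \oplus N \oplus M$, and the projection of $M$ into $I'$ along $I$ gives a submodule $M' \subseteq I'$ with $I' = N \oplus M'$ (the directness of this sum follows from $I \cap (N+M) = 0$). Hence $I'$ is semisimple, and being cyclic, it has finite length; thus $R$ is pseudocompact and Lemma~\ref{lem:complements} delivers (a).

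For (a)$\Rightarrow$(e), pseudocompactness is immediate from (a), and $J_0(R)=0$ follows from Lemma~\ref{lem:complements} since every open maximal left ideal splits off a simple summand, whose intersection over all such ideals is zero by Hausdorffness. The equivalence (e)$\Leftrightarrow$(f) combines Corollary~\ref{cor:pseudocompact is linearly compact} (pseudocompact implies linearly compact) with the observation that once (d) is established, each $\End_{D_i}(V_i)$ is pseudocompact by Example~\ref{e.WA}, so the product is pseudocompact. For (a)$\Leftrightarrow$(g)$\Leftrightarrow$(h), I use the Gabriel duality of Remark~\ref{rem:general pseudocompact}: ${}_R\PC\op$ is a locally finite Grothendieck category, in which the standard trio ``semisimple category'' $\Leftrightarrow$ ``every short exact sequence splits'' $\Leftrightarrow$ ``every object is a coproduct of simples'' applies; the cogenerating role of discrete finite-length modules in ${}_R\PC$ (also from that remark) allows the property at $R$ alone to propagate to every object of ${}_R\PC$.

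The main obstacle is (e)$\Rightarrow$(d), which closes the cycle and requires genuine structure theory for pseudocompact Jacobson-semisimple rings. As indicated in the reader's guide, I plan to cite Gabriel's~\cite[Proposition~IV.12]{G} directly, or alternatively proceed via (e)$\Rightarrow$(f)$\Rightarrow$(d) through Warner's~\cite[Theorem~29.7]{Warner} on linearly compact Jacobson-semisimple rings. Aside from this structural citation, the main technical care is needed in two places: verifying the evaluation map in (d)$\Rightarrow$(a) is a \emph{topological} isomorphism (using the explicit definition of the finite topology on $\End_{D_i}(V_i)$ rather than just the algebraic bijection), and the promotion of pro-discreteness to pseudocompactness in (c)$\Rightarrow$(a) via the semisimple-plus-cyclic argument for $I'$ outlined above.
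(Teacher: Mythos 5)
Your overall plan mirrors the paper's quite closely: the cycle through (a)--(d), the implication (a)$\Rightarrow$(e) and the citation for (f)$\Rightarrow$(d), and the Gabriel-duality argument for (g) and (h). The (c)$\Rightarrow$(a) step is a correct but more roundabout version of the paper's: rather than constructing the explicit complement $I'$ and proving semisimplicity of $I'$ by the projection trick, the paper directly observes that, since every left ideal containing an open $L$ is open (Lemma~\ref{lem:open}) and thus a direct summand, the quotient $R/L$ is semisimple and, being finitely generated, of finite length. Your argument arrives at the same place and is valid, just longer.

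Two points require repair. First, the justification of (a)$\Rightarrow$(e) as ``intersection over all such ideals is zero by Hausdorffness'' is off: Hausdorffness plus a neighborhood base of open submodules gives that the intersection of \emph{all open left ideals} is zero, not that the intersection of all open \emph{maximal} left ideals is zero. What is actually needed (and what the paper does) is to note that, writing $R = \prod_{i} S_i$, the kernels of the canonical projections $R \twoheadrightarrow S_i$ are open maximal left ideals with intersection zero, so $J_0(R) = 0$; the appeal to Lemma~\ref{lem:complements} in your phrasing is not the right tool here.

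The more serious gap is in (a)$\Rightarrow$(h). The claim that ``the cogenerating role of discrete finite-length modules ... allows the property at $R$ alone to propagate to every object'' is not by itself an argument: knowing that $R$ is a product of simples and that finite-length discrete modules cogenerate ${}_R\PC$ does not immediately yield that an arbitrary pseudocompact module is a product of simples. The paper's proof hinges on a nontrivial input you do not mention: by Gabriel's \cite[Corollaire~IV.1]{G}, $R$ is a \emph{projective} object of ${}_R\PC$. Combined with (a)$\Rightarrow$(c), this shows every simple discrete module $R/U$ is a direct summand of a projective and hence projective; dually, every simple object of $\Cc = {}_R\PC\op$ is injective. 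Since $\Cc$ is locally noetherian, direct sums of injectives are injective \cite[Proposition~V.4.3]{Stenstroem}, so $\soc(X)$ is injective for every $X$, splitting off as $X = \soc(X) \oplus X'$ with $\soc(X') = 0$, and local finiteness forces $X' = 0$. An alternate route you might have in mind---first showing every finite-length discrete $R$-module is semisimple (because for any open $L$, $L \supseteq \prod_{i \notin F} S_i$ for finite $F$, so $R/L$ is a quotient of $\prod_{i \in F} S_i$), then using local finiteness in $\Cc$---would also work, but either way the step needs an actual argument rather than an appeal to cogeneration alone.
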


\begin{proof}
That (b)$\Rightarrow$(c) follows from the fact that every open left ideal is closed. Assuming that either
condition~(a) or~(c) holds, we will show that $R$ is left pseudocompact; it will then follow from 
Lemma~\ref{lem:complements} that (c)$\Rightarrow$(a)$\Rightarrow$(b). If $R$ satisfies
condition~(a), so that $R = \prod_{i \in I} S_i$ for simple discrete left modules $S_i$, then 
$R$ is a product (hence inverse limit) of discrete modules and therefore is left pro-discrete 
by Lemma~\ref{lem:prodiscrete}. Furthermore,
$R$ has a neighborhood basis of~$0$ consisting of open left ideals $L$ for which $R/L$ is a 
finite direct product of the simple $S_i$; these are the kernels of the projections $\prod_{i \in I} S_i
\twoheadrightarrow \prod_{j \in J} S_j$ for any finite subset $J \subseteq I$.
Thus $R$ is pseudocompact. 
Similarly assume~(c) holds, and let $L$ be an open left ideal of $R$. Since every left ideal 
containing $L$ is also open, and consequently a direct summand, we see that $R/L$ is 
a semisimple $R$-module. This semisimple module $R/L$ is finitely generated, and therefore
has finite length. By Lemma~\ref{lem:pseudocompact}, $R$ is left pseudocompact as claimed.

Again assume that~(a) holds. We have established that $R = \prod S_i$ is left pseudocompact 
in this case, and the intersection of the open maximal left ideals corresponding to
the kernel of each canonical projection $R \twoheadrightarrow S_i$ is zero. Thus $J_0(R) = 0$
(which is equivalent to $J(R) = 0$ by Lemma~\ref{thm:WA.jq}), establishing (a)$\Rightarrow$(e).
Also (e)$\Rightarrow$(f) by Corollary~\ref{cor:pseudocompact is linearly compact}, 
and (f)$\Rightarrow$(d) follows from the characterization of linearly compact, Jacobson
semisimple rings given in~\cite[Theorem~29.7]{Warner}. 
For (d)$\Rightarrow$(a), it suffices to show that any ring of the form $\End(V)$, for $V_D$ a vector 
space over a division ring, is a product of simple discrete modules. Fix a basis $\{v_i \mid i \in I\}$ 
of $V$ and let $E_i$ denote the projection onto $\Span(v_i)$ with kernel spanned by $\{v_j\}_{j \neq i}$. 
Then $\End(V) \cong \prod \End(V) E_i \cong \prod_{i \in I} V$ is a product of simple discrete left 
modules in ${}_R \TMod$.

Assume that~(a) holds, so that once more $R$ is left pseudocompact; we will verify~(h). 
The simple objects of
${}_R \PC$ are precisely the simple discrete modules in ${}_R \TMod$, which are all of the
form $R/U$ for an open maximal left ideal $U$ of $R$, thanks to Lemma~\ref{lem:open}. Since we
have already shown (a)$\Rightarrow$(c), we see that every such $U$ is a direct summand. 
Noting from~\cite[Corollaire~IV.1]{G} that $R$ is a projective object in ${}_R \PC$, because
$R \cong R/U \oplus U$ we find that the direct summand $R/U$ is projective in ${}_R \PC$.
So every simple object in ${}_R \PC$ is projective, and dually we have that every simple object
in the Grothendieck category $\Cc = {}_R \PC\op$ is injective. Because $\Cc$ is (locally finite 
and therefore) locally noetherian, injectives are closed under direct 
sums~\cite[Proposition~V.4.3]{Stenstroem}.  Then every simisimple object in $\Cc$, being a
direct sum of simple (hence injective) objects, must be injective.
Thus for every object $X$ in $\Cc$, the socle $\soc(X)$ is injective and therefore is 
a direct summand of $X$, so that $X = \soc(X) \oplus X'$ for some subobject $X'$ of $X$.
This $X'$ cannot have any simple subobjects as it intersects $\soc(X)$ trivially. Because $\Cc$
is locally finite, every nonzero object has a nonzero socle, from which we deduce that $X' = 0$. 
Thus $X = \soc(X)$ is a direct sum of simple objects (dually, every object of ${}_R \PC$ is a product 
of simple objects), establishing~(h). 
Conversely, if~(h) holds then $R$ is an object of ${}_R \PC$ and thus is a product of
simple objects in ${}_R \PC$. That is to say, $R$ is a product of simple discrete left 
modules, and~(a) holds. 

For (h)$\Rightarrow$(g), note that in the Grothendieck category $\Cc = {}_R \PC\op$, all objects
are semisimple, and therefore all short exact sequences split by essentially the
same argument as in the case of a module category~\cite[Section~V.6]{Stenstroem}.
Conversely, if~(g) holds and $X$ is an object of $\Cc$, then the monomorphism
$\soc(X) \hookrightarrow X$ splits, so that $X = \soc(X) \oplus X'$ for some subobject $X'$ of
$X$. Just as in the previous paragraph, local finiteness of $\Cc$ implies that $X' = 0$. Thus
$X = \soc(X)$, so that~(h) holds.
\end{proof}

The algebras above provide a suitable substitute for semisimple rings in the context of
topological algebra in which we find ourselves. Thus we introduce the following terminology.

\begin{definition}\label{def:WA}
A topological ring is called \emph{left pseudocompact semisimple} if it satisfies the 
equivalent conditions of the previous theorem. 
\end{definition}

\subsection{Some applications of the Wedderburn-Artin theorem}
We now present some special cases of Theorem~\ref{thm:WA}, some of which recover earlier
results on semisimplicity in the literature.

To begin, we note that Theorem~\ref{thm:WA} can be indeed considered as an infinite generalization
of the classical Wedderburn-Artin theorem. For if $R$ is a left artinian ring, then it is left pseudocompact
when equipped with its discrete topology. So a semisimple (left) artinian ring is left pseudocompact and
Jacobson semisimple. In the decomposition $R \cong \prod_{i\in I}\End_{D_i}(V_i)$, the topology on $R$
is discrete if and only if $I$ is finite and each $V_i$ is finite-dimensional. Also, viewing $R$ as
a product of simple discrete left $R$-modules, we again have that $R$ carries the discrete topology if
and only if this set of simple modules is finite.

In the situation of a left and right pseudocompact ring $R$, there is a more refined version of 
the structure theorem above. This will be presented after the following lemma that 
extends~\cite[Lemma 2.5]{IC}, which is the analogous statement for the case where the dual 
category ${}_R \PC$ is locally finite-dimensional over a fixed basefield. We will only need to use 
it in the pseudocompact semisimple case, but we state it in full generality and include details.

\begin{lemma}\label{lem:WA.sum}
Let $R$ be a topological ring, $(M_i)_{i\in I}$ be pseudocompact left modules and let 
$P=\prod_i M_i$.  Assume that for every simple pseudocompact left $R$-module $S$, only finitely 
many $M_i$ have $S$ as a quotient in ${}_R \PC$ (in particular, this is true if the intersection of all 
open $M\subseteq P$ for which  $P/M\cong S$ is of finite colength in $P$). Then the coproduct
of the family $(M_i)_i$ in ${}_R \PC$ is $P$, with the obvious canonical maps.
\end{lemma}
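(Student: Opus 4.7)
The plan is to exhibit $P = \prod_i M_i$ together with its natural coordinate inclusions as the coproduct of the $M_i$ in ${}_R \PC$. First, I would note that $P$ is itself pseudocompact, since by Lemma~\ref{lem:pseudocompact} a product of inverse limits of discrete finite-length modules is again such a limit. Define the canonical maps $\iota_j \colon M_j \to P$ by $\iota_j(m) = (\delta_{ij} m)_{i \in I}$; these are continuous $R$-module homomorphisms satisfying $\pi_k \iota_j = \delta_{jk}\, \id_{M_j}$. Given any $N \in {}_R \PC$ and continuous $R$-module homomorphisms $f_j \colon M_j \to N$, my task is to construct a unique continuous $f \colon P \to N$ with $f\iota_j = f_j$ for every $j$.

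The key step is to define $f$ modulo each open submodule $U \subseteq N$ of finite colength, then assemble using $N = \varprojlim N/U$. Since $N/U$ has finite length, only finitely many simple modules $S_1, \ldots, S_n$ appear as its composition factors. For each $j$, the image of the composite $M_j \to N \twoheadrightarrow N/U$ is simultaneously a quotient of $M_j$ and a submodule of $N/U$; if nonzero, its simple top must be one of the $S_k$, and this $S_k$ is then a quotient of $M_j$ in ${}_R \PC$. By the hypothesis of the lemma, only finitely many indices $j$ have any $S_k$ as a quotient; I collect these into a finite set $J_U \subseteq I$, so that $f_j(M_j) \subseteq U$ for every $j \notin J_U$. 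I then set
\[
f_U \colon P \longrightarrow N/U, \qquad f_U(x) \;=\; \sum_{j \in J_U} (\pi_U \circ f_j)(x_j),
\]
where $\pi_U \colon N \twoheadrightarrow N/U$ is the quotient map. This is continuous as a finite sum of compositions of continuous maps.

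Next, I would check that the family $(f_U)_U$ is compatible with the connecting maps of the inverse system $N = \varprojlim N/U$: if $U \subseteq U'$ then $J_{U'} \subseteq J_U$ (composition factors of $N/U'$ are composition factors of $N/U$), and terms indexed by $j \in J_U \setminus J_{U'}$ vanish modulo $U'$ because $f_j(M_j) \subseteq U'$ for such $j$. The resulting continuous $f \colon P \to N$ then satisfies $f_U \iota_k \equiv f_k \pmod{U}$ for all $k$ and $U$, yielding $f \iota_k = f_k$. For uniqueness, I would observe that $\bigoplus_j \iota_j(M_j)$ is dense in $P$: any basic open neighborhood of a point $(x_i) \in P$ constrains only finitely many coordinates, which can be matched by a finitely supported tuple. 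Since $N$ is Hausdorff, any two continuous extensions of the $f_j$ must coincide on $P$.

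The main obstacle I anticipate is the argument pinning down the finite set $J_U$ and ruling out nonzero contributions from indices outside it; everything downstream is standard inverse-limit bookkeeping. For the parenthetical sufficient condition, I would note that if infinitely many $M_j$ admitted some fixed simple $S$ as a quotient in ${}_R \PC$, then pulling back the corresponding kernels along the projections $\pi_j \colon P \to M_j$ would yield infinitely many open submodules $U_j \subseteq P$ with $P/U_j \cong S$, and the induced morphism $P \to \prod_j S$ would force $\bigcap_j U_j$ to have infinite colength in $P$, contradicting the stated hypothesis.
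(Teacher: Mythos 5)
Your proof is correct and reaches the same conclusion via a genuinely different assembly mechanism. The paper first restricts attention to the dense algebraic direct sum $\Sigma = \bigoplus_i M_i \subseteq P$: it defines $f$ on $\Sigma$ by the universal property of direct sums of modules, proves that $f$ is \emph{continuous} on $\Sigma$ (this is where the finiteness hypothesis is used, in exactly the same way you use it: for each open $H \subseteq N$, only finitely many $M_i$ can map outside $H$ because otherwise infinitely many $M_i$ would share a composition factor of $N/H$ as a quotient), and then extends $f$ to $P$ by a Cauchy-net completion argument, appealing to density of $\Sigma$ and completeness of $N$. You instead construct the compatible system of continuous maps $f_U \colon P \to N/U$ directly on all of $P$ and glue them using $N = \varprojlim N/U$, which sidesteps the explicit net-extension step entirely. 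Both approaches isolate the same key step — the finite support set $J_U$ and the vanishing $f_j(M_j) \subseteq U$ for $j \notin J_U$ — and both use density of $\bigoplus \iota_j(M_j)$ for uniqueness. Your route is slightly more structural (it exploits the pro-discrete presentation of $N$ up front), while the paper's is slightly more hands-on (it exhibits the extension concretely via limits of Cauchy nets). One small imprecision: your stated reason that $P$ is pseudocompact — ``a product of inverse limits of discrete finite-length modules is again such a limit'' — is not quite right as phrased, since the naive termwise products of the finite-length approximants need not have finite length; the correct and elementary justification is that the basic open submodules of $P$ of the form $\prod_{i \in F} U_i \times \prod_{i \notin F} M_i$ (with $F$ finite, $U_i$ open of finite colength in $M_i$) already have finite colength, so condition (a) of Lemma~\ref{lem:pseudocompact} applies directly.
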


\begin{proof}
We will show that $P$ with the canonical morphisms $\sigma_i \colon M_i \hookrightarrow P$ 
satisfies the universal property of the coproduct in ${}_R \PC$. 
Let $\Sigma=\bigoplus_i M_i$ denote the usual direct sum of the family $(M_i)_i$, forming the 
coproduct in the category of left $R$-modules. Regard $\Sigma \subseteq P$ as a submodule 
of the product.

Let $N$ be a pseudocompact module with continuous maps $f_i:M_i\rightarrow N$, and let 
$f \colon \Sigma \rightarrow N$ be the unique canonical $R$-module map with $f=f_i$ on $M_i$ 
(i.e., $f \sigma_i = f_i$). We will first show that $f$ is continuous (with respect to the topology on
$\Sigma$ inherited from $P$), then that $f$ extends to a continuous homomorphism
$P \to N$ satisfying the desired universal property. 

Because $f$ is linear, to show that $f$ is continuous it suffices to show that, for any open submodule
$H$ of $N$, $f^{-1}(H)$ is open in $M$.
We claim that  $M_i\subset f^{-1}(H)$
for all but finitely many $i$. Indeed, let $\Ss$ be the set of simple modules that occur in the 
Jordan-H${\rm\ddot{o}}$lder series of $N/H$. These simple modules are discrete and therefore
belong to ${}_R \PC$. 
If $M_i$ is such that $M_i\nsubseteq f^{-1}(H)$ then $f^{-1}(H)\cap M_i = f_i^{-1}(H)$ is 
open (by continuity of $f_i$) and proper in $M_i$. Now the composite map 
\[
M_i \overset{f_i}{\longrightarrow} N \twoheadrightarrow N/H
\] 
has image isomorphic to $M_i/f_i^{-1}(H) = M_i/(f^{-1}(H)\cap M_i)$, making the latter a nonzero 
module that embeds in $N/H$. 
Being nonzero, this finite-length module has some simple quotients that lie in the family
$\Ss$. We conclude that $M_i$ has some simple module $S\in\Ss$ as a quotient. 
Because $\Ss$ is finite, the hypothesis on the $M_i$ ensures that we can have at most finitely many 
$M_i \nsubseteq f^{-1}(M)$, as claimed.
Thus $f^{-1}(H)$ contains $\bigoplus_{i\notin F}M_i$ for some finite subset $F\subseteq I$, from
which it readily follows that  $f^{-1}(H)\supseteq \Sigma\cap \left(\prod_{i\in F}f_i^{-1}(H) 
\times \prod_{i\notin F}M_i\right)$. The latter submodule is open in $\Sigma$ (being the restriction
of an open submodule of $P$), so that $f^{-1}(H)$ is open by Lemma~\ref{lem:open}. 
Hence $f$ is continuous.

Note that every element $x \in P$ of the product is the limit of the Cauchy net $(x_J)$ in $\Sigma$ 
indexed by the finite subsets $J \subseteq I$, where $x_J$ has the same entries as $x$ at each index
in $J$ and all other entries zero. (In particular, $\Sigma$ is dense in $P$.) Because $f \colon \Sigma \to N$
is linear and continuous, each such Cauchy net $(x_J)$ is mapped to a Cauchy net $(f(x_J))$ in $N$.
Setting $\overline{f}(x) = \lim_J f(x_J)$ to be the limit of this net, one may verify that
$\overline{f} \colon P \to N$ is $R$-linear (using the fact that the assignment $x \mapsto (x_J)$ is 
$R$-linear) and continuous (see also~\cite[Theorem~7.19]{Warner}). 
This map satisfies $\overline{f} \circ \sigma_i = f_i$ for all $i$. Further, it is the unique continuous
homomorphism satisfying this condition: any such map restricts to $f$ on the dense subset
$\Sigma \subseteq P$, and a continuous function $P \to N$ is uniquely determined by its restriction to 
a dense subset thanks to the Hausdorff property of $N$.
\end{proof}

\begin{theorem}
\label{thm:WA.1}
Let $R$ be a topological ring. The following assertions are equivalent:
\begin{enumerate}[label=(\alph*)]
\item $R = \prod_{i \in I} S_i^{\, n_i}$ in ${}_R \TMod$ for some pairwise non-isomorphic
discrete simple left $R$-modules $S_i$ and integers $n_i \geq 1$;
\item $R$ is pseudocompact and ${}_R R$ is a coproduct of simple objects in the category ${}_R \PC$; 
\item $R \cong \prod_i \M_{n_i}(D_i)$ as topologial rings for some division rings $D_i$ 
and integers $n_i \geq 1$, where each matrix ring is given the discrete topology;
\item $R$ is pro-discrete and every open (respectively, closed) left ideal has a (closed)
complement;
\item $R$ is left pseudocompact semisimple and right linearly topologized;
\item $R$ is pseudocompact and $J(R)$ (equivalently, $J_0(R)$) is zero;
\item $R$ is linearly compact and $J(R)$ (equivalently, $J_0(R)$) is zero;
\item The left-right symmetric statements of (a), (b), (d), and (e).
\end{enumerate}
\end{theorem}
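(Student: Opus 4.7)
The plan is to use Theorem~\ref{thm:WA} together with Remark~\ref{rem:WA.not} to recognize that the additional conditions in (c)--(g) above left pseudocompact semisimplicity all amount to forcing each $V_i$ in the decomposition $R \cong \prod \End_{D_i}(V_i)$ to be finite-dimensional, so that each $\End_{D_i}(V_i) \cong \M_{n_i}(D_i)$ becomes discrete. Thus one ``ray'' of equivalences would go through Theorem~\ref{thm:WA} plus finite-dimensionality of the $V_i$, and the rest follows by matching decompositions.

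First I would establish (a) $\Leftrightarrow$ (b) $\Leftrightarrow$ (c) directly. From (c) one decomposes each $\M_{n_i}(D_i) \cong V_i^{n_i}$ as a left module, where $V_i = D_i^{n_i}$ is the simple column module, to obtain (a). Conversely, from (a), Theorem~\ref{thm:WA}(a) already gives a decomposition $R \cong \prod_j \End_{D_j}(W_j)$; identifying the isotypic components across the two decompositions, the hypothesis that $n_i$ is an integer forces each $\dim W_j$ to be finite. The equivalence (a) $\Leftrightarrow$ (b) uses Lemma~\ref{lem:WA.sum}: since each simple discrete module $S_i$ appears as a quotient of only the one factor $S_i^{n_i}$ in the product $\prod_i S_i^{n_i}$, the hypothesis of that lemma is satisfied, and the topological product coincides with the coproduct in ${}_R\PC$ of the simple objects $S_i$ (each repeated $n_i$ times).

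Next I would show that each of (d), (e), (f), (g) reduces, via Theorem~\ref{thm:WA}, to left pseudocompact semisimplicity augmented by a right-sided structural condition that forces every $V_i$ to be finite-dimensional. For (d), being pro-discrete gives both left and right pro-discreteness by Remark~\ref{rem:left and right prodiscrete}; the complement condition plus Lemma~\ref{lem:complements} yields left pseudocompact semisimplicity, hence the decomposition $R \cong \prod \End_{D_i}(V_i)$, and then right pro-discreteness descends to each factor (open right ideals of a topological product of rings being determined by open right ideals in each coordinate together with the full ring in almost all coordinates), so Remark~\ref{rem:WA.not} rules out any infinite-dimensional $V_i$. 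For (e), left pseudocompact semisimplicity gives the same decomposition, and since left pseudocompactness ensures Hausdorffness and completeness, the right linear topology upgrades to right pro-discreteness, and Remark~\ref{rem:WA.not} applies again. For (f), pseudocompactness implies left pseudocompactness, so Theorem~\ref{thm:WA}(e) gives the decomposition; then the right pseudocompactness of the product descends componentwise, and once more each $V_i$ must be finite-dimensional. For (g), apply Corollary~\ref{cor:pseudocompact is linearly compact} together with Theorem~\ref{thm:WA}(f).

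Finally, the converse direction — that (c) implies each of (d)--(g) — is immediate, since a product of discrete simple artinian rings $\M_{n_i}(D_i)$ is manifestly two-sided pseudocompact (hence two-sided linearly compact and pro-discrete), has Jacobson radical zero, and every open (left or right) ideal is a direct summand. The symmetric statements in (h) follow because condition~(c) is manifestly left-right symmetric (the opposite ring of $\prod \M_{n_i}(D_i)$ is $\prod \M_{n_i}(D_i\op)$, still of the same form), so the left-sided characterizations (a), (b), (d), (e) characterize the same class of rings as their right-sided analogues. The main obstacle I expect is the componentwise descent argument in (d)--(g): one must verify carefully that a right pseudocompact (or right pro-discrete, or right linearly compact) structure on $\prod \End_{D_i}(V_i)$ really does force each factor to share that property, so that Remark~\ref{rem:WA.not} can be invoked. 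Once this topological bookkeeping is in hand, the remainder of the proof is routine.
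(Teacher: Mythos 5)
Your overall structure is sound and close in spirit to the paper's, but you take a genuinely different route for the core cycle.

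The paper closes the loop (a)$\Rightarrow$(b)$\Rightarrow$(e)$\Rightarrow$(c) rather than proving (a)$\Rightarrow$(c) directly. Its (a)$\Rightarrow$(b) step is the nontrivial one: it shows the kernels $J_i$ of the projections $R \twoheadrightarrow S_i^{n_i}$ are \emph{two-sided} by observing that $J_i$ is a coproduct of isotypic components, that right multiplication by any $r \in R$ is a continuous left-module endomorphism, and that isotypic components are preserved by module endomorphisms; this gives $J_i r \subseteq J_i$, hence $R/J_i$ is semisimple artinian and $R$ is (two-sided) pseudocompact. Your (a)$\Leftrightarrow$(b) sketch addresses only the coproduct half of (b) via Lemma~\ref{lem:WA.sum}; it never establishes two-sided pseudocompactness, which is the actual content of the paper's argument. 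Your (a)$\Rightarrow$(c) shortcut (via uniqueness of isotypic decompositions in the locally finite Grothendieck category ${}_R\PC\op$, forcing each $\dim W_j$ to equal some $n_i$) is a valid alternative that would let you deduce two-sided pseudocompactness of $R$ from the product of matrix rings -- but then you should say explicitly that (a)$\Rightarrow$(b) is obtained \emph{via} (c), because as written the (a)$\Leftrightarrow$(b) claim is incomplete. Note that your route pays for avoiding the isotypic-invariance argument by requiring the well-definedness of multiplicities in a semisimple decomposition, which the paper never needs.

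Your treatment of (d)--(g) matches the paper's: in each case reduce to Theorem~\ref{thm:WA} to get $R \cong \prod \End_{D_i}(V_i)$, observe that right linear topologization descends to each factor, conclude each factor is two-sided pro-discrete (using left pro-discreteness plus completeness/Hausdorffness), and invoke Remark~\ref{rem:WA.not}. You correctly flag the componentwise descent as the main bookkeeping step; the paper asserts it in one line. One small miscitation: for (g)$\Rightarrow$(c), Corollary~\ref{cor:pseudocompact is linearly compact} points the wrong way (pseudocompact $\Rightarrow$ linearly compact); what you need is simply that (g) gives Theorem~\ref{thm:WA}(f) directly, plus the same right-sided descent. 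The treatment of (h) via left-right symmetry of (c) is fine; the paper instead notes that (f) and (g) are manifestly symmetric, which is marginally shorter.
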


\begin{proof}
First note that if~(c) holds, then $R$ is a product of pseudocompact semisimple (hence pro-discrete 
and linearly compact) rings. Thus~(c) implies all of~(d), (e), (f), and~(g). Conversely, if any one of
the conditions~(d)--(g) holds, then $R$ is right linearly topologized and by 
Theorem~\ref{thm:WA} we have $R \cong \prod_{i \in I} \End_{D_i}(V_i)$ as topological rings for 
some right vector spaces $V_i$ over divsion rings $D_i$. In particular, each of the factors 
$\End_{D_i}(V_i)$ in the product is right linearly topologized.
It follows from Remark~\ref{rem:WA.not} that each $V_i$ has finite dimension (say 
$n_i = \dim_{D_i}(V_i)$), in which case the finite topology on $\End_{D_i}(V_i) \cong \M_{n_i}(D_i)$ 
is discrete.
This establishes the equivalence of~(c)--(g).

Next, assume that~(c) holds. Let $S_i = D_i^{n_i}$, which is a simple discrete right $R$-module
via the projection $\pi_i \colon R \twoheadrightarrow \M_{n_i}(D_i)$. It is straightforward to show 
that the annihilator of ${}_R S_i$ is the kernel of the projection $\pi_i$. As $\ker(\pi_i) \neq \ker(\pi_j)$ 
for distinct $i,j \in J$, we deduce that the $S_i$ are pairwise non-isomorphic.
Then because each $\M_{n_i}(D_i) \cong S_i^{n_i}$ as left $R$-modules, and because the isomorphism in~(c) is also an isomorphism in ${}_R \TMod$, we find that (c)$\Rightarrow$(a). 

Next let $R$ satisfy~(a); we will deduce~(b). Certainly such $R$ is left 
pseudocompact semisimple, and in ${}_R \PC$ we have $R = \coprod S_i^{\, n_i}$ is a 
coproduct of simple objects according to Lemma~\ref{lem:WA.sum}. 
To see that $R$ is right pseudocompact, let $J_i \subseteq R$ denote the kernel of the canonical
projection $R \twoheadrightarrow S_i^{\,n_i}$. 
Each $J_i$ is an open left ideal of finite colength, and the $J_i$ form a basis of open neighborhoods 
of zero thanks to the structure of $R$.
It suffices to show that each $J_i$ is also a right ideal, for then $R/J_i$ is a semisimple artinian
ring. 
To this end, fix $r \in R$ and let $\rho \colon R \to R$ be the continuous endomorphism defined 
by $\rho(x) = xr$. Note that each $S_j^{n_j}$ is the isotypic component of $R$ corresponding to $S_j$
(i.e., the sum of all ${}_R \PC$-subobjects of $R$ that are isomorphic to $S_j$).  
It is clear from the construction of $J_i$ that $J_i = \coprod_{j \neq i} S_j^{\,n_j}$ is the coproduct
in ${}_R \PC$ of the isotypic components of $R$ corresponding to the simple objects $S_j$ with $j \neq i$. 
Since each isotypic component $S_j^{\, n_j}$ is invariant under $\rho$, the same is true of $J_i$.
It follows that $J_i r = \rho(J_i) \subseteq J_i$, making $J_i$ a right ideal as desired. 

For (b)$\Rightarrow$(e), suppose $R$ is pseudocompact and $R = \coprod L_t$ is a coproduct
of simple objects in ${}_R \PC$. Certainly $R$ is right linearly topologized. To see that
it is Jacobson semisimple,  let $M_t$ denote the open maximal left ideal that is the kernel 
of the projection $R \to L_t$.
It follows from $R = \coprod L_t$ that $\bigcap M_t = 0$, so that $J(R) = 0$. 

This establishes the equivalence of conditions (a)--(g).
Finally, condition~(h) is equivalent to the rest because properties~(f) and~(g) are left-right symmetric.
\end{proof}

We note that the equivalence (b)$\Leftrightarrow$(g) above recovers~\cite[Theorem 1]{Z}, which in
turn generalized \cite[Theorem~16]{K} using the methods of~\cite{Z1}.

We will also briefly demonstrate that the above results can be particularized to yield the characterization of cosemisimple 
coalgebras. For the definitions of coalgebras and their comodules we refer the reader to \cite{DNR, R}.

The following is a more refined notion of pseudocompactness for $\K$-algebras, which is satisfied both
by the diagonalizable algebras that we consider in Section~\ref{sec:diagonal}, as well as dual algebras
of coalgebras. 

\begin{definition}\label{def:K-pseudocompact}
Given a topological algebra $A$ over a field $\K$, we say that a left topological $A$-module $M$ is 
\emph{$\K$-pseudocompact} if it is pseudocompact and its open submodules have finite 
$\K$-codimension. We say  that $A$ is \emph{left $\K$-pseudocompact} (or 
\emph{left pseudocompact as a $\K$-algebra}) 
if ${}_A A$ is $\K$-pseudocompact. (Right $\K$-pseudocompact algebras are defined similarly.) We say 
that $A$ is \emph{$\K$-pseudocompact} if it pseudocompact and every open ideal of $A$ has finite
$\K$-codimension.
\end{definition}

Because a module of finite length over an algebra is finite-dimensional if and only if all of its simple
composition factors are finite-dimensional, one can see that a topological algebra $A$ is left
$\K$-pseudocompact if and only if it is a left pseudocompact topological ring and every simple discrete
left $A$-module has finite $\K$-dimension. 

Note that, as in Remark~\ref{rem:left and right pseudocompact}, a topological $\K$-algebra is 
$\K$-pseudocompact if and only if it is both left and right $\K$-pseudocompact.
If $A$ is a left $\K$-pseudocompact algebra, then every left pseudocompact $A$-module is 
$\K$-pseudocompact, so that ${}_A \PC$ coincides with the category of $\K$-pseudocompact 
left $A$-modules (with continuous module homomorphisms).

The Wedderburn-Artin theorems proved above restrict to $\K$-pseudocompact algebras in the following way.

\begin{proposition}\label{prop:WA fin dim}
Let $A$ be a topological $\K$-algebra. Then the following are equivalent:
\begin{enumerate}[label=(\alph*)]
\item $A$ is left $\K$-pseudocompact and Jacobson semisimple;
\item $A$ is $\K$-pseudocompact and Jacobson semisimple;
\item $A \cong \prod_{i \in I} \M_n(D_i)$ as topological algebras, where each $D_i$ is
a finite-dimensional division algebra over $\K$ and each $\M_n(D_i)$ is given the discrete 
topology.
\end{enumerate}
\end{proposition}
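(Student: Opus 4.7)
The plan is to derive (a)$\Leftrightarrow$(b)$\Leftrightarrow$(c) by combining Theorem~\ref{thm:WA} (the infinite Wedderburn-Artin theorem) with Definition~\ref{def:K-pseudocompact}, using the characterization of left $\K$-pseudocompactness via the $\K$-dimensions of simple discrete modules noted just after that definition.

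The easy direction (b)$\Rightarrow$(a) is tautological. For (c)$\Rightarrow$(b): since each $D_i$ is finite-dimensional over $\K$, so is each $\M_{n_i}(D_i)$; hence the product $A = \prod_i \M_{n_i}(D_i)$ has a neighborhood basis of zero consisting of kernels of projections onto finite subproducts, and these are open two-sided ideals of finite $\K$-codimension, verifying $\K$-pseudocompactness. Jacobson semisimplicity follows from Theorem~\ref{thm:WA.jq}(3), because each factor $\M_{n_i}(D_i)$ is simple artinian and thus contributes $0$ to the inverse limit description of $J(A)$.

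The substantive direction is (a)$\Rightarrow$(c). Applying Theorem~\ref{thm:WA} to the left pseudocompact, Jacobson semisimple ring $A$ produces a topological isomorphism $A \cong \prod_{i \in I} \End_{D_i}(V_i)$ in which each $V_i$ is a right $D_i$-vector space and each factor carries the finite topology. The $V_i$ are precisely the simple discrete left $A$-modules, so left $\K$-pseudocompactness forces $\dim_\K(V_i) < \infty$ for all $i$. The key step is then to show each $D_i$ is finite-dimensional over $\K$: the right action of $D_i$ on $V_i$ consists of $\K$-linear endomorphisms (because $\K \hookrightarrow A$ acts on $V_i$ through $D_i$-linear maps, so $\K$ and $D_i$ commute in their actions on $V_i$) and is faithful (as $D_i$ is a division ring and $V_i \neq 0$), giving an embedding of $\K$-vector spaces $D_i \hookrightarrow \End_\K(V_i)$ into a finite-dimensional target. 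Hence $\dim_\K(D_i) < \infty$, and consequently $n_i := \dim_{D_i}(V_i)$ is finite, so $\End_{D_i}(V_i) \cong \M_{n_i}(D_i)$.

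The only remaining point I expect to require care is the topological identification: on the finite-dimensional algebra $\End_{D_i}(V_i) \cong \M_{n_i}(D_i)$ the finite topology is discrete (its basic open neighborhood $V_i^\perp$ of $0$ is already $\{0\}$), so the topology on the product $\prod_i \M_{n_i}(D_i)$ inherited from Theorem~\ref{thm:WA} matches the product of discrete topologies claimed in~(c). This yields the desired topological isomorphism and completes (a)$\Rightarrow$(c).
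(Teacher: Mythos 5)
Your argument is correct and follows essentially the same route as the paper: decompose $A \cong \prod_{i} \End_{D_i}(V_i)$ via Theorem~\ref{thm:WA}, observe that the $V_i$ are the simple discrete left $A$-modules so left $\K$-pseudocompactness forces $\dim_\K V_i < \infty$, and conclude that each $D_i$ and $n_i = \dim_{D_i} V_i$ must be finite. Where the paper simply notes that $\dim_\K V_i \geq \dim_\K D_i$ (since $V_i \neq 0$ contains a $\K$-linear copy of $D_i$), you instead embed $D_i$ into the finite-dimensional $\K$-algebra $\End_\K(V_i)$; both are elementary variants of the same observation, and your version has the small merit of making explicit the point, left tacit in the paper, that $\K$ acts centrally through $Z(D_i)$ so $D_i$ really is a $\K$-algebra.
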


\begin{proof}
Clearly we have (c)$\Rightarrow$(b)$\Rightarrow$(a). Finally, assuming~(a) holds, it follows
from Theorem~\ref{thm:WA} that $A \cong \prod_{i \in I} \End_{D_i} (V_i)$ as topological
algebras for some division $\K$-algebras $D_i$ and right $D_i$-vector spaces $V_i$. Note
that each $V_i$ is a discrete simple left $A$-module via the projection
$A \twoheadrightarrow \End_{D_i} (V_i)$. If there exists $i$ such that either $D_i$ is infinite-dimensional 
over $\K$ or $V_i$ is infinite-dimensional over $D_i$, then the corresponding
discrete simple left $A$-module $V_i$ is infinite-dimensional over $\K$, contradicting left 
$\K$-pseudocompactness of $A$. Thus all of the $D_i$ are finite-dimensional $\K$-algebras
and all $V_i$ are finite-dimensional $D_i$-vector spaces. Now~(c) readily follows.
\end{proof}

As a corollary, using the duality between pseudocompact algebras and coalgebras, one can also 
deduce known results in the basic theory of coalgebras, namely, the characterization of cosemisimple 
coalgebras; see \cite[Theorem 3.1.5]{DNR}.

The category of $\K$-pseudocompact algebras (with continuous algebra homomorphisms) is in 
duality with the category of coalgebras over $\K$; we refer readers to~\cite{Simson} for details, but
we sketch the ideas here.
Given a coalgebra $C$, its $\K$-dual algebra $C^*$ is pseudocompact with open ideals being those 
of the following form, where $H$ is a finite-dimensional subcoalgebra of $C$:
\[
H^\perp = \{\phi \in C^* \mid \phi(H) = 0\}.
\]
Conversely, if $A$ is a $\K$-pseudocompact algebra, then one can form its ``finite dual'' coalgebra
\[
A^\circ = \varinjlim (A/I)^*,
\]
where $I$ ranges over the open ideals of $A$. The assignments $C \mapsto C^*$ and $A \mapsto A^\circ$
are functors that provide a duality between the categories of $\K$-coalgebras (with coalgebra morphisms)
and $\K$-pseudocompact algebras (with continuous homomorphisms)~\cite[Theorem~3.6]{Simson}.

Similarly, if $M$ is a left comodule over a coalgebra $C$, then its dual $M^*$ naturally carries the
structure of a left $C^*$-module. When $M^*$ is equipped with the topology whose open submodules are
of the form
\[
N^\perp = \{\phi \in M^* \mid \phi(N) = 0\}
\]
for finite-dimensional subcomodules $N \subseteq M$, it becomes a $\K$-pseudocompact left
$C^*$-module~\cite[Corollary~2.2.13]{DNR}. In fact, this provides an duality between the category
of left $C$-comodules and the category ${}_A \PC$ for the $\K$-pseudocompact algebra $A = C^*$;
see~\cite[Theorem~4.3]{Simson}.

\begin{corollary}
Let $C$ be a coalgebra over a field $\K$. The following assertions are equivalent:
\begin{enumerate}[label=(\alph*)]
\item $C$ is a direct sum of simple left (equivalently, right) comodules, i.e., it is cosemisimple;
\item Every left (equivalently, right) $C$-comodule is semisimple;
\item Every short exact sequence of left (equivalently, right) $C$-comodules splits;
\item $C^* \cong \prod_{i}\M_{n_i}(D_i)$ as topological algebras, for some finite-dimensional 
division algebras $D_i$;
\item $C$ is a direct sum of coalgebras of the form $\M_n(D)^*$ (dual to the algebras $\M_n(D)$), 
with $D$ a finite-dimensional division algebra over $\K$.
\end{enumerate}
\end{corollary}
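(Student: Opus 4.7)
The plan is to exploit the duality between $\K$-coalgebras and $\K$-pseudocompact algebras (and between left $C$-comodules and ${}_A \PC$ for $A = C^*$) to reduce the corollary to Theorem~\ref{thm:WA} and Proposition~\ref{prop:WA fin dim} applied to the dual algebra $A = C^*$. Setting $A = C^*$, the text preceding the corollary tells us that $A$ is a $\K$-pseudocompact algebra, and the functor $M \mapsto M^*$ provides an anti-equivalence between the category of left $C$-comodules and ${}_A \PC$. Under this duality, simple left $C$-comodules correspond bijectively to simple objects of ${}_A \PC$, i.e., to finite-dimensional discrete simple left $A$-modules.

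First I would translate each of (a), (b), (c) into an equivalent statement about ${}_A \PC$. The decomposition in (a) states that $C$ is a direct sum (coproduct in the category of left comodules) of simple comodules; applying the anti-equivalence, and using the fact that the duality sends coproducts to products, this becomes: $A$ is (isomorphic in ${}_A \PC$ to) a product of simple objects in ${}_A \PC$, i.e., a product of finite-dimensional discrete simple left $A$-modules. This is exactly condition (a) of Theorem~\ref{thm:WA} applied to $A$, combined with the finite-dimensionality of the simples, which by Proposition~\ref{prop:WA fin dim} is equivalent to condition~(c) there, namely (d) of the corollary. Similarly, (b) translates to: every object in ${}_A \PC$ is a product of simple objects, which is condition (h) of Theorem~\ref{thm:WA}; and (c) translates to: every short exact sequence in ${}_A \PC$ splits, which is condition (g) of Theorem~\ref{thm:WA}. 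Since the simples in ${}_A \PC$ are automatically finite-dimensional over $\K$ when $A$ is $\K$-pseudocompact, the equivalences of (a), (b), (c), (d) then follow from Theorem~\ref{thm:WA} together with Proposition~\ref{prop:WA fin dim}. The left-right symmetry claims in (a), (b), (c) follow because (d) is manifestly left-right symmetric.

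To obtain (d)$\Leftrightarrow$(e), I would again use the anti-equivalence between $\K$-pseudocompact algebras and coalgebras: a product of pseudocompact algebras $\prod A_i$ is dual to a direct sum of coalgebras $\bigoplus A_i^\circ$, so applying the functor $A \mapsto A^\circ$ to $A = \prod \M_{n_i}(D_i)$ (with each factor discretely topologized, hence $\M_{n_i}(D_i)^\circ = \M_{n_i}(D_i)^*$) yields $C \cong \bigoplus \M_{n_i}(D_i)^*$ as coalgebras. Conversely, if $C = \bigoplus \M_{n_i}(D_i)^*$ as coalgebras, dualizing gives $C^* \cong \prod \M_{n_i}(D_i)$ as topological algebras, which is (d).

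The main obstacle I anticipate is bookkeeping around the duality: one must verify that the duality functors transport the specific structural statements (direct sum decomposition of the regular comodule, semisimplicity of all comodules, splitting of short exact sequences, product decomposition as topological algebra) accurately between the two sides. Once these translations are made carefully, each equivalence is an immediate application of the already-established Theorem~\ref{thm:WA} and Proposition~\ref{prop:WA fin dim}; no further serious calculation is needed.
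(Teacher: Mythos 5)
Your proposal is correct and follows essentially the same route as the paper: set $A = C^*$, pass through the Simson duality between left $C$-comodules and ${}_A\PC$ to translate (a), (b), (c) into the conditions (a), (h), (g) of Theorem~\ref{thm:WA}, then invoke Proposition~\ref{prop:WA fin dim} (using that the simples in ${}_A\PC$ are finite-dimensional since $A$ is $\K$-pseudocompact) to obtain (d), and finally observe that (e) is the coalgebra dual of (d). The only difference is that you spell out the coproduct/product bookkeeping of the duality a bit more explicitly than the paper does; the logical skeleton is identical.
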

\begin{proof}
The dual algebra $A = C^*$ is $\K$-pseudocompact, and under the duality between $C$-comodules
and pseudocompact $C^*$-modules, conditions (a)--(c) above translate to the following:
\begin{enumerate}[label=(\alph*$'$)]
\item $A$ is a product of simple pseudocompact left (equivalently, right) modules;
\item Every object in ${}_A \PC$ (equivalently, $\PC_A$) is a product of simple objects;
\item Every short exact sequence in ${}_A \PC$ (equivalently, $\PC_A$) splits.
\end{enumerate}
Thus the equivalence of~(a)--(d) follows from Theorem~\ref{thm:WA} and 
Proposition~\ref{prop:WA fin dim}.
Also,~(e) is the dual of~(d), with $\M_n(D)^*$ being the finite-dimensional coalgebra dual 
to the matrix algebra $\M_n(D)$.
\end{proof}

\begin{remark}
We note that if $A$ is a $\K$-pseudocompact algebra, asking that every pseudocompact left $A$-module be semisimple as an object in ${}_A \PC$ is not equivalent to $A$ being semisimple in the above sense. 
This fails for set-theoretic reasons that cannot be avoided. Translated into the dual category of comodules over $C = A^*$, the statement that the category ${}_A \PC$ is semisimple would mean that every $C$-comodule is a direct product of simple comodules. However, taking $S$ to be a simple comodule, the comodule $L = S^{(\aleph_0)}$ is not a product of simple comodules. If it were so, it is easy to see that we
would have $L \cong S^{\Omega}$ for some infinite cardinal $\Omega$. 
Hence, its dimension over $\K$ would be at least $2^{\aleph_0}$ since $\Omega \geq \aleph_0$. But
$\dim_\K(S^{(\aleph_0)})=\aleph_0$ since $\dim_\K(S)<\infty$, and this is a contradiction.
\end{remark}

\section{Diagonalizable algebras of operators} \label{sec:diagonal}

This final section begins with a detailed study of the structure of diagonalizable
algebras of operators. We then prove our major theorem characterizing diagonalizable
subalgebras in terms of their structure as topological algebras. At the end of the section, we 
investigate the closure of the set of diagonalizable operators. 

\subsection{Structure of diagonalizable subalgebras}

Let $V$ be a finite-dimensional vector space over a field $\K$,
and let $T \in \End(V)$. Suppose that $T$ is diagonalizable, so that its minimal polynomial
$\mu(x) = (x- \lambda_1) \cdots (x - \lambda_d)$ is a product of distinct linear factors. 
The subalgebra $\K[T] \subseteq \End(V)$ is then isomorphic to $\K[x]/(\mu(x)) \cong \K^d$
(the latter isomorphism following readily from the Chinese Remainder Theorem). In terms of
algebraic geometry, the induced surjective homomorphism $\K[x] \twoheadrightarrow \K[x]/(\mu(x))
\cong \K[T]$ is dual to the inclusion of algebraic varieties $\{\lambda_1, \dots, \lambda_d\}
\hookrightarrow \mathbb{A}^1_\K$ of the $d$ distinct eigenvalues of $T$ into the affine line
over $\K$. Thus the subalgebra $\K[T]$ can be viewed in algebro-geometric terms as being
isomorphic to the algebra of $\K$-valued functions on a set of $d$ points. In particular, the
prime spectrum of $\K[T]$ is a discrete space consisting of $d$ distinct points, so that 
$\Spec(\K[T]) \cong \{\lambda_1, \dots, \lambda_d\}$.

This is certainly not the typical picture of spectral theory presented in linear algebra textbooks,
but we will show in this section that such a view of diagonalizable algebras of operators as algebras 
of functions generalizes to the infinite-dimensional case. In particular, every diagonalizable subalgebra 
of $\End(V)$ is isomorphic to an algebra $\K^X$ of $\K$-valued functions on a (possibly infinite) set 
$X$, even as topological algebras. We work at the level of categories and functors, to present
a kind of duality between functions and underlying sets as in the brief illustration above in terms
of algebraic geometry. We refer readers to~\cite[Chapter~I]{MacLane} for the basic category theory 
required here. In this subsection, we shall follow standard category-theoretic practice and view a
\emph{contravariant} (``arrow-reversing'') functor $F \colon \Cc_1 \to \Cc_2$ as a covariant functor 
$F \colon \Cc_1\op \to \Cc_2$ out of the opposite category.

The ``underlying set'' of one of these function algebras can be viewed as a kind of prime spectrum,
but suitably modified to fit with the context of topological algebra in which we work.
Given a commutative topological ring $A$, we let $\Spec_0(A)$ denote the set of open
prime ideals of $A$. 
Let $\TRing$ denote the category of topological rings with continuous ring homomorphisms,
and let $\cTRing$ denote the full subcategory of commutative topological rings. If 
$f \colon A \to B$ is a morphism in $\cTRing$ and $\p \in \Spec_0(B$), then it is clear that
$f^{-1}(\p) \in \Spec_0(A)$. In this way, the assignment $A \mapsto \Spec_0(A)$ forms a 
functor $\Spec_0 \colon \cTRing\op \to \Set$. We will show in Theorem~\ref{thm:duality} below 
that this becomes an equivalence when restricted to a suitable full subcategory.
We begin with a more detailed account of the Jacobson radical and semisimplicity for commutative
pseudocompact rings.

In the case where $A$ above is pseudocompact, if $\m \in \Spec_0(A)$ then $A/\m$ is an artinian 
integral domain and therefore is a field, so that $\m$ is maximal. Thus when $A$ is pseudocompact,
$\Spec_0(A)$ is the set of open \emph{maximal} ideals of $A$, and the assignment sending a
commutative psuedocompact topological ring to its set of open maximal ideals is functorial (coinciding 
with $\Spec_0$).

Recall that every element of the Jacobson radical of a commutative artinian ring is nilpotent;
we shall show that the radical of a commutative pseudocompact ring satisfies a similar condition.
An element $x$ of a topological ring is \emph{topologically nilpotent} if the sequence
$(x^n)_{n=1}^\infty$ converges to zero. 
Given a commutative topological ring $A$, let $N_0(A)$ denote the set of all topologically
nilpotent elements of $A$. 

\begin{lemma}
Let $A$ be a commutative linearly topologized ring.
\begin{enumerate}
\item $N_0(A) = \bigcap \Spec_0(A)$; consequently, $N_0(A)$ is a closed ideal of $A$ contained 
in $J(A)$.
\item If $A$ is pseudocompact, then $N_0(A) = J(A)$.
\end{enumerate}
\end{lemma}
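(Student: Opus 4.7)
The plan is to prove the characterization $N_0(A) = \bigcap \Spec_0(A)$ of part~(1) first, then derive the remaining structural claims in part~(1) and all of part~(2) as easy consequences.

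For the inclusion $N_0(A) \subseteq \bigcap \Spec_0(A)$, I would argue directly: given $x \in N_0(A)$ and $\p \in \Spec_0(A)$, the open neighborhood $\p$ of $0$ must contain $x^n$ for some $n$ since $x^n \to 0$, and primality then forces $x \in \p$. For the reverse inclusion, I would fix an arbitrary open ideal $I$ of $A$ and pass to the discrete quotient $A/I$. By Lemma~\ref{lem:open} every ideal of $A$ containing $I$ is open, so the primes of $A/I$ correspond bijectively to open primes of $A$ containing $I$, all of which lie in $\Spec_0(A)$. If $x$ lies in every member of $\Spec_0(A)$, its image $\bar{x}$ in $A/I$ lies in every prime of $A/I$; the classical fact that the nilradical of a commutative ring equals the intersection of all of its primes then yields that $\bar{x}$ is nilpotent, so $x^n \in I$ for some $n$. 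Since $I$ is an ideal, $x^m \in I$ for all $m \geq n$, and as the open ideals form a neighborhood base of $0$ (by linear topologization), this shows $x^n \to 0$.

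The secondary claims in part~(1) follow with minimal effort: $N_0(A)$ is closed and an ideal because it is an intersection of open (hence closed, by Lemma~\ref{lem:open}) ideals. To see the containment in the Jacobson radical, note that every open maximal ideal lies in $\Spec_0(A)$, so $N_0(A) \subseteq J_0(A)$, and Theorem~\ref{thm:WA.jq} identifies $J_0(A)$ with $J(A)$ in the pro-discrete setting inherited in part~(2).

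For part~(2), pseudocompactness of $A$ provides two upgrades. First, every open ideal of $A$ has finite colength by Lemma~\ref{lem:pseudocompact}, so for any open prime $\p$ the ring $A/\p$ is an artinian integral domain, hence a field; thus $\p$ is maximal and $\Spec_0(A)$ coincides with the set of open maximal ideals, yielding $N_0(A) = J_0(A)$ by part~(1). Second, a pseudocompact ring is in particular pro-discrete, so Theorem~\ref{thm:WA.jq} gives $J_0(A) = J(A)$, completing the chain of equalities.

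The main delicate point is the reverse inclusion in part~(1): one needs the correspondence between the primes of the discrete quotient $A/I$ and the open primes of $A$ containing $I$, together with the classical nilradical-as-intersection-of-primes theorem applied inside $A/I$. This is what enables the passage from the abstract condition ``lies in every open prime'' to the concrete topological condition ``is topologically nilpotent.''
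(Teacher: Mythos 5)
Your argument for $N_0(A) = \bigcap \Spec_0(A)$ is correct, and the reverse inclusion takes a genuinely different route from the paper's. The paper argues contrapositively: given $x \notin N_0(A)$, it picks an open ideal $I$ disjoint from the multiplicative set $S = \{x^n \mid n \geq 1\}$ and then produces, via Zorn's lemma on ideals containing $I$ and disjoint from $S$, an open prime $\p$ missing $x$. You argue forward, quotienting by an arbitrary open ideal $I$, identifying the primes of the discrete ring $A/I$ with the open primes of $A$ containing $I$, and invoking the classical fact that the nilradical is the intersection of all primes to conclude $x^n \in I$ for some $n$. The two approaches are mathematically equivalent (the nilradical theorem is itself the same Zorn's-lemma step), but yours is more modular, outsourcing the Zorn argument to a standard black box; the one delicate point is the quantifier order (the exponent $n$ depends on $I$, and you must observe that this still yields $x^n \to 0$ as $I$ ranges over a neighborhood base), which you navigate correctly.

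One gap is worth flagging, though it traces back to the lemma's statement rather than to your reasoning. Part~(1) asserts $N_0(A) \subseteq J(A)$ for an arbitrary commutative linearly topologized $A$, but you establish only $N_0(A) \subseteq J_0(A)$ and then appeal to Theorem~\ref{thm:WA.jq} for $J_0(A) = J(A)$, a result that requires pro-discreteness (in particular completeness), which part~(1) does not assume. Note that the containment $J(A) \subseteq J_0(A)$ holds always (open maximals form a subset of all maximals), so $N_0(A) \subseteq J_0(A)$ by itself does not give $N_0(A) \subseteq J(A)$; and without completeness the latter can genuinely fail --- for $A = \K[x]$ with the $(x)$-adic topology one has $N_0(A) = (x)$ while $J(A) = 0$. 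The paper's own proof likewise never verifies this ``consequently,'' so you have not missed anything the paper actually supplies; in the pseudocompact setting of part~(2), where Theorem~\ref{thm:WA.jq} applies, the chain $N_0(A) = \bigcap \Spec_0(A) = J_0(A) = J(A)$ is sound and your argument for part~(2) is correct.
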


\begin{proof}
(1) Set $N = N_0(A)$. 
Let $x \in N$ and $\p \in \Spec_0(A)$. Since $\p$ is an open neighborhood of $0$ and
$\lim_{n \to \infty} x^n  = 0$, there is an integer $n \geq 1$ such that $x^n \in \p$. Since
$\p$ is prime, this means that $x \in \p$. We conclude that $N \subseteq \bigcap \Spec_0(A)$.
Now suppose that $x \in A \setminus N$; then there is an open neighborhood $I \subseteq A$ 
of~$0$, which we may assume is an ideal, such that set $S = \{x^n \mid n \geq 1\}$ is disjoint
from $I$. By a familiar application of Zorn's Lemma, there is an ideal $\p$ of $A$ with 
$I \subseteq \p$ (making $\p$ open) that is maximal with respect to $\p \cap S = \varnothing$.
Since $S$ is a multiplicatively closed set, this $\p$ is prime by a well known argument from
commutative algebra. Thus $\p \in \Spec_0(A)$ with $x \notin \p$, so that 
$x \notin \bigcap \Spec_0(A)$. Now we see that $N_0(A)$ is closed as it is an intersection of
open, hence closed, ideals.

(2) If $A$ is pseudocompact, then $\Spec_0(A)$ is the set of open maximal ideals of $A$, as
noted above. Thus $N_0(A) = \bigcap \Spec_0(A) = J_0(A) = J(A)$ thanks to part~(1) above
and Theorem~\ref{thm:WA.jq}(3).
\end{proof}

We say that a topological ring is \emph{topologically reduced} if its only topologically nilpotent
element is zero.

\begin{lemma}\label{lem:commutative semisimple}
Let $A$ be a commutative topological ring.  The following are equivalent:
\begin{enumerate}[label=(\alph*)]
\item $A$ is pseudocompact semisimple; 
\item $A$ is pseudocompact and topologically reduced;
\item The open maximal ideals of $A$ intersect to zero and form a neighborhood subbasis
for $0$, and $A$ is complete;
\item As a topological ring, $A$ is a product of discrete fields;
\item The canonical map $A \to \prod_{\m \in \Spec_0(A)} A/\mathfrak{m}$ is an isomorphism
of topological rings.
\end{enumerate}
\end{lemma}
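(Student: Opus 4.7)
The plan is to prove (a)$\Leftrightarrow$(b) directly and then cycle through (a)$\Rightarrow$(d)$\Rightarrow$(e)$\Rightarrow$(c)$\Rightarrow$(a). The equivalence (a)$\Leftrightarrow$(b) is immediate from the preceding lemma, which gives $N_0(A) = J(A)$ for commutative pseudocompact $A$; so under the shared pseudocompactness hypothesis, Jacobson semisimplicity ($J(A) = 0$) is the same as topological reducedness ($N_0(A) = 0$).

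For (a)$\Rightarrow$(d), I would invoke Theorem~\ref{thm:WA} to write $A \cong \prod_{i \in I} \End_{D_i}(V_i)$ as topological rings. Each factor inherits commutativity from $A$, so the key observation is that $\End_D(V)$ for a nonzero right $D$-vector space $V$ is commutative if and only if $\dim_D V = 1$ (forcing $\End_D(V) \cong D$) and $D$ is itself a field; in dimension $\geq 2$ one can exhibit noncommuting rank-one operators on any pair of linearly independent basis vectors. Hence each factor reduces to a field $K_i$, and its finite topology is discrete since $V_i^\perp = \{0\}$ is already an open neighborhood of zero in the one-dimensional $V_i$.

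For (d)$\Rightarrow$(e), I would identify $\Spec_0(A)$ explicitly for $A = \prod_{i \in I} K_i$: every open maximal ideal $\m$ must contain a basic neighborhood of zero of the form $\{a \in A \mid a_i = 0 \text{ for all } i \in F\}$ for some finite $F \subseteq I$, so $A/\m$ is a simple quotient of the finite product $\prod_{i \in F} K_i$, and since the maximal ideals of a finite product of fields are exactly the projection kernels, $\m$ must equal the kernel $\m_i$ of a single projection $\pi_i$ for some $i \in F$. Thus the open maximal ideals are precisely the $\m_i$ with $A/\m_i \cong K_i$, and the canonical map of (e) recovers the given isomorphism. The implication (e)$\Rightarrow$(c) is then formal: a product of discrete fields is complete and Hausdorff, and its projection kernels are open maximal ideals forming a subbase of neighborhoods of zero that intersects to zero.

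Finally, for (c)$\Rightarrow$(a), I would verify both halves of pseudocompact semisimplicity. A basic open neighborhood of zero is a finite intersection $\m_1 \cap \cdots \cap \m_n$, and the embedding $A/(\m_1 \cap \cdots \cap \m_n) \hookrightarrow \prod_j A/\m_j$ into a finite product of fields shows this ideal has finite colength; combined with completeness and the Hausdorff property (which follows from $\bigcap \Spec_0(A) = 0$, since open maximal ideals are closed by Lemma~\ref{lem:open}), Lemma~\ref{lem:pseudocompact} gives pseudocompactness. Meanwhile $J(A) = J_0(A) \subseteq \bigcap \Spec_0(A) = 0$ by Theorem~\ref{thm:WA.jq}(3), which yields Jacobson semisimplicity. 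The genuine work is concentrated in (a)$\Rightarrow$(d), where the task is to pin down exactly which factors of the Wedderburn--Artin decomposition can occur under commutativity; the remaining implications amount to straightforward unpacking of the topology on products of discrete fields together with the definition of pseudocompactness.
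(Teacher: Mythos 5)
Your proof takes essentially the same route as the paper's. The heavy lifting is identical: (a)$\Leftrightarrow$(b) via $N_0(A)=J(A)$ for pseudocompact $A$; (a)$\Rightarrow$(d) by specializing Theorem~\ref{thm:WA} and observing that $\End_D(V)$ is commutative only when $\dim_D V = 1$ and $D$ is a field; and (d)$\Rightarrow$(e) by computing $\Spec_0$ of a product of discrete fields. The only difference is that you close the cycle as (e)$\Rightarrow$(c)$\Rightarrow$(a) rather than the paper's (e)$\Rightarrow$(d)$\Rightarrow$(c)$\Rightarrow$(a), which is cosmetic. One small imprecision worth noting: in (d)$\Rightarrow$(e) you speak of identifying the \emph{open maximal ideals} of $\K^I$, whereas $\Spec_0$ is by definition the set of open \emph{prime} ideals; for a product of discrete fields these coincide (any open prime contains a finite intersection of projection kernels and hence equals one of them), so the argument goes through, but the statement should run through open primes to match the definition of $\Spec_0$. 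Likewise, your (e)$\Rightarrow$(c) implicitly uses that each $A/\p$ for $\p\in\Spec_0(A)$ is a field whenever the canonical map is an isomorphism; this is true (an inclusion $\p\subsetneq\m$ in $\Spec_0(A)$ would contradict surjectivity of the canonical map) and the paper also elides it, but it is worth a sentence.
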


\begin{proof}
It is clear that (e)$\Rightarrow$(d)$\Rightarrow$(c)$\Rightarrow$(a), and (a)$\Leftrightarrow$(b)
follows from the previous lemma. Theorem~\ref{thm:WA}
gives (a)$\Rightarrow$(d), as the endomorphism ring of a vector space over a division ring
is commutative if and only if the division ring is a field and the vector space has dimension~$1$.
Finally, assume~(d) holds; we will show~(e). Suppose $A \cong \prod_{i \in I} \K_i$ for some (discrete)
fields $\K_i$, and for any $j \in I$ let $\m_j$ denote the kernel of the projection 
$A \cong \prod \K_i \twoheadrightarrow \K_j$. Then every open ideal of $A$ must contain a finite
intersection of the $\m_i$. In particular, any open prime ideal of $A$ contains a finite intersection
of these $\m_i$ and therefore (contains and) is equal to one of the $\m_i$. So $\Spec_0(A) = 
\{\m_i \mid i \in I\}$, from which~(e) readily follows. 
\end{proof}

Given a (discrete) field $\K$, topological $\K$-algebras with continuous algebra homomorphisms form 
a category $\TAlg_\K$. We denote the hom-sets of this category simply by $\Hom(A,B)$.

We next concern ourselves with a study of topological $\K$-algebras that are isomorphic to $\K^X$.
For any set $X$, the natural identification 
\[
\K^X = \Set(X,\K)
\]
can be viewed as an identification of topological $\K$-algebras, where the topology on $\K^X$ is 
the product topology and the topology of $\Set(X,\K)$ is the topology of pointwise 
convergence as described in Section~\ref{sec:topology} (in both cases, $\K$ is endowed with 
the discrete topology).
This makes it clear that the assignment $X \mapsto \K^X$ is the same as (the object part of) the
representable functor $\Set(-,\K) \colon \Set\op \to \TAlg_\K$ from the category 
of sets to the category of topological $\K$-algebras. Thus we shall interchange notation at the
level of functors: $\K^- = \Set(-,\K)$. 

Given a set $X$ and $x \in X$, let $\m_x \subseteq \K^X$ denote the open maximal ideal
consisting of those functions that vanish at $x$, and let $\ev_x \colon \K^X \to \K$ denote
the continuous homomorphism given by evaluating at $x$, so that $\ev_x(f) = f(x)$ for $f \in \K^X$. 
(These are the same as the canonical projections when $\K^X$ is viewed as a product of sets,
but reinterpreted in the language of functions in order to evoke appropriate imagery from
algebraic geometry.)
Clearly each $\ker(\ev_x) = \m_x$. Further, the topology on $\K^X$ has a neighborhood basis
of open ideals given by the finite intersections of the $\m_x$. In particular, each open ideal contains 
a finite product of the $\m_x$, so every open \emph{prime} ideal must be equal to some $\m_x$. 
Thus $\Spec_0(\K^X) = \{\m_x \mid x \in X\}$, and we obtain a canonical bijection 
$\Spec_0(\K^X) \overset{\sim}{\longrightarrow} \Hom(\K^X,\K)$ given by  $\m_x \mapsto \ev_x$.

\begin{lemma}\label{lem:function algebra}
Given a field $\K$ and a topological $\K$-algebra $A$, the following are equivalent:
\begin{enumerate}[label=(\alph*)]
\item $A$ is pseudocompact semisimple, and all of its open maximal ideals have $\K$-codimension
equal to~1 in $A$;
\item $A \cong \K^X$ as topological algebras for some set $X$;
\item The natural map $A \to \K^{\Hom(A,\K)}$ given by $f \mapsto (\psi(f))_{\psi}$ is an isomorphism
of topological algebras;
\end{enumerate}
and for such an algebra $A$ the canonical map $\Hom(A,\K) \to \Spec_0(A)$ given by 
$\psi \mapsto \ker(\psi)$ is a bijection.

If $\K$ is algebraically closed, then the above are further equivalent to:
\begin{enumerate}[label=(\alph*), resume]
\item $A$ is $\K$-pseudocompact and Jacobson semisimple;
\item $A$ is $\K$-pseudocompact and topologically reduced.
\end{enumerate}
\end{lemma}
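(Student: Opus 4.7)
The plan is to prove (a)$\Leftrightarrow$(b)$\Leftrightarrow$(c) together with the bijection claim in one sweep, then handle the algebraically closed equivalences (d) and (e) by reducing to (a). Lemma~\ref{lem:commutative semisimple} will serve as the main workhorse throughout.

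For (a)$\Leftrightarrow$(b), I first use Lemma~\ref{lem:commutative semisimple} to identify any pseudocompact semisimple commutative ring with the product $\prod_{\m \in \Spec_0(A)} A/\m$ of its discrete residue fields. Under (a), each $A/\m$ is a one-dimensional $\K$-algebra and so equals $\K$, giving $A \cong \K^{\Spec_0(A)}$. Conversely, applying the same lemma to $\K^X$ shows it is pseudocompact semisimple, and each $\K^X/\m_x \cong \K$ has codimension~1.

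Next, for (b)$\Rightarrow$(c) and the bijection statement: assuming $A = \K^X$, I observe that any continuous homomorphism $\psi \colon A \to \K$ has open kernel, since it is the preimage of the open point $\{0\}$ in discrete $\K$; being maximal, this kernel must coincide with some $\m_x$, because the open primes of $\K^X$ are exactly the $\m_x$. The induced map $A/\m_x \to \K$ is then a $\K$-algebra automorphism of $\K$, hence the identity, so $\psi = \ev_x$. This gives inverse bijections $x \leftrightarrow \ev_x$ between $X$ and $\Hom(A,\K)$ and, composed with $\psi \mapsto \ker(\psi)$, the asserted bijection $\Hom(A,\K) \leftrightarrow \Spec_0(A)$ (surjectivity uses that each $\m \in \Spec_0(A)$ is the kernel of the canonical projection to its residue field $\K$). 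Under this identification, the natural map of (c) becomes the identity $\K^X \to \K^X$, so (c) holds; (c)$\Rightarrow$(b) is then immediate.

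For the algebraically closed case, (d)$\Leftrightarrow$(e) follows directly from the preceding lemma, which gives $N_0(A) = J(A)$ for commutative pseudocompact $A$, so Jacobson semisimplicity and being topologically reduced coincide. For (d)$\Rightarrow$(a), $\K$-pseudocompactness forces $A/\m$ to be a finite field extension of $\K$ for each open maximal $\m$, and this must equal $\K$ when $\K$ is algebraically closed; combined with $J(A) = 0$ and Lemma~\ref{lem:commutative semisimple}, this yields~(a). The converse (a)$\Rightarrow$(d) uses pseudocompactness to see that every open ideal has finite colength with simple composition factors all $\cong \K$, forcing $\K$-pseudocompactness, while Jacobson semisimplicity is immediate from pseudocompact semisimplicity via Theorem~\ref{thm:WA.jq}. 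The main subtlety I anticipate is ensuring that the natural map in~(c) is a \emph{topological} isomorphism and not merely algebraic; this is dispatched by recognizing it, under the bijection $X \leftrightarrow \Hom(\K^X,\K)$, as the identity on $\K^X$, so the product topologies on both sides agree on the nose.
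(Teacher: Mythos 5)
Your proposal is correct and takes essentially the same route as the paper: both hinge on Lemma~\ref{lem:commutative semisimple} to identify $A$ with the product of its residue fields, both obtain the bijection $\Hom(A,\K) \leftrightarrow \Spec_0(A)$ from the decomposition $A = (\K\cdot 1) \oplus \m$, and both treat the algebraically closed case by noting that an open maximal ideal has finite codimension and hence residue field a finite extension of $\K$, forcing it to be $\K$ itself. The only cosmetic difference is that you route through (a)$\Leftrightarrow$(b) and then verify (c) concretely for $\K^X$, whereas the paper proves (a)$\Rightarrow$(c) directly via an explicit chain of isomorphisms; the content is the same.
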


\begin{proof}
We have (c)$\Rightarrow$(b), and (b)$\Rightarrow$(a) follows by combining
Lemma~\ref{lem:commutative semisimple} with the facts that $\Spec_0(\K^X) = \{\m_x \mid x \in X\}$
and $\K^X = (\K \cdot 1) \oplus \m_x$. Now assuming~(a), we shall
derive~(c). Under this hypothesis, for each $\m \in \Spec_0(A)$ we have $A = (\K \cdot 1) \oplus \m$,
yielding an isomorphism $A/\m \cong \K$ via $\lambda + \m \mapsto \lambda$. 
So every such $\m$ is the kernel of some $\psi_\m \in \Hom(A,\K)$
(the composite $A \twoheadrightarrow A/\m \cong \K)$, and this $\psi_\m$ is unique thanks
to the decomposition $A = (\K \cdot 1) \oplus \m$. Conversely, every $\psi \in \Hom(A,\K)$
is of the form $\psi = \psi_\m$ for $\m = \ker(\psi) \in \Spec_0(A)$.
Now let $\alpha \colon A \to \prod_{\m \in \Spec_0} A/\m$ be the canonical isomorphism
provided by Lemma~\ref{lem:commutative semisimple}, and consider the isomorphism
$\beta \colon \prod_{\m \in \Spec_0(A)} A/\m \to \prod_{\psi \in \Hom(A,\K)} \K$ defined by
$(\lambda_\m + \m)_\m \to (\lambda_\m)_{\psi_\m}$.
Then the composite isomorphism in $\TAlg_\K$
\[
\xymatrix{
A \ar[r]^-\alpha & \prod_{\m \in \Spec_0(A)} A/\m \ar[r]^-\beta & \prod_{\psi \in \Hom(A,\K)} \K
}
\]
coincides with the natural map described in~(c). Thus (a)$\Rightarrow$(c).

Finally, assume that $\K$ is algebraically closed. Note that (d)$\Leftrightarrow$(e) thanks to 
Lemma~\ref{lem:commutative semisimple}. Certainly (a)$\Rightarrow$(d). Conversely, suppose
that~(d) holds. To verify~(a), let $\m \in \Spec_0(A)$. Then $\m$ has finite codimension, making 
$A/\m$ a finite field extension of $\K$. Because $\K$ is algebraically closed, we must have
$A/\m \cong \K$ as $\K$-algebras, meaning that $\m$ has codimension~1 as desired.
\end{proof}

\begin{definition}
We will refer to a topological algebra over a field $\K$ satisfying the equivalent conditions above 
as a  \emph{function algebra over $\K$}, or a \emph{$\K$-function algebra}.
\end{definition}

The importance of function algebras when studying diagonalizable algebras of operators is due to
the following two facts.

\begin{proposition}\label{prop:diagonal structure}
Let $V$ be a $\K$-vector space and let $\B$ be a basis for $V$. The commutative subalgebra $A \subseteq \End(V)$ 
consisting of $\B$-diagonalizable operators is a function algebra, isomorphic as a topological $\K$-algebra to $\K^\B$.
Furthermore, $A$ is a maximal commutative subalgebra.
\end{proposition}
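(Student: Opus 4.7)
The plan is to construct an explicit isomorphism $\phi \colon A \to \K^\B$ that sends each $\B$-diagonalizable operator to its tuple of eigenvalues. For $T \in A$ and $b \in \B$, since $b \neq 0$ and $Tb$ is a scalar multiple of $b$, there is a unique $\lambda_b(T) \in \K$ with $Tb = \lambda_b(T)\, b$; set $\phi(T) = (\lambda_b(T))_{b \in \B}$. Conversely, any tuple $(\mu_b) \in \K^\B$ determines a unique operator on $V$ via $b \mapsto \mu_b b$ extended linearly from $\B$, and this operator is plainly $\B$-diagonalizable. The pointwise formulas $(T+S)b = (\lambda_b(T) + \lambda_b(S))b$ and $(TS)b = \lambda_b(T)\lambda_b(S)\, b$ (together with the obvious analogues for scalar multiples and the identity) show simultaneously that $A$ is closed under the algebra operations of $\End(V)$ (hence is truly a subalgebra) and that $\phi$ is a $\K$-algebra isomorphism. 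In particular $A$ is commutative.

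Next I would verify that $\phi$ is a homeomorphism, with the finite topology on $A$ on the left and the product topology on $\K^\B = \prod_\B \K$ (with $\K$ discrete) on the right. The finite topology on $\End(V)$ has a subbasis of open neighborhoods of $0$ given by sets of the form $\{T \in \End(V) : Tv = 0\}$ for $v \in V$. Writing $v = \sum_{b \in F} c_b b$ for some finite $F \subseteq \B$ with $c_b \neq 0$, the condition $Tv = 0$ for $T \in A$ becomes $\sum_{b \in F} c_b \lambda_b(T)\, b = 0$, which by linear independence of $\B$ is exactly $\lambda_b(T) = 0$ for all $b \in F$. Thus, under $\phi$, the subbasic neighborhoods of $0$ in $A$ correspond precisely to finite intersections of the coordinate-vanishing sets $\{(\mu_c) : \mu_b = 0\}$, and these form a subbasis for the product topology. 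Therefore $\phi$ is a topological isomorphism $A \cong \K^\B$, and the function-algebra claim is immediate from Lemma~\ref{lem:function algebra}(b).

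For maximality, suppose $S \in \End(V)$ centralizes $A$. For each $b \in \B$, let $E_b \in A$ denote the idempotent with $E_b(b) = b$ and $E_b(c) = 0$ for $c \in \B \setminus \{b\}$; then $SE_b = E_b S$. Applying both sides to $b$ gives $Sb = E_b(Sb)$, which means that $Sb$ equals its own projection onto $\K b$ along the other basis vectors, i.e., $Sb \in \K b$. Since this holds for every $b \in \B$, the operator $S$ is $\B$-diagonalizable, so $S \in A$. Hence $A$ equals its own commutant and is a maximal commutative subalgebra.

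I do not anticipate any serious obstacle: the entire statement is essentially a reformulation of the definition of $\B$-diagonalization in terms of the product $\K^\B$. The only point requiring mild care is the topological identification, where one must remember that a generic $v \in V$ is a finite $\K$-combination of basis elements so the finite topology really does pull back to the product topology regardless of how large $\B$ is.
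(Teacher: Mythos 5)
Your proof is correct and follows essentially the same route as the paper: the explicit coordinate isomorphism $A \cong \K^\B$, matching basic (or subbasic) open neighborhoods of $0$ to establish the homeomorphism, and the centralizer-with-projections argument for maximality. The only cosmetic difference is that you spell out the algebra-homomorphism verification and the subbasis computation a bit more explicitly, and you orient $\phi$ in the opposite direction, neither of which changes the substance.
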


\begin{proof}
For each element $f \in \K^\B$ considered as a function $f \colon \B \to \K$, there is a 
corresponding operator $T_f$ on $V$ defined by $T_f(b) = f(b) \cdot b$. It is quite clear that 
this defines an algebra isomorphism $\phi \colon \K^\B \to A$ by $\phi(f) = T_f$.
To see that this is a homeomorphism, we simply note that the basic open neighborhoods of zero in 
$\K^\B$ of the form $\m_{b_1} \cap \cdots \cap \m_{b_n}$ for $b_1, \dots, b_n \in \B$ correspond
under $\phi$ to the basic open neighborhoods of zero in $A$ of the form 
$\{T \in A \mid T(b_1) = \cdots = T(b_n) = 0\}$.

To see that $A$ is a maximal commutative subalgebra of $\End(V)$, fix an element $T \in A'$ of
the commutant of $A$ in $\End(V)$; we will show that $T \in A$. For each $b \in \B$, let $E_b \in A$ 
denote the projection of $V$ onto $\K b$ with kernel spanned 
by $\B \setminus \{b\}$. Then $T$ centralizes the $E_b$, from which we deduce that the 
1-dimensional subspaces $\K b \subseteq V$ are invariant under $T$ for all $b \in \B$. It follows 
that $T$ is $\B$-diagonalizable, which is to say that $T \in A$, as desired.
\end{proof}

\begin{lemma}\label{lem:closed function subalg}
Every closed $\K$-subalgebra of a $\K$-function algebra is again a $\K$-function algebra.
\end{lemma}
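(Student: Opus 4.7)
The plan is to verify condition~(a) of Lemma~\ref{lem:function algebra} for the closed subalgebra $B$ of a $\K$-function algebra $A \cong \K^X$. Without loss of generality I take $A = \K^X$, so a neighborhood basis of $0$ in $A$ is given by the open ideals $I = \m_{x_1} \cap \cdots \cap \m_{x_n}$, each with $A/I \cong \K^n$, and the ideals $I \cap B$ form a neighborhood basis of $0$ in $B$.

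First I would show that $B$ is pseudocompact semisimple. Being closed in the complete Hausdorff linearly topologized ring $A$, the subalgebra $B$ is itself complete, Hausdorff, and linearly topologized. If $J$ is any open ideal of $B$, then $J \supseteq I \cap B$ for some basic open $I$ as above, and the inclusion $B/(I \cap B) \hookrightarrow A/I \cong \K^n$ shows that $B/J$ is finite-dimensional, hence of finite length; so $B$ is pseudocompact by Lemma~\ref{lem:pseudocompact}. Furthermore $A$, being a product of discrete fields, is topologically reduced by Lemma~\ref{lem:commutative semisimple}, and this property is inherited by subrings, so $B$ is topologically reduced as well. Applying Lemma~\ref{lem:commutative semisimple} to $B$ gives that $B$ is pseudocompact semisimple.

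The essential step, and the main obstacle, is to show that every open maximal ideal $\mathfrak{n}$ of $B$ has $\K$-codimension $1$. Pick a basic open $I = \m_{x_1} \cap \cdots \cap \m_{x_n}$ with $I \cap B \subseteq \mathfrak{n}$, and write $C = B/(I \cap B)$, a $\K$-subalgebra of $A/I \cong \K^n$. Being a finite-dimensional reduced commutative $\K$-algebra, $C$ decomposes as $C \cong F_1 \times \cdots \times F_r$ with each $F_j$ a field of finite $\K$-dimension. The corresponding central idempotents $e_j \in C \subseteq \K^n$ are necessarily $\{0,1\}$-valued, and for any coordinate $i$ with $(e_j)_i = 1$, the $i$-th coordinate projection restricts to a $\K$-algebra map $F_j \to \K$ sending $e_j \mapsto 1$; since $F_j$ is a field, this map is injective, forcing $F_j = \K$. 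Thus $C \cong \K^r$, and every maximal ideal of $C$ has $\K$-codimension $1$; in particular $\mathfrak{n}/(I \cap B)$, and hence $\mathfrak{n}$ itself, has codimension $1$ in $B$. Having verified all the conditions of Lemma~\ref{lem:function algebra}(a), we conclude via (a)$\Rightarrow$(b) that $B$ is a $\K$-function algebra.
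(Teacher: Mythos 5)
Your proof is correct, but it reaches the key step by a different route than the paper. Both proofs verify condition~(a) of Lemma~\ref{lem:function algebra} for the closed subalgebra, and both get completeness and the Hausdorff property from closedness in a complete Hausdorff ring; the divergence is in how pseudocompactness, semisimplicity, and the codimension-one condition are established.

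The paper works intrinsically with the open maximal ideals $\m_i$ of the ambient function algebra rather than reducing to $\K^X$. Its crucial observation is that since $\K \cdot 1$ lies in the closed subalgebra and each $\m_i$ has codimension~$1$, the contractions $\m_i' = \m_i \cap A$ \emph{immediately} have codimension~$1$ in $A$ (one line: $A = \K\cdot 1 \oplus \m_i'$), and these $\m_i'$ account for all open maximal ideals. Pseudocompactness then comes from the Chinese Remainder Theorem applied to finite intersections of the $\m_i'$, and Jacobson semisimplicity from $\bigcap \m_i' = 0$. You instead reduce to $\K^X$, obtain pseudocompactness from the embeddings $B/(I \cap B) \hookrightarrow \K^n$, get pseudocompact semisimplicity via topological reducedness (correctly noting it is inherited by subrings), and handle codimension~$1$ by a structural analysis: decomposing the finite-dimensional reduced $\K$-subalgebra $C = B/(I\cap B) \subseteq \K^n$ into a product of fields and using coordinate projections to force each factor to be $\K$. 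Your argument is somewhat more hands-on for the codimension-one step, where the paper's observation that $\K\cdot 1 \subseteq A$ makes it nearly automatic; conversely, your explicit identification $C \cong \K^r$ also delivers the pseudocompactness and semisimplicity statements as byproducts, so the overall economy is comparable. Both proofs are sound.
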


\begin{proof}
Let $B$ be a $\K$-function algebra and let $A \subseteq B$ be a closed subalgebra.
We will show that condition~(a) of Lemma~\ref{lem:function algebra} passes from $B$ to 
$A$. Because $A$ is closed in $B$ and $B$ is complete, $A$ is also complete. Also, $A$ is 
Hausdorff because $B$ is. Now let $\{\m_i\}$ denote the open maximal ideals of $B$ and set 
$\m_i' = \m_i \cap A$. These are open ideals of $A$, which still intersect to zero (since 
$\bigcap \m_i = 0$) and form a neighborhood subbasis of $0$ in $A$, making $A$ Jacobson
semisimple. 
Furthermore, as each $\m_i$ has codimension~1 in $B$, the same remains true of the $\m'_i$ 
in $A$. In particular, each $\m_i'$ is an open maximal ideal of $A$. 
To see that $A$ is pseudocompact, let $\U$ denote the neighborhood basis of $0$ in $A$
consisting of the intersections of finite subfamilies of $\{\m'_i\}$. By the Chinese Remainder
Theorem, for each $U \in \U$ the ring $A/U$ is a finite direct product of fields (hence artinian).
Thus $A$ is pseudocompact by Lemma~\ref{lem:pseudocompact}(a). As we have shown that $A$
is Jacobson semisimple with every open maximal ideal of codimension~1, we find that $A$ is
a function algebra.
\end{proof}

Now let $\Func(\K)$ denote the category of $\K$-function algebras with continuous $\K$-algebra
homomorphisms.  The discrete algebra $\K$ is an object of $\Func(\K)$, and the kernel of each
homomorphism $A \to \K$ in $\Func(\K)$ is an open maximal ideal of $A$, which is to say an
element of $\Spec_0(A)$. Conversely, if $\m \in \Spec_0(A)$ for a $\K$-function algebra $A$,
then $A/\m \cong \K$ as topological $\K$-algebras, giving a continuous $\K$-algebra morphism
$A \to \K$ with kernel $\m$. Thus we have a natural isomorphism between the functors
\begin{align*}
\Spec_0 &\colon \Func(K)\op \to \Set \quad \mbox{and} \\
\Hom(-, \K) &\colon \Func(K)\op \to \Set 
\end{align*}
given (in one direction) by sending $f \in \Hom(A,\K)$ to $\ker(f) \in \Spec_0(A)$. (Note that
this generalizes the case where $A = \K^X$ discussed before Lemma~\ref{lem:function algebra}.)

The formalities developed above allow us to describe $\K$-function algebras and the morphisms
relating them in the following precise way.

\begin{theorem}\label{thm:duality}
Let $\Func(\K)$ denote the category of $\K$-function algebras with continuous $\K$-algebra
homomorphisms. Then the representable functors
\begin{align*}
\Spec_0 \cong \Hom(-,\K) &\colon \Func(\K)\op \to \Set \\
\K^{-} = \Set(-,\K) &\colon \Set\op \to \Func(K)
\end{align*}
form a contravariant equivalence between $\Func(\K)$ and $\Set$.
\end{theorem}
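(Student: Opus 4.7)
The plan is to exhibit the unit and counit of the adjoint equivalence as explicit natural isomorphisms. Concretely, for each set $X$ I will construct $\eps_X \colon X \to \Spec_0(\K^X)$ by $x \mapsto \m_x$, and for each $\K$-function algebra $A$ I will construct $\eta_A \colon A \to \K^{\Spec_0(A)}$ by sending $a \in A$ to the function $\m \mapsto a + \m$, interpreted in $\K$ via the canonical identification $A/\m \cong \K$. The theorem will follow once I verify that both $\eps_X$ and $\eta_A$ are bijections (respectively, isomorphisms of function algebras) and that both are natural in their arguments.

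The bijectivity of $\eps_X$ was essentially already noted just before Lemma~\ref{lem:function algebra}: every open ideal of $\K^X$ contains a finite product of the $\m_x$, so each $\p \in \Spec_0(\K^X)$ must equal one of the $\m_x$, and the $\m_x$ are pairwise distinct because the indicator function $e_x$ of $\{x\}$ distinguishes them. That $\eta_A$ is an isomorphism in $\Func(\K)$ is exactly the content of condition~(c) of Lemma~\ref{lem:function algebra}, combined with the canonical bijection $\Hom(A,\K) \to \Spec_0(A)$, $\psi \mapsto \kernel(\psi)$, recorded in that same lemma. Thus the real substantive input is already packaged in Lemma~\ref{lem:function algebra}, and this step requires no new work.

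It remains to check naturality, which is routine diagram-chasing. For $\eps$: given a map of sets $\sigma \colon X \to Y$, the induced morphism $\K^\sigma \colon \K^Y \to \K^X$ acts by $f \mapsto f \circ \sigma$, and one checks directly that $(\K^\sigma)^{-1}(\m_x) = \m_{\sigma(x)}$ for each $x \in X$, so $\Spec_0(\K^\sigma) \circ \eps_X = \eps_Y \circ \sigma$. For $\eta$: given a morphism $\phi \colon A \to B$ in $\Func(\K)$ and $\m \in \Spec_0(B)$, the homomorphism $\phi$ induces a $\K$-algebra map $A/\phi^{-1}(\m) \hookrightarrow B/\m$; both quotients are identified with $\K$ by the codimension-one property, and any continuous $\K$-algebra map $\K \to \K$ is the identity, so the image of $a \in A$ in $A/\phi^{-1}(\m) \cong \K$ equals the image of $\phi(a)$ in $B/\m \cong \K$. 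This is exactly the compatibility needed to make the naturality square for $\eta$ commute.

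I expect no serious obstacle, since the heavy lifting — the reconstruction theorem $A \cong \K^{\Hom(A,\K)}$ — has already been established in Lemma~\ref{lem:function algebra}. The only delicate point is to make sure the identifications $\Hom(A,\K) \cong \Spec_0(A)$ and $A/\m \cong \K$ used above are \emph{canonical} (i.e., do not require any choice), which they are because an open maximal ideal of codimension~$1$ determines the projection $A \twoheadrightarrow A/\m \cong \K$ uniquely up to no ambiguity whatsoever — the $\K$-algebra structure on $A/\m$ admits only the identity automorphism as a $\K$-algebra map to $\K$.
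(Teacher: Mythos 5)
Your proposal is correct and follows essentially the same route as the paper: both construct explicit natural isomorphisms $\mathbf{1}_{\Set} \cong \Spec_0 \circ \K^-$ and $\mathbf{1}_{\Func(\K)} \cong \K^- \circ \Spec_0$, with the reconstruction isomorphism $A \cong \K^{\Hom(A,\K)}$ from Lemma~\ref{lem:function algebra}(c) carrying the weight and the naturality verifications being routine. The only cosmetic difference is that you phrase the unit/counit directly in terms of $\Spec_0$ (open maximal ideals $\m_x$, quotients $A/\m \cong \K$), whereas the paper routes through the isomorphic functor $\Hom(-,\K)$ (evaluation maps $\ev_x$, characters $\psi$); since the paper itself records the natural isomorphism $\Spec_0 \cong \Hom(-,\K)$ just before the theorem, the two formulations are interchangeable, and your observation that the identification $A/\m \cong \K$ is canonical (no nontrivial $\K$-algebra automorphism of $\K$) is exactly the point that makes that switch harmless.
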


\begin{proof}
Given a set $X$, as previously described we have a bijection $\eta_X \colon X 
\overset{\sim}{\longrightarrow} \Hom(\K^X,\K)$ given by $x \mapsto \ev_x$ for $x \in X$, which we
argue is natural in $X$. Fixing a morphism $\phi \colon X \to Y$ in $\Set$, we have an induced
morphism $\phi^* = \K^\phi \colon \K^Y \to \K^X$ in $\TAlg_\K$, which precomposes each
function in $\K^X$ with $\phi$. 
To describe the function $\Hom(\phi^*,\K) \colon \Hom(\K^X,\K) \to \Hom(\K^Y,\K)$, fix $x \in X$ 
and the corresponding evaluation map $\ev_x \in \Hom(\K^X,K)$. Then
$\Hom(\phi^*,\K)(\ev_x) = \ev_x \circ \phi^* \colon \K^X \to \K$. Given $f \in \K^X$, we have
\[
\ev_x \circ \phi^*(f) = \ev_x(\phi^*(f)) = \ev_x(f \circ \phi) = f(\phi(x)) = \ev_{\phi(x)}(f).
\]
So $\Hom(\phi^*,\K)(\ev_x) = \ev_{\phi(x)}$, verifying that the diagram
\[
\xymatrix{
X \ar[rr]^-{\eta_X} \ar[d]_{\phi} & & \Hom(\K^X,\K) \ar[d]^{\Hom(\K^\phi,\K)} \\
Y \ar[rr]^-{\eta_Y} & & \Hom(\K^Y,\K)
}
\]
commutes. Thus the $\eta_X$ form components of a natural isomorphism 
$\eta \colon \mathbf{1}_\Set \to \Hom(\K^{-},\K)$ of endofunctors of $\Set$.

Now suppose that $A$ is a $\K$-function algebra, and let $\eps_A \colon A \to \K^{\Hom(A,\K)}$
denote the natural map to the product in $\TAlg_\K$, given by 
$\eps_A(f) = (\psi(f))_{\psi \in \Hom(A,\K)}$. This is an isomorphism according to
Lemma~\ref{lem:function algebra}(c).
Let $g \colon A \to B$ be a morphism in $\Func(\K)$. Denote $g^* = \Hom(g,\K) \colon
\Hom(B,\K) \to \Hom(A,\K)$, given by precomposition with $g$. 
Given $f \in A$, we now alternately view $\eps_A(f) \in \K^{\Hom(A,\K)}$ as the function
$\eps_A(f) \colon \Hom(A,K) \to \K$ given by $\psi \mapsto \psi(f)$. So fixing $\psi \in \Hom(B,\K)$,
we have 
\[
(\K^{g^*} \circ \eps_A(f)) (\psi) = (\eps_A(f) \circ g^*)(\psi) = \eps_A(f)(\psi \circ g) = \psi(g(f))
= \eps_B(g(f))(\psi).
\]
It follows that the diagram
\[
\xymatrix{
A \ar[rr]^-{\eps_A} \ar[d]_{g} & & \K^{\Hom(A,\K)} \ar[d]^{\K^{\Hom(g,\K)}} \\
B \ar[rr]^-{\eps_B} & & \K^{\Hom(B,\K)}
}
\]
commutes, making the $\eps_A$ into the components of a natural isomorphism $\eps \colon 
\mathbf{1}_{\Func(K)} \to \K^{\Hom(-,\K)}$ of endofunctors of $\Func(K)$. This establishes
the desired contravariant equivalence between $\Set$ and $\Func(K)$.
\end{proof}

For us, the key application of the previous theorem is to determine the structure of an arbitrary
closed subalgebra of a $\K$-function algebra.

\begin{corollary}\label{cor:subalg}
Let $\K$ be a field, $I$ a set, and $A \subseteq \K^I$ a closed $\K$-subalgebra. Then $A\cong \K^J$,
as topological $\K$-algebras, for some set $J$ with $|J| \leq |I|$.
\end{corollary}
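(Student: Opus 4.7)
The plan is as follows. By Lemma~\ref{lem:closed function subalg}, the closed subalgebra $A$ of the $\K$-function algebra $\K^I$ is itself a $\K$-function algebra. Consequently, Lemma~\ref{lem:function algebra}(b)--(c) yields an isomorphism $A \cong \K^J$ of topological $\K$-algebras, where $J$ may be taken to be $\Hom(A,\K) \cong \Spec_0(A)$. The remaining task is to establish the cardinality bound $|J| \leq |I|$.

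The cleanest route is via Theorem~\ref{thm:duality}: the inclusion $\iota \colon A \hookrightarrow \K^I$ is clearly a monomorphism in the category $\Func(\K)$, and under the contravariant equivalence $\Spec_0 \colon \Func(\K)\op \to \Set$ monomorphisms correspond to surjections of sets. Since $\Spec_0(\K^I)$ is naturally identified with $I$ via $i \mapsto \ker(\ev_i)$, this gives a surjection $I \twoheadrightarrow J = \Spec_0(A)$, namely $i \mapsto \ker(\ev_i|_A)$, so $|J| \leq |I|$ as required.

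An elementary alternative, in the spirit of the exercise encouraged in the reader's guide, constructs this surjection directly and verifies it by hand. Define $\phi \colon I \to \Hom(A,\K)$ by $\phi(i) = \ev_i|_A$, which is well-defined and nonzero since $\ev_i(1) = 1$. The main obstacle is surjectivity of $\phi$. Given $\psi \in \Hom(A,\K)$ with $\m = \ker(\psi)$, openness of $\m$ in the subspace topology on $A$ supplies a finite set $F \subseteq I$ with $\bigcap_{j \in F} \ker(\ev_j|_A) \subseteq \m$. Each $\ker(\ev_j|_A)$ is a maximal ideal of $A$, so collecting the distinct ideals among them as $\m_1, \ldots, \m_k$, pairwise comaximality and the Chinese Remainder Theorem yield $\bigcap_i \m_i = \prod_i \m_i \subseteq \m$; primality of $\m$ then forces $\m = \m_i = \ker(\ev_j|_A)$ for some $j \in F$. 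Because every open maximal ideal of $A$ has $\K$-codimension one (Lemma~\ref{lem:function algebra}(a)), any two continuous $\K$-algebra homomorphisms $A \to \K$ sharing a kernel must agree, whence $\psi = \ev_j|_A = \phi(j)$, completing the argument.
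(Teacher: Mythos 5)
Your first paragraph is essentially the paper's own proof: Lemma~\ref{lem:closed function subalg} shows $A$ is a $\K$-function algebra, Lemma~\ref{lem:function algebra} gives $A \cong \K^{\Spec_0(A)}$, and Theorem~\ref{thm:duality} converts the monomorphism $A \hookrightarrow \K^I$ into a surjection $I \cong \Spec_0(\K^I) \twoheadrightarrow \Spec_0(A)$, giving the bound. Your second paragraph is a genuine addition: it carries out the elementary argument that the reader's guide explicitly invites, avoiding the categorical duality entirely. That argument is correct. Openness of $\m = \ker(\psi)$ (continuity of $\psi$ plus discreteness of $\K$) gives a finite $F \subseteq I$ with $\bigcap_{j \in F} \ker(\ev_j|_A) \subseteq \m$; the $\ker(\ev_j|_A)$ are maximal since each $\ev_j|_A$ is a surjective algebra map onto $\K$; pairwise comaximality and CRT turn the intersection into a product; primality of $\m$ and maximality force $\m = \ker(\ev_j|_A)$ for some $j \in F$; and codimension one (so $A = \K\cdot 1 \oplus \m$) forces $\psi = \ev_j|_A$. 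The categorical route is shorter once Theorem~\ref{thm:duality} is available, but the hands-on route is self-contained, requires only Lemmas~\ref{lem:closed function subalg} and \ref{lem:function algebra}(a), and makes it transparent exactly where the surjection $I \to \Spec_0(A)$ comes from.
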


\begin{proof}
Lemmas~\ref{lem:function algebra} and~\ref{lem:closed function subalg} yield that 
$A \cong \K^{\Spec_0(A)}$ is a function algebra, so that the inclusion 
map $A \hookrightarrow \K^I$ is a monomorphism in $\Func(\K)$. Thus applying $\Spec_0$ yields
an epimorphism in $\Set$ (i.e., a surjection) $I \cong \Spec_0(\K^I) \twoheadrightarrow \Spec_0(A)$. 
This implies that $|\Spec_0(A)| \leq |I|$. The claim follows by setting $J = \Spec_0(A)$.
\end{proof}

Even more precisely than the above, Theorem~\ref{thm:duality} allows one to characterize the closed
subalgebras of $\K^I$ (and their containments) in terms of equivalence relations $\sim$ on $I$ (and
their refinements), as closed subalgebras of $\K^I$ are dual to the surjections 
$I \twoheadrightarrow I/\sim$. We do not include further details as we will not make use of this
observation.

We also note that the above statement can be easily translated into a coalgebra statement and 
proved this way: a quotient of the coalgebra $C=\K^{(I)}$ has a basis of grouplike elements 
$(g_i)_{i\in I}$ and thus is isomorphic to $\K^{(J)}$ for some set of cardinality $|J| \leq |I|$. Indeed, 
if $p:C\rightarrow D$ is such a quotient, then each grouplike $g_i\in C$ produces a grouplike
$p(g_i)\in D$, and $D$ is spanned by these grouplike elements. Extracting a basis, we see 
that $D\cong \K^{(J)}$ as coalgebras. 

In general, the conclusion of the previous result fails for subalgebras of $\K^I$ that are not closed.

\begin{example}
Let $\K$ be an infinite field, and let $(\lambda_i)_{i\in I}$ be a tuple of elements in $\K$ for which 
the set $\{\lambda_i \mid i\in I\}$ is infinite. Then the element $\theta=(\lambda_i)_{i\in I}\in \K^I$
satisfies no polynomial in $\K[x]$, since $\{\theta^n : n \geq 0\}$ is linearly independent over $\K$,
and therefore the subalgebra $\K[\theta]\subseteq \K^I$ is isomorphic to $\K[x]$. In particular,
$\K[\theta]$ is not isomorphic to $\K^\Omega$ for any cardinal $\Omega$, since
$\K[\theta]$ is an integral domain and $\K^\Omega$ is not (for $\Omega > 1$).
\end{example}

\subsection{Characterizations of diagonalizable subalgebras}

We are now ready for our main result about diagonalization of algebras of operators.

\begin{theorem}\label{thm:diag}
Let $\K$ be a field, $V$ a $\K$-vector space, and $A \subseteq \End(V)$ a closed subalgebra. Then the following are equivalent:
\begin{enumerate}[label=(\alph*)]
\item $A$ is diagonalizable;
\item $A \cong \K^\Omega$ as topological $\K$-algebras;
\item $A$ is the closed subalgebra of $\End(V)$ generated by a summable set of orthogonal idempotents $\{E_i \mid i \in I\}$ with $\sum E_i = 1$;
\end{enumerate}
and the cardinal $\Omega$ in~(b) above must satisfy $\Omega \leq \dim(V)$.

Furthermore, if $\K$ is algebraically closed, then the above are also equivalent to:
\begin{enumerate}[label=(\alph*), resume]
\item $A$ is $\K$-pseudocompact and Jacobson semisimple.
\item $A$ is $\K$-pseudocompact and topologically reduced.
\end{enumerate}
\end{theorem}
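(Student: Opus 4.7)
The plan is to prove the cycle $(a) \Rightarrow (b) \Rightarrow (c) \Rightarrow (a)$, and then, under the hypothesis that $\K$ is algebraically closed, to attach $(d)$ and $(e)$ via $(b) \Rightarrow (d),(e)$, $(d) \Leftrightarrow (e)$, and $(d) \Rightarrow (b)$. I first observe that each of $(a)$, $(b)$, $(c)$ forces $A$ to be commutative (sitting inside $\diag(\B)$, being $\K^\Omega$, or being generated by commuting idempotents), so commutativity is automatic once inside the cycle and may be freely invoked when closing the loop from $(d)$.

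For $(a) \Rightarrow (b)$ I would pick a basis $\B$ with $A \subseteq \diag(\B)$. By Proposition~\ref{prop:diagonal structure}, $\diag(\B) \cong \K^\B$ as topological $\K$-algebras, and $A$ sits inside as a closed subalgebra of this function algebra. Corollary~\ref{cor:subalg} then delivers $A \cong \K^\Omega$ with $\Omega \leq |\B| = \dim(V)$, which simultaneously supplies the cardinality bound. For $(b) \Rightarrow (c)$ I would transport the canonical coordinate idempotents $e_i \in \K^\Omega$ across the isomorphism to an orthogonal family $\{E_i\} \subseteq A$; the finite sums of the $e_i$ converge to $1$ and their $\K$-linear span is dense in $\K^\Omega$, so the same holds for the $E_i$ in $A$, and the closed subalgebra they generate is $A$ itself. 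For $(c) \Rightarrow (a)$: Corollary~\ref{cor:sumconditions} yields $V = \bigoplus \range(E_i)$, and concatenating bases of each $\range(E_i)$ produces a basis $\B$ of $V$ on which every $E_i$ acts diagonally. Since $\diag(\B)$ is maximal commutative by Proposition~\ref{prop:diagonal structure} and hence closed by Lemma~\ref{lem:commutativeclosure}, it contains the closed subalgebra $A$ generated by the $E_i$.

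For the algebraically closed case, $(b) \Rightarrow (d),(e)$ is immediate, as $\K^\Omega$ is visibly $\K$-pseudocompact, Jacobson semisimple, and topologically reduced. The equivalence $(d) \Leftrightarrow (e)$ for commutative $A$ is Lemma~\ref{lem:commutative semisimple}. For $(d) \Rightarrow (b)$ I would invoke Lemma~\ref{lem:function algebra}: over an algebraically closed field, a commutative $\K$-pseudocompact Jacobson semisimple algebra is a $\K$-function algebra, hence topologically isomorphic to $\K^X$ for some set $X$.

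The substantive technical work has already been carried out earlier in the paper: the function-algebra duality of Theorem~\ref{thm:duality} underpins $(a) \Rightarrow (b)$ through Corollary~\ref{cor:subalg}, the summability calculus of Corollary~\ref{cor:sumconditions} drives $(c) \Rightarrow (a)$, and Lemma~\ref{lem:function algebra} handles $(d) \Rightarrow (b)$, so Theorem~\ref{thm:diag} itself is essentially a clean assembly of these ingredients. The one subtlety I would flag is that $(d) \Rightarrow (b)$ silently needs commutativity of $A$; this is recovered from the equivalence with $(a)$–$(c)$, but must be acknowledged when the implication is used in isolation.
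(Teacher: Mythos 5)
Your proposal is correct and follows essentially the same route as the paper. The only variation is in $(c)\Rightarrow(a)$, where you concatenate bases of the $\range(E_i)$ to build $\B$ and then conclude $A\subseteq\diag(\B)$ from the fact that $\diag(\B)$ is closed and contains the generating idempotents; the paper instead verifies by a direct density argument that each $\range(E_i)$ is a simultaneous eigenspace of $A$. Both are valid and of comparable length.

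Your commutativity flag is well-placed and worth making explicit. The theorem as stated in the body requires only that $A$ be a closed subalgebra, whereas the version of the same result in the introduction says ``closed commutative subalgebra,'' and the latter is what is really intended: without commutativity, $(d)\Rightarrow(b)$ fails outright (take $A=\M_n(\K)=\End(V)$ with $V=\K^n$, which is $\K$-pseudocompact, Jacobson semisimple, and not diagonalizable). The paper's appeal to Lemma~\ref{lem:function algebra} for the equivalence of (b), (d), (e) quietly depends on $\Spec_0$, which is defined only for commutative rings, so commutativity is being assumed there without comment. Thus your observation is not merely a pedantic refinement of the proof but points at an imprecision in the theorem's own hypotheses.
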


\begin{proof}
Suppose that (a) holds. Let $\B$ be a basis of $V$ such that $A$ is 
$\B$-diagonalizable, so that  $A$ is a closed subalgebra of the algebra $C$ of all $\B$-diagonalizable
operators. Now $C \cong \K^\B$ as topological algebras according to  Proposition~\ref{prop:diagonal structure}. 
It follows from Corollary~\ref{cor:subalg} that $A \cong \K^\Omega$ as topological $\K$-algebras for 
some cardinal $\Omega \leq |\B| = \dim(V)$, and~(b) is established. 

Suppose that (b) holds. Since $\K^\Omega$ is the closure of the subalgebra generated by a summable
set of orthogonal idempotents, summing to $1$ (namely, the characteristic functions on the singletons
of $\Omega$), the same is true of $A$, and hence (c) holds.

Suppose that~(c) holds. Let $A_0 = \Span\{E_i \mid i \in I\} \subseteq \End(V)$, which is a 
commutative subalgebra. 
Then $A = \overline{A_0}$ is commutative by Lemma~\ref{lem:commutativeclosure}(1).
Furthermore, $V=\bigoplus E_i(V)$, by Corollary~\ref{cor:sumconditions}. Note that
each $V_i = E_i(V)$ is a simultaneous eigenspace for $A_0$; we claim that these are also
eigenspaces for $A$. Indeed, given any $i \in I$, let $T \in A$ and $v \in V_i$. By the density of $A_0$
in $A$, there exists $T_0 \in A_0$ that belongs to the open neighborhood $\{S \in A \mid T(v) = S(v)\}$
of $T$ in $A$. But since $V_i$ is an eigenspace of $T_0$, this means that $T(v) = T_0(v) 
= \lambda v$ for some $\lambda \in \K$. This confirms that each $V_i$ is a simultaneous 
eigenspace for $A$, from which~(a) follows.

Assuming that $\K$ is algebraically closed, the equivalence of~(b), (d), and~(e) follows from
Lemma~\ref{lem:function algebra}.
\end{proof}

In Corollary~\ref{cor:diagsubalg} below, we apply the above theorem to characterize
diagonalizable subalgebras of $\End(V)$ that are not necessarily closed, still in terms of
the restriction of the finite topology. We make use of the following preparatory fact.

\begin{lemma}\label{lem:dense quotient}
Let $B$ be a topological ring with a dense subring $A \subseteq B$. For any open ideal
$I$ of $B$, the canonical map $A/(A \cap I) \to B/I$ is a (topological) isomorphism.
\end{lemma}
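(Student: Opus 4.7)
The plan is to verify that the map $\phi \colon A/(A \cap I) \to B/I$ sending $a + (A \cap I) \mapsto a + I$ is a well-defined ring homomorphism, is bijective, and is automatically a homeomorphism because both source and target are discrete.

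First, I would check that $\phi$ is a well-defined ring homomorphism with kernel $(A \cap I)/(A \cap I) = 0$; indeed, if $a \in A$ satisfies $a + I = 0$, then $a \in I$, and since also $a \in A$ we get $a \in A \cap I$. So $\phi$ is injective with no further work.

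Next comes the key step, surjectivity, which uses both hypotheses essentially. Given $b \in B$, the coset $b + I$ is an open subset of $B$ (as a translate of the open ideal $I$), so it is an open neighborhood of $b$. By density of $A$ in $B$, there exists $a \in A \cap (b + I)$, i.e., $a - b \in I$, so $\phi(a + (A \cap I)) = a + I = b + I$. This is the heart of the lemma.

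Finally, for the topological claim, since $I$ is open in $B$, Lemma~\ref{lem:open} gives that $B/I$ is discrete in the quotient topology. The intersection $A \cap I$ is open in $A$ (with respect to the subspace topology inherited from $B$), so the same lemma applied inside $A$ gives that $A/(A \cap I)$ is discrete in its quotient topology. Any bijective ring homomorphism between discrete topological rings is automatically a topological isomorphism, completing the proof. There is no serious obstacle here; the only nontrivial ingredient is the standard ``density meets open coset'' argument used for surjectivity.
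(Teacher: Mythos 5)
Your proof is correct and follows essentially the same approach as the paper's: injectivity is immediate, surjectivity follows from density applied to the open coset $b+I$, and the topological claim is settled by observing that both quotients are discrete via Lemma~\ref{lem:open}.
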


\begin{proof}
The canonical map $A/(A \cap I) \to B/I$ given by $a + (A \cap I) \mapsto a + I$ for $a \in A$ is
certainly injective. As Lemma~\ref{lem:open} implies that both factor rings have the discrete topology
(because the ideals are respectively open in $A$ and $B$), it suffices to prove that this map is surjective. 
To this end, fix a coset $b+I \in B/I$. Note that this coset is open as it is a translate of the open set 
$I$, so there exists an element $a \in A \cap (b+I)$ by the density of $A$. 
But then $a + (A \cap I) \mapsto a+I = b+I$ as desired.
\end{proof}

\begin{corollary}\label{cor:diagsubalg}
Let $V$ be a vector space over a field $\K$, 
and let $A$ be a subalgebra of $\End(V)$, considered as a topological algebra with
the topology inherited from the finite topology.
Then the following are equivalent:
\begin{enumerate}[label=(\alph*)] 
\item $A$ is diagonalizable;
\item $\overline{A}\cong \K^\Omega$ as topological algebras for some cardinal $\Omega$, which
necessarily satisfies $\Omega \leq \dim(V)$;
\item For every open ideal $I$ of $A$, there is an integer $n \geq 1$ such that $A/I \cong \K^n$
as $\K$-algebras.
\end{enumerate}
\end{corollary}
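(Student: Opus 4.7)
My plan is to link the three conditions through the closure $\overline{A}$, combining Theorem~\ref{thm:diag} applied to $\overline{A}$ with the quotient isomorphism in Lemma~\ref{lem:dense quotient}.

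For \textbf{(a)$\Leftrightarrow$(b)}, I would first observe that $\diag(\B)$ is closed in $\End(V)$ for every basis $\B$: by Proposition~\ref{prop:diagonal structure} it is a maximal commutative subalgebra, and these are closed by Lemma~\ref{lem:commutativeclosure}(3). Consequently $A \subseteq \diag(\B)$ if and only if $\overline{A} \subseteq \diag(\B)$, so $A$ is diagonalizable iff $\overline{A}$ is. Applying Theorem~\ref{thm:diag} to the closed commutative subalgebra $\overline{A}$ would then yield the equivalence with $\overline{A} \cong \K^\Omega$ together with the bound $\Omega \leq \dim(V)$.

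For \textbf{(b)$\Rightarrow$(c)}, I would exploit that under $\overline{A} \cong \K^\Omega$ the topology on $\overline{A}$ admits a neighborhood basis of $0$ consisting of open ideals $J = \bigcap_{x \in F} \m_x$ for finite $F \subseteq \Omega$, each with $\overline{A}/J \cong \K^{|F|}$. Given any open ideal $I$ of $A$, I would write $I = A \cap U$ for some open $U \subseteq \overline{A}$ and pick such $J \subseteq U$; then $A \cap J \subseteq I$, and Lemma~\ref{lem:dense quotient} yields $A/(A \cap J) \cong \overline{A}/J \cong \K^{|F|}$. Since $A/I$ is a quotient of this $\K^{|F|}$ by an ideal, and ideal quotients of $\K^m$ are of the form $\K^n$ with $n \leq m$, I would conclude $A/I \cong \K^n$.

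For \textbf{(c)$\Rightarrow$(b)}, the plan is first to transfer (c) to $\overline{A}$ via Lemma~\ref{lem:dense quotient}: for each open two-sided ideal $J$ of $\overline{A}$, the intersection $A \cap J$ is an open ideal of $A$, so (c) combined with $A/(A \cap J) \cong \overline{A}/J$ gives $\overline{A}/J \cong \K^n$. I would then verify the hypotheses of Lemma~\ref{lem:function algebra} on $\overline{A}$ to conclude $\overline{A} \cong \K^\Omega$, namely pseudocompact semisimplicity with open maximal ideals of codimension $1$. Granting that the open two-sided ideals of $\overline{A}$ form a neighborhood basis of $0$, pseudocompactness would follow from each $\overline{A}/J \cong \K^n$ being finite-dimensional, the codimension-$1$ condition from the maximal ideals of $\K^n$ having codimension $1$ in it, and Jacobson semisimplicity from Theorem~\ref{thm:WA.jq}(3) since each $\overline{A}/J$ has zero Jacobson radical.

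The hard part will be verifying that the open two-sided ideals of $\overline{A}$ do form a neighborhood basis of $0$, which is equivalent to the commutativity of $\overline{A}$. My approach is to show that for each $v \in V$ the $\overline{A}$-submodule $\overline{A}v \subseteq V$ is finite-dimensional; then its annihilator $\overline{A} \cap (\overline{A}v)^\perp$ is an open two-sided ideal of $\overline{A}$ contained in the open left ideal $\overline{A} \cap \{v\}^\perp$, and varying $v$ over $V$ would yield the required basis of open two-sided ideals. The finite-dimensionality of $\overline{A}v$ is the crux of the argument and will require a careful application of (c) to suitable open two-sided ideals arising from the $\End(V)$-structure of $A$.
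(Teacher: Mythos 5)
Your treatment of (a)$\Leftrightarrow$(b) and (b)$\Rightarrow$(c) is correct and follows the same route as the paper: for the first, observe that a diagonal algebra $\diag(\B)$ is maximal commutative (Proposition~\ref{prop:diagonal structure}) hence closed (Lemma~\ref{lem:commutativeclosure}(3)), so $A$ is diagonalizable iff $\overline{A}$ is, and apply Theorem~\ref{thm:diag}; for the second, pull back a cofinite product of maximal ideals $J \subseteq \overline{A}$ and use Lemma~\ref{lem:dense quotient}.

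For (c)$\Rightarrow$(b) you have correctly put your finger on the delicate point --- the argument needs $\overline{A}$ to be \emph{two-sided} pro-discrete, so that its open (two-sided) ideals form a neighborhood basis of $0$ and Lemma~\ref{lem:dense quotient} together with hypothesis~(c) can be applied to all of them and to see that they intersect to zero --- but the route you propose for establishing this is a dead end. There is no way to derive from condition~(c) alone that each $\overline{A}v$ is finite-dimensional, nor that the open two-sided ideals form a neighborhood basis. Consider, for instance, an infinite-dimensional space $W$ and $A = \K \times \End(W) \subseteq \End(\K \oplus W)$ embedded block-diagonally. Then $A$ is the commutant of the projection onto $\K$, hence closed by Lemma~\ref{lem:commutativeclosure}(2), so $\overline{A} = A$. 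Because $\End(W)$ has no proper open two-sided ideal when $\dim W$ is infinite (Remark~\ref{rem:WA.not}), one checks that the only proper open two-sided ideal of $A$ is $0 \times \End(W)$, with quotient $\K = \K^1$, so (c) holds. Yet $A$ is noncommutative and not diagonalizable, and $\overline{A}\cdot(0,w) = \{0\}\times W$ is infinite-dimensional, so the finite-dimensionality claim you hope to prove is simply false here. Your remark that a neighborhood basis of open two-sided ideals is ``equivalent to'' commutativity is also too strong --- commutativity implies it, but not conversely.

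What actually rescues the implication --- and what the paper is silently using --- is that $A$ should be assumed commutative (the whole section concerns commutative subalgebras, and this hypothesis has simply been left implicit in the corollary's statement). Then $\overline{A}$ is commutative by Lemma~\ref{lem:commutativeclosure}(1), and ``left pro-discrete,'' which Lemma~\ref{lem:closed prodiscrete} genuinely does give for the closed subring $\overline{A}$ of $\End(V)$, coincides with ``pro-discrete.'' Once you invoke that, the rest of your outline --- passing from every open $J \subseteq \overline{A}$ to $\overline{A}/J \cong A/(A\cap J) \cong \K^n$ via Lemma~\ref{lem:dense quotient} and~(c), deducing $\K$-pseudocompactness, codimension-$1$ open maximal ideals, $J_0(\overline{A}) = 0$, and applying Lemma~\ref{lem:function algebra} --- is exactly the paper's argument and goes through with no further work.
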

\begin{proof}
First we show (a)$\Leftrightarrow$(b).
Assuming~(a), fix a basis $\B$ such that $A$ is $\B$-diagonalizable, and let $C$ denote the set of $\B$-diagonalizable
operators on $V$. This is a maximal commutative subalgebra of $\End(V)$ by Proposition~\ref{prop:diagonal structure}, 
hence closed thanks to Lemma~\ref{lem:commutativeclosure}(3). Thus $\Abar \subseteq C$ is also diagonalizable,
and~(b) (along with the bound $\Omega \leq \dim(V)$) follows from Theorem~\ref{thm:diag}. 
Conversely, if~(b) holds then $\Abar$ is diagonalizable by Theorem~\ref{thm:diag}. As 
$A \subseteq \overline{A}$, we conclude that~(a) holds.

Next we show (b)$\Rightarrow$(c). Assuming~(b), let $I$ be an open ideal of $A$. 
Because the topology of $A$ is induced from that of $\overline{A}$, there is an open ideal $J$ of
$\Abar$ such that $J \cap A \subseteq I$ and $\Abar/J \cong \K^m$ for some integer $m \geq 1$
(thanks to the structure of $\K^\Omega$ as a topological algebra). Using 
Lemma~\ref{lem:dense quotient}, this means that we have a surjection $\K^m \cong \Abar/J \cong
A/(J \cap A) \twoheadrightarrow A/I$, from which it follows that $A/I \cong \K^n$
for some integer $n \leq m$. This verifies~(c). 

Conversely, assume~(c) holds. Note that $\Abar$ is pro-discrete by
Lemma~\ref{lem:closed prodiscrete}. Let $J \subseteq \Abar$ be an open ideal. Then 
$\Abar/J \cong A/(J \cap A) \cong \K^n$ as $\K$-algebras thanks to Lemma~\ref{lem:dense quotient}
and the hypothesis. It follows that $\Abar$ is $\K$-pseudocompact, that every open maximal ideal 
of $\Abar$ has $\K$-codimension equal to~1 (as $\K^n$ is a field if and only if $n = 1$), and 
that every open ideal of $\Abar$ is an intersection of open maximal ideals (as the intersection of the
maximal ideals in the discrete algebra $\Abar/J \cong \K^n$ is zero). Because $\Abar$ is pro-discrete, 
the intersection of all of its open ideals is zero; as each of these open ideals is an intersection of open
maximal ideals, we deduce that $J_0(\Abar) = 0$. 
Now Lemma~\ref{lem:function algebra} implies that~(b) holds.
\end{proof}

The conditions on diagonalizability of a commutative subalgebra may also be translated into a condition for
an operator $T \in \End(V)$ to be diagonalizable. Condition~(b) below is a topological substitute for the
characterization in the finite-dimensional case that the minimal polynomial of an operator splits.

\begin{proposition}\label{prop:operator}
Let $V$ be a vector space over a field $\K$, and let $T \in \End(V)$. 
Let $\sigma = \sigma(T)$ denote the spectrum of eigenvalues of $T$. Then the following are equivalent:
\begin{enumerate}[label=(\alph*)]
\item $T$ is diagonalizable;
\item The net of finite products of the form $(T-\lambda_1) \cdots (T - \lambda_n)$ for distinct 
$\lambda_i \in \sigma$ (indexed by the finite subsets 
$\{\lambda_1, \dots, \lambda_n\} \subseteq \sigma$) converges to zero;
\item For every finite-dimensional subspace $W \subseteq V$, there are distinct
$\lambda_1, \dots, \lambda_n \in \sigma$ such that the restriction of $(T - \lambda_1) \dots 
(T - \lambda_n)$ to $W$ is zero;
\item The closed $\K$-subalgebra $\overline{\K[T]} \subseteq \End(V)$ generated by $T$ is isomorphic 
to $\K^\Omega$ as a topological $\K$-algebra for some cardinal $\Omega \leq \dim(V)$.
\end{enumerate}
\end{proposition}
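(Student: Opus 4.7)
The plan is to deduce Proposition \ref{prop:operator} from Corollary \ref{cor:diagsubalg} applied to the subalgebra $A = \K[T] \subseteq \End(V)$, together with a short direct argument linking (a) to (c). The preliminary observation is that $T$ is diagonalizable if and only if the commutative subalgebra $\K[T]$ is diagonalizable: if $T$ is $\B$-diagonalizable then every polynomial in $T$ is $\B$-diagonalizable, so $\K[T] \subseteq \diag(\B)$; conversely, $T \in \K[T]$ yields the reverse implication. Thus (a) is equivalent to the diagonalizability of $\K[T]$, and the equivalence (a)$\Leftrightarrow$(d), together with the cardinality bound $\Omega \leq \dim(V)$, is then immediate from Corollary \ref{cor:diagsubalg} applied to $A = \K[T]$ (noting that $\overline{A} = \overline{\K[T]}$).

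For (b)$\Leftrightarrow$(c), I would unfold the definition of convergence in the finite topology on $\End(V)$: a neighborhood subbasis of $0$ is given by the sets $W^\perp$ for finite-dimensional subspaces $W \subseteq V$. The net in question is indexed by the directed set of finite subsets $S \subseteq \sigma$, and its terms commute, so $P_{S_0}|_W = 0$ for some $S_0$ forces $P_S|_W = 0$ for every finite $S \supseteq S_0$. Convergence of the net to zero thus amounts to the existence, for each finite-dimensional $W$, of a single finite subset $S_0 \subseteq \sigma$ whose associated product annihilates $W$; this is precisely condition~(c).

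For (a)$\Rightarrow$(c), fix a basis $\B$ of $V$ consisting of eigenvectors of $T$, and let $W \subseteq V$ be any finite-dimensional subspace. Each vector of a chosen basis of $W$ expands as a finite $\K$-linear combination of members of $\B$, and so only finitely many eigenvalues $\lambda_1,\dots,\lambda_n \in \sigma$ appear in these expansions; the commuting factors of $(T-\lambda_1)\cdots(T-\lambda_n)$ then annihilate each basis vector of $W$, and hence $W$ itself. For (c)$\Rightarrow$(a), apply (c) to $W = \Span(v)$ for each $v \in V$ to produce distinct $\lambda_1,\dots,\lambda_n \in \sigma$ with $(T-\lambda_1)\cdots(T-\lambda_n)v = 0$. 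The standard decomposition
\[
\kernel\!\left(\prod_{i=1}^n (T-\lambda_i)\right) = \bigoplus_{i=1}^n \kernel(T-\lambda_i),
\]
valid for distinct scalars (via the Chinese Remainder isomorphism $\K[x]/\prod_i (x-\lambda_i) \cong \prod_i \K$ acting on $V$), then exhibits $v$ as a finite sum of eigenvectors of $T$. Hence $V$ admits a basis of eigenvectors and $T$ is diagonalizable.

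I do not anticipate any serious obstacle: the bulk of the content is packaged in Corollary \ref{cor:diagsubalg}, the equivalence (b)$\Leftrightarrow$(c) is essentially a translation of the definition of the finite topology, and the implications between (a) and (c) are minor variations on the classical finite-dimensional diagonalization argument. The only subtlety worth double-checking is that the decomposition of the kernel of $\prod_i(T-\lambda_i)$ as a direct sum of eigenspaces really does go through in the infinite-dimensional setting (it does, since the argument via partial fractions is purely formal and applies vector-by-vector).
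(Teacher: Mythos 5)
Your proposal is correct and shares the main scaffolding with the paper's proof: both derive (a)$\Leftrightarrow$(d) directly from Corollary~\ref{cor:diagsubalg} applied to $\K[T]$, and both treat (b)$\Leftrightarrow$(c) as a straightforward unwinding of the finite topology (your observation that the factors commute, or equivalently that each $W^\perp$ is a left ideal, correctly disposes of the passage from a single annihilating finite product to cofinal convergence of the net). Where you diverge is in closing the cycle: you prove (c)$\Rightarrow$(a) directly, using the partial-fractions decomposition
\[
\kernel\Bigl(\prod_{i=1}^n (T-\lambda_i)\Bigr) = \bigoplus_{i=1}^n \kernel(T-\lambda_i)
\]
to exhibit each vector as a finite sum of eigenvectors, whereas the paper instead proves (b)$\Rightarrow$(d) by showing that each open ideal $I$ of $\K[T]$ has $\K[T]/I \cong \K^m$ (via a surjection from $\K[x]/(p(x)) \cong \K^n$ for the annihilating polynomial $p$), then feeding this into Corollary~\ref{cor:diagsubalg}(c). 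Both routes are valid and roughly equal in length; yours is more in the spirit of classical linear algebra (and the point you flag --- that the kernel decomposition is purely formal and goes through vector-by-vector in infinite dimensions --- is exactly the right thing to verify), while the paper's variant stays closer to the algebra-of-quotients viewpoint that organizes the rest of the section. Your (a)$\Rightarrow$(c) argument is essentially the same calculation as the paper's (a)$\Rightarrow$(b), just phrased at a basis of $W$ rather than a subset of $\B$. No gaps.
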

\begin{proof}
Note that $T$ is diagonalizable if and only if the subalgebra $\K[T] \subseteq \End(V)$ is diagonalizable.
Thus (a)$\Leftrightarrow$(d) follows from Corollary \ref{cor:diagsubalg}. Note also that 
(b)$\Leftrightarrow$(c) because~(c) is simply a reformulation of~(b) in terms of the finite topology.

For (a)$\Rightarrow$(b), suppose that $T$ is $\B$-diagonalizable for some basis $\B$ of $V$. To prove~(b),
note that the set of open neighborhoods of $\End(V)$ of the form
\[
U = \{S \in \End(V) \mid S(b_1) =  \dots = S(b_n) = 0\}
\]
for some finite subset $\{b_1,\dots,b_n\}\subseteq \B$ forms a neighborhood basis of zero (since any 
finite-dimensional subspace of $V$ is contained in the span of a sufficiently large but finite subset of 
$\B$).
Given such $b_1,\dots,b_n \in \B$, let $\alpha_1,\dots,\alpha_n \in \sigma$ denote the 
(possibly repeated) eigenvalues of $T$ associated to the $b_i$. Then for any finite subset 
$X \subseteq \sigma$ containing $\{\alpha_1, \dots, \alpha_n\}$, set 
$S = \prod_{\lambda \in X} (T-\lambda)$. 
Because the factors $T-\lambda I$ commute with one another and each $\alpha_i \in X$, we have 
$S(b_i) = 0$ for $i = 1, \dots, n$ so that $S \in U$. Thus~(b) is satisfied.

Now suppose that (b) holds; we verify~(d). Let $I \subseteq \K[T]$ be an open ideal. By hypothesis 
there exist  distinct $\lambda_1, \dots, \lambda_n \in \sigma$ such that, for the polynomial
$p(x) = (x-\lambda_1) \cdots (x - \lambda_n) \in \K[x]$, we have $p(T) \in I$. An easy application of
the Chinese Remainder Theorem implies that $\K[x]/(p(x)) \cong \K^n$ as $\K$-algebras.
Thus there is a surjection $\K^n \cong \K[x]/(p(x)) \twoheadrightarrow \K[T]/(p(T)) \twoheadrightarrow
\K[T]/I$ of  $\K$-algebras, from which it follows that $\K[T]/I \cong \K^m$ for some $m \leq n$. 
Then~(d)  holds by Corollary~\ref{cor:diagsubalg}.
\end{proof}

\subsection{Simultaneously diagonalizable operators}

It is well known that if two diagonalizable operators on a finite-dimensional vector space $V$
commute, then they are simultaneously diagonalizable. An immediate corollary (taking into account
the finite-dimensionality of $\End(V)$) is that an arbitrary commuting set of diagonalizable operators
on $V$ is simultaneously diagonalizable.

A carefully formulated analogue of this statement passes to the infinite-dimensional case, 
but a counterexample shows that the statement does not fully generalize in 
the strongest sense. We begin with the positive results. The following may be well-known in
other contexts, as it can also be proved with an adaptation of the classical argument that two 
commuting diagonalizable operators on a finite-dimensional vector space can be simultaneously 
diagonalized. We provide an alternative argument via summability of idempotents.

\begin{theorem}
Let $C, D \subseteq \End(V)$ be diagonalizable subalgebras that centralize one another. Then
$C$ and $D$ are simultaneously diagonalizable, in the sense that the subalgebra
$\K[C \cup D] \subseteq \End(V)$ generated by both sets is diagonalizable.
\end{theorem}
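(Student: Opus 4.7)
The plan is to produce a closed diagonalizable subalgebra $B \subseteq \End(V)$ with $C \cup D \subseteq B$; since any subalgebra of a diagonalizable subalgebra is diagonalizable (immediate from the definition), this will give that $\K[C \cup D]$ is diagonalizable.

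First I would pass to the closures $\overline{C}$ and $\overline{D}$. If $C \subseteq \diag(\B_C)$ for some basis $\B_C$ of $V$, then because $\diag(\B_C) \cong \K^{\B_C}$ is closed in $\End(V)$ (being complete as a topological algebra, by Proposition~\ref{prop:diagonal structure}), we have $\overline{C} \subseteq \diag(\B_C)$, so $\overline{C}$ is again diagonalizable; likewise for $\overline{D}$. The two closures still centralize one another: the commutant $D'$ is closed by Lemma~\ref{lem:commutativeclosure}(2), so $C \subseteq D'$ gives $\overline{C} \subseteq D'$, and the same argument applied to the closed commutant $(\overline{C})'$ yields $\overline{D} \subseteq (\overline{C})'$. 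In particular every element of $\overline{C}$ commutes with every element of $\overline{D}$.

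Next I would invoke Theorem~\ref{thm:diag}(a)$\Rightarrow$(c) to obtain summable orthogonal sets of idempotents $\{E_i \mid i \in I\} \subseteq \overline{C}$ and $\{F_j \mid j \in J\} \subseteq \overline{D}$ with $\sum E_i = 1 = \sum F_j$ whose linear spans are dense in $\overline{C}$ and $\overline{D}$, respectively. Since each $E_i$ commutes with each $F_j$, Lemma~\ref{lem:sumofproducts} applies to give that $\{E_i F_j \mid (i,j) \in I \times J\}$ is an orthogonal set of idempotents, summable with $\sum_{i,j} E_i F_j = 1 \cdot 1 = 1$. Let $B \subseteq \End(V)$ denote the closed subalgebra generated by $\{E_i F_j\}$. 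Then $B$ is diagonalizable by Theorem~\ref{thm:diag}(c)$\Rightarrow$(a).

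It remains to verify that $C \cup D \subseteq B$. For each $i \in I$, continuity of left-multiplication by $E_i$ in $\End(V)$ gives
\[
E_i \;=\; E_i \cdot \sum_j F_j \;=\; \sum_j E_i F_j,
\]
a limit of finite sums of elements of $B$; since $B$ is closed, $E_i \in B$. Thus the linear span of $\{E_i\}$ is contained in $B$, and because this span is dense in $\overline{C}$ while $B$ is closed, we conclude $C \subseteq \overline{C} \subseteq B$. By symmetry $D \subseteq B$, whence $\K[C \cup D] \subseteq B$ is diagonalizable. The main point where care is required is the upgrade of the centralizing hypothesis from $C,D$ to $\overline{C},\overline{D}$, but this is supplied cleanly by the closedness of commutants in Lemma~\ref{lem:commutativeclosure}(2); once that is in hand the proof reduces to a direct application of Lemma~\ref{lem:sumofproducts} and the idempotent characterization of closed diagonalizable subalgebras in Theorem~\ref{thm:diag}.
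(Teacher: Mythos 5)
Your proof is correct and takes essentially the same approach as the paper: reduce to closed subalgebras, extract summable orthogonal idempotent families via Theorem~\ref{thm:diag}, form the pairwise products $E_iF_j$ and apply Lemma~\ref{lem:sumofproducts}, and observe that the resulting closed diagonalizable algebra contains both $C$ and $D$. The one place you are more careful than the paper is in verifying explicitly (via the closedness of commutants, Lemma~\ref{lem:commutativeclosure}(2)) that $\overline{C}$ and $\overline{D}$ still centralize one another, a point the paper leaves implicit in its ``without loss of generality'' reduction.
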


\begin{proof}
Because a subalgebra of $\End(V)$ is diagonalizable if and only if its closure is diagonalizable,
and because $\K[\overline{C} \cup \overline{D}] \subseteq \overline{\K[C \cup D]}$, we may assume
without loss of generality that $C$ and $D$ are closed. In this case, by Theorem~\ref{thm:diag} both
$C$ and $D$ are respectively generated by orthogonal sets of idempotents
$\{E_i \mid i \in I\}$ and $\{F_j \mid j \in J\}$ such that $\sum E_i = 1 =  \sum F_j$.

Consider the set of pairwise products $\{E_i F_j \mid (i,j) \in I \times J\}$. By hypothesis, the $E_i$ 
and  $F_j$ pairwise commute, so that each $E_i F_j$ is again idempotent. For  $(i,j) \neq (m,n)$ 
we have that $E_i F_j$ is orthogonal to $E_m F_n$. Furthermore, from Lemma~\ref{lem:sumofproducts}
we find that
\begin{align*}
\sum\nolimits_j E_i F_j &= E_i \quad \mbox{for each } i \in I, \\
\sum\nolimits_i E_i F_j &= F_j \quad \mbox{for each } j \in J, \mbox{ and} \\
\sum\nolimits_{i,j} E_i F_j &= 1.
\end{align*}
So $\{E_i F_j\}$ is an orthogonal set of idempotents whose sum is~1, and it follows from
Theorem~\ref{thm:diag} that the closed subalgebra $A \subseteq \End(V)$ generated thereby 
is diagonalizable. 
But also each $E_i = \sum_j E_i F_j \in A$ and each $F_j = \sum_i E_i F_j \in A$. So 
$C, D \subseteq A$ and it follows that $\K[C \cup D] \subseteq A$ is diagonalizable.
\end{proof}

Specializing to the case when one or both of the subalgebras is generated by a single
operator, we immediately have the following.

\begin{corollary}
Any finite set of commuting diagonalizable operators in $\End(V)$ is simultaneously diagonalizable.
If $C \subseteq \End(V)$ and $T \in C'$ are diagonalizable, then the subalgebra $C[T] \subseteq \End(V)$
generated by $C$ and $T$ is diagonalizable.
\end{corollary}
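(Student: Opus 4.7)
The plan is to derive both assertions as immediate consequences of the preceding theorem on the simultaneous diagonalizability of two centralizing diagonalizable subalgebras. Neither statement requires new machinery; the work is entirely in arranging the inputs to that theorem.

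A preliminary observation I would record first is that for any individual diagonalizable operator $S \in \End(V)$, the singly generated subalgebra $\K[S] \subseteq \End(V)$ is itself diagonalizable: if $S$ is $\B$-diagonalizable for a basis $\B$ of $V$, then every element of $\K[S]$ acts diagonally on $\B$ as well, so $\K[S] \subseteq \diag(\B)$. With this in hand, I would prove the first claim by induction on the cardinality $n$ of a commuting set $\{T_1, \ldots, T_n\}$ of diagonalizable operators. The base case $n = 1$ is exactly the preliminary observation. For the inductive step, set $A = \K[T_1, \ldots, T_{n-1}]$, which is diagonalizable by the inductive hypothesis, and $B = \K[T_n]$, which is diagonalizable by the preliminary observation. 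Pairwise commutation of the $T_i$ forces every polynomial in $T_1,\dots,T_{n-1}$ to commute with every polynomial in $T_n$, so $A$ and $B$ centralize one another. The preceding theorem then yields that $\K[A \cup B] = \K[T_1, \ldots, T_n]$ is diagonalizable, completing the induction.

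For the second statement, the preliminary observation again gives that $\K[T]$ is diagonalizable, and the hypothesis $T \in C'$ implies $\K[T] \subseteq C'$, which is to say that $C$ and $\K[T]$ centralize one another. Applying the preceding theorem to the pair $(C, \K[T])$ gives that $\K[C \cup \K[T]] = C[T]$ is diagonalizable, as desired.

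The only minor points requiring care are the preliminary fact that $\K[S]$ is diagonalizable whenever $S$ is, and the trivial verification that $\K[C \cup \K[T]]$ coincides with $C[T]$ as subalgebras of $\End(V)$. I do not anticipate any substantial obstacle, as this corollary is essentially a direct repackaging of the theorem with the two centralizing subalgebras taken to be generated by the relevant operators.
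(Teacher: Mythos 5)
Your proposal is correct and follows essentially the same route the paper intends: the authors state the corollary as an immediate specialization of the preceding theorem to the case where one or both subalgebras is singly generated, and your induction plus the observation that $\K[S]$ is diagonalizable whenever $S$ is simply makes that specialization explicit. Nothing is missing.
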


Of course, an infinite set of commuting diagonalizable operators need not be simultaneously 
diagonalizable. Such a situation is provided by Example \ref{ex:not summable}. That maximal
commutative subalgebra $A$ contains orthogonal idempotents $\{E_n \mid n = 0, 1, 2, \dots\}$;
these are diagonalizable and commute. If they were simultaneously diagonalizable, then the 
algebra $A$ would be diagonalizable as it is generated by the $E_n$. Hence, we would have
$\Abar \cong \K^\Omega$ for $\Omega$ an infinite cardinal. But $A=\Abar$ by
Lemma~\ref{lem:commutativeclosure}(3), so $\Abar$ is countable-dimensional. This yields a 
contradiction, because $\dim(\K^\Omega)$ is uncountable.

Next we will present in Example~\ref{ex:not simultaneously diagonalizable} another construction 
of an infinite set of commuting diagonalizable operators need not be simultaneously diagonalizable, 
of a somewhat more subtle nature. 
For any integer $n \geq 0$, we let $\two^n = \{0,1\}^n$ denote the set of strings of length $n$ 
in the  alphabet $\{0,1\}$. For instance, we have $\two^0 = \{\varnothing\}$, $\two^1 = \{0,1\}$, 
and $\two^2 = \{00,01,10,11\}$. 
Given $i \in \two^n$ and $j \in \two^p$, we let $ij \in \two^{n+p}$ denote the concatenated 
string that consists of $i$ followed by $j$. 

\begin{lemma}
\label{lem:decomposition}
Let $V$ be a vector space over a field $\K$ with a countable basis $\{v_1, v_2, v_3, \dots\}$.
For every integer $n \geq 0$, there exist subspaces $V_i \subseteq V$ for all $i \in \two^n$ 
with $V_\varnothing = V$ satisfying the following conditions:
\begin{enumerate}[label=(\alph*)]
\item $\Span(v_1,\dots,v_n) \cap V_i = 0$ for every $i \in \two^n$;
\item $V_i = V_{i0} \oplus V_{i1}$ for every $i \in \two^n$;
\item $\dim(V_i) = \aleph_0$ for every $i \in \two^n$;
\item There is $w\in V$ such that $w\not\in \bigoplus_{j\in\two^n-\{i\}}V_j$, for all $i\in \two^n$.
\end{enumerate}
Consequently, $V = \bigoplus_{i \in \two^n} V_i$ for any $n \geq 0$, and for any sequence of
bits $b_1, b_2, \dots \in \{0,1\}$, we have $\bigcap_{n=1}^\infty V_{b_1 b_2 \dots b_n} = \{0\}$.
\end{lemma}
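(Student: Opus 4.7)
The plan is to prove the lemma by induction on $n$, building the tree of subspaces one level at a time. The base case $n = 0$ is immediate: set $V_\varnothing = V$; condition (a) is vacuous, (b) is empty, (c) holds since $\dim(V) = \aleph_0$, and (d) is witnessed by any nonzero $w \in V$.

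For the inductive step, suppose $\{V_i : i \in \two^n\}$ has been constructed satisfying (a)--(c) with $V = \bigoplus_{i \in \two^n} V_i$. Writing $U_m = \Span(v_1,\ldots,v_m)$, the key observation is that by (a) the linear map $V_i \cap U_{n+1} \to \K$ sending $u + \lambda v_{n+1} \mapsto \lambda$ is well-defined (from uniqueness $u \in U_n$, $\lambda \in \K$) with kernel $V_i \cap U_n = 0$, so $\dim(V_i \cap U_{n+1}) \leq 1$. This dichotomy drives the splitting. If $V_i \cap U_{n+1} = 0$, partition any basis of $V_i$ into two disjoint infinite subsets and let $V_{i0}, V_{i1}$ be their spans; both automatically avoid $U_{n+1}$. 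If instead $V_i \cap U_{n+1} = \K y$ with $y \neq 0$, extend $\{y\}$ to a basis $\{y, z_1, z_2, \ldots\}$ of $V_i$, change to the new basis $\{y+z_1,\, z_1,\, z_2,\, z_3,\, \ldots\}$, and partition this new basis into two infinite pieces with $y+z_1$ in one piece and $z_1$ in the other. A direct linear-independence check against this modified basis shows that any element of $V_{ij} \cap U_{n+1}$, being both a linear combination of the pieces' basis vectors and (since $V_{ij} \cap U_{n+1} \subseteq V_i \cap U_{n+1}$) a scalar multiple of $y$, must be zero.

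With these splittings chosen for every $i \in \two^n$, condition (a) at level $n+1$ holds by construction, (b) is tautological, and (c) holds because each partition is infinite. The global decomposition $V = \bigoplus_{k \in \two^{n+1}} V_k$ follows by combining the inductive hypothesis $V = \bigoplus_{i \in \two^n} V_i$ with each $V_i = V_{i0} \oplus V_{i1}$. Condition (d) is then automatic: pick a nonzero $w_k \in V_k$ for each $k \in \two^{n+1}$ (possible by (c)) and set $w = \sum_k w_k$; uniqueness of direct-sum decomposition forces $w \notin \bigoplus_{k' \neq k} V_{k'}$ for every $k$. For the final nested-intersection claim, any $x \in \bigcap_{n \geq 1} V_{b_1 \cdots b_n}$ is a finite linear combination of the $v_j$ and therefore lies in some $U_N$, whence $x \in V_{b_1 \cdots b_N} \cap U_N = 0$ by (a) at level $N$.

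The main obstacle is the second case of the inductive step: one must reshape the basis of $V_i$ so that the single "bad" line $\K y = V_i \cap U_{n+1}$ is spread across both halves, preventing either half from containing a nonzero multiple of $y$. Replacing $y$ by $y+z_1$ and then separating $y+z_1$ from $z_1$ in the new basis is the precise trick that achieves this, and it is the only step where the argument is not entirely routine.
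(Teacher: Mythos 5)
Your constructions for (a)--(c) and the final nested-intersection claim are correct, and the observation that $\dim(V_i \cap U_{n+1}) \leq 1$ drives the same case split the paper uses. But your treatment of condition (d) has a genuine gap: you choose a fresh $w$ at each level by summing arbitrary nonzero $w_k \in V_k$ for $k \in \two^{n+1}$, so the vector you produce depends on $n$. The paper's proof instead fixes the single vector $w = v_1$ at the base case and carries it through the whole induction, so that one $w$ witnesses (d) for every $n$ simultaneously. This is not a cosmetic difference: in Example~\ref{ex:not simultaneously diagonalizable} one takes that single $w$, chooses one idempotent $E$ with $w \in \ker(E)$ and $\ker(E)$ finite-dimensional, and shows the single open left ideal $\End(V)E$ meets every $A_n$ trivially. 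If $w$ varied with $n$, no one finite-dimensional $\ker(E)$ could contain them all, and the conclusion that $A$ is discrete would not follow.

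Making $w$ uniform forces the splitting of $V_i$ to be compatible with it. Writing $w_i$ for the $V_i$-component of the fixed $w$, you must arrange $V_i = V_{i0} \oplus V_{i1}$ so that $w_i$ decomposes as a sum of \emph{nonzero} pieces in $V_{i0}$ and $V_{i1}$, while still keeping $\Span(v_1,\dots,v_{n+1}) \cap V_{i0} = \Span(v_1,\dots,v_{n+1}) \cap V_{i1} = 0$. Your basis reshuffle $\{y+z_1, z_1, z_2, \dots\}$ achieves the second requirement but makes no reference to $w_i$; nothing prevents $w_i$ from landing entirely in one of your two halves, which would kill condition (d) at the next level for the fixed $w$. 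The paper handles this with a two-case analysis according to whether the at-most-one-dimensional space $\Span(v_1,\dots,v_{n+1}) \cap V_i$ lies inside $\Span(w_i)$ or not, in each case choosing linearly independent $a,b$ with $w_i = a + b$ and placing $a$ and $b$ in opposite halves (together with a small perturbation to dodge the bad line). Your argument needs the analogous refinement: carry the nonvanishing of every component $w_i$ of the fixed $w$ as part of the inductive hypothesis, and split each $w_i$ nontrivially at each step.
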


\begin{proof}
We proceed by induction. The case $n = 0$ is covered by simply setting $V_\varnothing = V$, and 
choosing $w=v_1$.

For the inductive step, assume that we have constructed $V_i$ for all strings $i$ of length less
than $n$. Let $i \in \two^{n-1}$; we will define $V_{i0}$ and $V_{i1}$. Since $\Span(v_1,\dots,v_{n-1})\cap V_i=0$, 
it follows that $\Span(v_1,\dots,v_n)\cap V_i$ can be at most $1$-dimensional. Let also 
$w=\sum_{i\in\two^{n-1}}w_i$ be the decomposition of $w$ with respect to $V=\bigoplus_{i\in\two^{n-1}}V_i$; 
by the inductive hypothesis, all $w_i$ are nonzero. In order to construct $V_{i0}$ and $V_{i1}$ in such a way
as to satisfy condition~(d), we distinguish two cases: 
\begin{enumerate}
\item $\Span(v_1, \dots, v_n) \cap V_i \subseteq \Span(w_i)$. 
We write $w_i=a+b$ inside $V_i$, with $a,b$ independent, 
and we choose $V_{i0}$ and $V_{i1}$ to be (countably) infinite-dimensional subspaces of $V_i$ which 
split $V_i$ such that $a\in V_{i0}$ and $b\in V_{i1}$ (by completing $\{a,b\}$ to a basis and splitting 
the basis appropriately). 
\item $\Span(v_1,\dots,v_n)\cap V_i=\Span(g)$ and $g,w_i$ are linearly independent. In this case, let $a,b\in V_i$ be such that $\{g,w_i,a,b\}$ are linearly independent. Then the set $\{g-a,w_i-b,a,b\}$ is also linearly independent, and we can choose $V_{i0}, V_{i1}$ be infinite-dimensional with $V=V_{i0}\oplus V_{i1}$ and $g-a,w_i-b\in V_{i0}$ and $a,b\in V_{i1}$. This shows that $w_i=(w_i-b)+(b)$ (and also $g=(g-a)+(a)$) has non-zero components in $V_{i0}$ and $V_{i1}$, and $\Span(v_1,\dots,v_n)\cap V_{i0}=\Span(v_1,\dots,v_n)\cap V_i \cap V_{i0}=\Span(g)\cap V_{i0}=0$ and similarly $\Span(v_1,\dots,v_n) \cap V_{i1} = \{0\}$.
\end{enumerate}
This completes the proof by induction.

It remains to verify the final two claims of the statement.
For any $n \geq 1$, the direct sum decomposition $V = \bigoplus_{i \in \two^n} V_i$ 
follows from $V_\varnothing = V$ and condition~(b). Finally, given $b_1, b_2, \dots \in \{0,1\}$, 
using condition~(a) we get that $V_{b_1 \cdots b_n} \cap \Span(v_1,\dots,v_n) = \{0\}$.
Since $V = \bigcup_{n=1}^\infty \Span(v_1,\dots,v_n)$, we conclude that
$\bigcap_{n=1}^\infty V_{b_1 b_2 \dots b_n} = \{0\}$.
\end{proof}

Let $\two^* = \bigsqcup_{n=0}^\infty \two^n$ denote the set of all words in the alphabet 
$\two = \{0,1\}$.

A version of the lemma above holds for a vector space $V$ of arbitrarily large infinite dimension, 
if we delete condition (d). This can be shown by decomposing $V = \bigoplus_{j \in \dim(V)} V_j$
as a direct sum of vector spaces of dimension $\aleph_0$, constructing subspaces 
$W_{j,i} \subseteq V_j$ for each $i \in 2^*$ that satisfy (a)--(c), and then setting 
$V_i = \bigoplus_j W_{j,i}$.

\begin{example}\label{ex:not simultaneously diagonalizable}
Let $V$ be a $\K$-vector space of countably infinite dimension. Fix subspaces $V_i \subseteq V$
for all $i \in \two^*$ as in Lemma~\ref{lem:decomposition}. 
Fixing $n \geq 0$ and $i \in \two^n$, let $E_i \in \End(V)$ be the idempotent whose range is $V_i$ 
and whose kernel is $W_i = \bigoplus_{j\in \two^n \setminus \{i\}} V_j$. Then the 
$\{E_i \mid i \in \two^n\}$ form orthogonal sets of idempotents such that $1 = \sum_{i \in \two^n} E_i$.
Set $A_n = \bigoplus_{i \in \two^n} \K E_i \subseteq \End(V)$, the commutative subalgebra generated 
by the idempotents indexed by $\two^n$. 
Condition~(b) of Lemma~\ref{lem:decomposition} furthermore ensures that each $E_i = E_{i0} + E_{i1}$.
Thus we have $A_n \subseteq A_{n+1}$ for all $n$. Then $A = \bigcup A_n$ is a commutative
subalgebra of $\End(V)$, generated by the infinite set $\{E_i \mid i \in \two^n, n \geq 0\}$ 
of commuting idempotents. Being idempotent operators, these generators are diagonalizable.

We claim that \emph{the algebra $A$ is not diagonalizable,} and so the idempotents $E_i$ for 
$i\in \bigcup_{n\geq 0}\two^n$ cannot be simultaneously diagonalized. In fact, they have no common 
eigenvector (i.e. $V$ has no 1-dimensional $A$-submodule). Indeed,  assume for contradiction
that $v \in V \setminus \{0\}$ is an eigenvector for all of the idempotents $E_i$. The eigenvalue 
$\lambda_i$ of $E_i$ corresponding to the vector $v$ is either $0$ or $1$. From 
$1_V = \sum_{i \in \two^n} E_i$ we see that, for $i \in \two^n$, exactly one of the $E_i$ has 
eigenvalue~$1$ and the rest have eigenvalue~$0$; this means that $v \in V_{i_n}$
for exactly one $i_n \in \two^n$. Note that each $i_n = i_{n-1} b_n$ for some $b_n \in \{0,1\}$.
This means that there is a sequence $b_1, b_2, b_3, \dots \in \{0,1\}$ such that 
$v \in \bigcap_{n = 0}^\infty V_{b_1 \cdots b_n}$, which contradicts the fact that the latter
intersection is zero by condition~(b) of Lemma~\ref{lem:decomposition}.

Finally, we claim that \emph{$A$ is a discrete subalgebra of $\End(V)$.} 
Indeed, let $w$ denote the vector provided by Lemma~\ref{lem:decomposition}(d). 
Let $E$ be an idempotent such that $w\in \ker(E)$ and $\ker(E)$ is finite-dimensional. Then the left ideal 
$I=\End(V)E$ is open. We show that $I\cap A=0$, which will imply that $0$ is open, so $A$ is discrete. 
If $I\cap A\neq 0$ then $I\cap A_n\neq 0$ for some $n$. Let $T=\sum_{i \in \two^n} \lambda_i E_i 
\in I \cap A_n$. If $F=\{i\in\two^n \mid \lambda_i\neq 0\}$, then $F\neq \two^n$ since otherwise 
$T$ would be invertible. 
Also $\ker(\sum_{i\in F}\lambda_i E_i)=\bigoplus_{i\not\in F}V_i$, and since $E(w)=0$, it follows that 
$w \in \bigoplus_{i\not\in F}V_i$. But by construction, $w$ has non-zero components in all terms 
$\bigoplus_{i\in \two^n} V_i$. This contradicts the fact that $F\neq \two^n$, completing the argument. 
\end{example}

We note a few more properties of the algebra $A$. We remark that as $A$ is the union of the $A_n$ 
which are von Neumann regular, the same property holds for $A$. In fact, algebraically each
$A_n\cong \K^{\two^n}$, and the inclusion $A_n\subseteq A_{n+1}$ corresponds to the diagonal
embedding of $\K^{\two^n}$ into 
$\K^{\two^{n+1}}=\K^{\two^n0\,\sqcup \,\two^n1}\cong \K^{\two^n}\times \K^{\two^n}$ (i.e., with 
each  $\K\rightarrow \K\times\K$, $1\mapsto (1,1)$). 
Now because $A$ is commutative and von Neumann regular, it is reduced. Being both reduced and
discrete, we see that $A$ is also topologically reduced.

We leave open the interesting question of what is the closure of $A$ within $\End(V)$, especially
to what extent this depends on the particular choice of the subspaces $V_i$ for $i \in \two^*$.
It would also be interesting to understand to what extent the closure would change if one omitted
condition~(d) from Lemma~\ref{lem:decomposition}, which forced $A$ to be discrete.

\subsection{Closure of the set of diagonalizable operators}

Finally, we examine the closure of the set $\D(V)$ of diagonalizable operators within $\End(V)$. 
If $T\in\End(V)$, let us denote by $H(T)$ the set of all $v\in V$ for which $p(T)v=0$ for some nonzero 
polynomial $p(x) \in \K[x]$. If we consider $V$ as a $\K[x]$-module with $x$ acting as $T$, this 
means that $H(T)$ is the torsion part of $V$, or equivalently, the sum of all of its finite-dimensional 
$\K[x]$-submodules. We will show that the $T$-invariant subspace $H(T)$ plays an important role 
in the characterization of $\overline{\D(V)}$ in the case of vector spaces over infinite fields. 

We need the following easy remark: if $W$ is a finite-dimensional vector space over 
an infinite field $\K$ with basis $v_0,v_1,\dots,v_n$, then there is $w\in W$ such that the linear map 
$L\in\End(W)$ defined by $L(v_i)=v_{i+1}$ for $i<n$ and $L(v_n)=w$ is diagonalizable; we show that 
in fact we can choose $w=\sum_{i=0}^na_iv_i$ with $a_0,a_1,\dots,a_n\in\K$. Indeed, for this it is 
enough to show that the characteritic polynomial $f_L$ of $L$ has all simple roots in $\K$. 
But $f_L$ has exactly the coefficients $-a_i$, and we can choose the $a_i$ as the coefficients of the
polynomial $\prod_{i=0}^n(x-\lambda_i)$, where $\lambda_i$ are pairwise distinct elements in $\K$.  
We have the following.

\begin{theorem}
Let $\K$ be an infinite field and $T\in\End(V)$. Then $T \in \overline{\D(V)}$ if and only if $T$ is 
diagonalizable on $H(T)$.
\end{theorem}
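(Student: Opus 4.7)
The plan is to prove both implications by exploiting the $\K[x]$-module structure on $V$ (with $x$ acting as $T$) together with the density condition in the finite topology on $\D(V)$.

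For the forward direction, suppose $T \in \overline{\D(V)}$ and fix $v \in H(T)$. Since $v$ is $\K[T]$-torsion, the subspace $W = \K[T] v \subseteq H(T)$ is finite-dimensional and $T$-invariant. Choosing a $\K$-basis $w_1, \ldots, w_m$ of $W$, the basic open neighborhood $\{S \in \End(V) \mid S(w_i) = T(w_i),\ i = 1, \ldots, m\}$ of $T$ meets $\D(V)$; any such $S$ satisfies $S|_W = T|_W$, so $W$ is $S$-invariant, and the restriction of a diagonalizable operator to an invariant subspace is again diagonalizable (use Lagrange interpolation polynomials in $S$ to extract the eigencomponents of any $w \in W$). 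Thus $T|_W$ is diagonalizable and $v$ is a $\K$-linear combination of $T$-eigenvectors contained in $W \subseteq H(T)$. Since $v$ was arbitrary, such eigenvectors span $H(T)$, and $T|_{H(T)}$ is diagonalizable.

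For the converse, assume $T|_{H(T)}$ is diagonalizable and fix a basic neighborhood $U = \{S \mid S(x_i) = T(x_i),\ i = 1, \ldots, n\}$ of $T$; the goal is to produce $S \in U \cap \D(V)$. Consider the $\K[x]$-submodule $M = \K[T] \cdot \Span(x_1, \ldots, x_n)$ of $V$. Being finitely generated over the PID $\K[x]$, the structure theorem yields $M = F \oplus N$ with $F$ free of some finite rank $k$ and $N$ the torsion submodule, which is automatically finite-dimensional. Since $N \subseteq H(T)$, the restriction $T|_N$ is diagonalizable by the invariant-subspace principle used above. Decomposing each $x_i = f_i + n_i$ accordingly, it suffices to construct an operator $S$ on $V$ that is diagonalizable and agrees with $T$ on $N$ and on each $f_i$, for then by linearity $S(x_i) = T(x_i)$.

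The main obstacle lies in handling the free part $F$, where $T$ acts as an infinite shift and has no eigenvectors. Fix $\K[x]$-generators $e_1, \ldots, e_k$ of $F$, so that $\{T^i e_j : i \geq 0,\ 1 \leq j \leq k\}$ is a $\K$-basis of $F$, and each $f_i$ is a $\K$-linear combination of $T^i e_j$ with $i \leq d$ for some fixed $d$. For each $j$, I apply the remark preceding the theorem to the subspace $W_j = \Span(e_j, Te_j, \ldots, T^{d+1} e_j)$: since $\K$ is infinite, there exists $w_j \in W_j$ making the operator $L_j \in \End(W_j)$ defined by $L_j(T^i e_j) = T^{i+1} e_j$ for $0 \leq i \leq d$ and $L_j(T^{d+1} e_j) = w_j$ diagonalizable. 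Setting $S|_{W_j} := L_j$ makes $W_j$ an $S$-invariant subspace on which $S$ agrees with $T$ on the first $d+1$ basis vectors, so $S(f_i) = T(f_i)$ for every $i$. Setting $S = T|_N$ on $N$ and defining $S$ to act as $0$ on any vector-space complement of the direct sum $N \oplus \bigoplus_j W_j$ in $V$ (which is a direct sum because $\bigoplus_j W_j \subseteq F$ and $F \cap N = 0$) produces an operator whose restrictions to the $S$-invariant summands $N$, each $W_j$, and the complement are all diagonalizable. Consequently $S$ is diagonalizable on $V$, and $S(x_i) = S(f_i) + S(n_i) = T(f_i) + T(n_i) = T(x_i)$, so $S \in U \cap \D(V)$ as required.
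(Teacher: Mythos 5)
Your proof is correct and follows essentially the same strategy as the paper: for the forward direction, you restrict a diagonalizable approximant to the finite-dimensional $T$-invariant subspace $\K[T]v$ and use that restrictions of diagonalizable operators to invariant subspaces are diagonalizable; for the converse, you decompose $M = \K[x]W$ (for $W$ spanning the defining vectors of a basic neighborhood) into torsion plus free over the PID $\K[x]$, use the hypothesis on the torsion piece, and invoke the remark that a truncated shift on a finite-dimensional space over an infinite field can be completed to a diagonalizable operator. One point in your favor: you work throughout with the torsion $\K[x]$-submodule $N$ of $M$, which is automatically $T$-invariant and contained in $H(T)$, whereas the paper's write-up phrases the decomposition in terms of $W \cap H(T)$ (identified with the torsion of $M$), which is not $T$-invariant in general and need not coincide with the torsion of $M$; your formulation sidesteps that imprecision cleanly.
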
 
\begin{proof}
If $T \in \overline{\D(V)}$, then for every finite-dimensional $T$-invariant subspace $W$ of 
$H(T)$, we can find $D\in\D(V)$ such that $D=T$ on $W$. Since $W$ is $T$-invariant, it is 
$D$-invariant, and $D$ is diagonalizable on $W$ since it is on $V$ ($W$ is a submodule of the 
semisimple $D$-module $V$). Hence $T$ is diagonalizable on $W$, and $W$ is a (possibly trivial)
sum of $1$-dimensional $T$-invariant subspaces. As $H(T)$ is the sum of all its finite-dimensional 
subspaces, it follows that it is 
a sum of $1$-dimensional $T$-invariant subspaces. This is to say that $H(T)$ is spanned by a set
of $T$-eigenvectors. This spanning set contains a basis of $H(T)$. Thus $H(T)$ has a 
basis consisting of $T$-eigenvectors, and $T$ is diagonalizable on $H(T)$. 

Conversely, assume $T$ is diagonalizable on $H(T)$. Let $W\subseteq V$ be finite-dimensional, 
and write $W=(H(T)\cap W)\oplus U$. Consider $V$ as a $\K[x]$-module under the action of $T$. 
Then $M=\K[x]W$ is finitely generated, and its torsion part is $H(T) \cap M = H(T) \cap W$. 
Hence, we may find a free $\K[x]$-module $N$ ($T$-invariant subspace) such that 
$M=(M\cap H(T)) \oplus N$, with $N\cong \K[x]^t$. Let $w_1,\dots,w_t$ be a $\K[x]$-basis 
of $N$. Then a $\K$-basis for $N$ is given by $\{T^i(w_k) \mid i\geq 0,\ 1\leq k \leq t\}$, 
and because $W$ is finite-dimensional there exists $n$ such that $W\subseteq (W\cap H(T)) \oplus 
\Span\{T^i(w_k) \mid i<n, 1 \leq k \leq t\}$. Let $D\in \End(V)$ be the linear map defined as follows:
\begin{itemize}
\item $D$ equals $T$ on $H(T) \cap W$;
\item $D$ equals T on the finite set $\{T^i(w_k) \mid i<n, 1\leq k\leq t\}$ and 
  $D(T^n(w_k))=u_k$ where $u_k$ is chosen as above such that 
  $u_k \in W_k = {\rm Span}\{T^i(w_k) \mid i\leq n\}$ and $D$ is diagonalizable 
  on $W_k$;
\item $D$ equals $0$ on a complement $L$ of $(H(T) \cap W) \oplus \left(\bigoplus W_k\right)$.
\end{itemize}
By construction we have $T$ equal to $D$ on $W \subseteq (H(T) \cap W) \oplus 
\left(\bigoplus W_k\right)$. Since $D$ is diagonalizable on each of the invariant subspaces 
$H(T) \cap W$, $W_k$, and $L$, it follows that $D$ is diagonalizable on $V = (H(T) \cap W) 
\oplus \left(\bigoplus W_k\right) \oplus L$. This shows that every open neighborhood of 
$T$ contains some diagonalizable $D$ (since $D=T$ on $W$), and the proof is finished. 
\end{proof}

The above shows that in fact the closure of the set of diagonalizable operators coincides 
with the closure of the set of diagonalizable operators of \emph{finite rank} on $V$.

We give an example below to show that the above characterization of the closure of
$\D(V)$ fails if the field $\K$ is finite. Before doing so, we will in fact show that 
$\D(V)$ is closed if the field of scalars is finite. Given a prime power $q$, we let
$\F_q$ denote the field with $q$ elements.

\begin{proposition}
Let $V$ be a vector space over a finite field $\F_q$. An operator $T \in \End(V)$ is diagonalizable if 
and only if it satisfies $T^q = T$. Consequently, the set $\D(V)$ of diagonalizable operators is closed 
in $\End(V)$.
\end{proposition}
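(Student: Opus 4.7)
The plan is to verify the polynomial identity via a direct eigenvalue argument in one direction and a Chinese Remainder Theorem decomposition in the other, then deduce closedness from the continuity of polynomial operations on $\End(V)$.

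For the forward direction, I would suppose $T$ is $\B$-diagonalizable for some basis $\B$ and invoke Fermat's little theorem: every eigenvalue $\lambda \in \F_q$ satisfies $\lambda^q = \lambda$. Thus for each $b \in \B$ with $Tb = \lambda b$, one has $T^q b = \lambda^q b = \lambda b = Tb$. Since $T^q$ and $T$ agree on a spanning set, $T^q = T$.

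For the reverse direction, suppose $T^q = T$. Then the polynomial $P(x) = x^q - x = \prod_{\lambda \in \F_q}(x-\lambda) \in \F_q[x]$ satisfies $P(T) = 0$, and its linear factors are pairwise coprime. By a standard application of the Chinese Remainder Theorem, the induced map $\F_q[x]/(P) \cong \prod_{\lambda \in \F_q} \F_q \to \End(V)$ yields an orthogonal family of idempotents $\{E_\lambda\}_{\lambda \in \F_q}$ with $\sum E_\lambda = 1$, where $E_\lambda$ projects $V$ onto $\ker(T - \lambda)$. Hence $V = \bigoplus_{\lambda \in \F_q} \ker(T - \lambda)$, and choosing a basis for each summand gives a basis of $V$ consisting of eigenvectors for $T$. (Equivalently, one can invoke condition~(c) of Proposition~\ref{prop:operator}, since $P(T)|_W = 0$ for every $W$.)

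For the closedness assertion, I would observe that the map $\phi \colon \End(V) \to \End(V)$ defined by $\phi(T) = T^q - T$ is continuous, since multiplication and subtraction are continuous in the topological ring $\End(V)$. By the equivalence just established, $\D(V) = \phi^{-1}(\{0\})$; since $\{0\}$ is closed in the Hausdorff space $\End(V)$, so is $\D(V)$. The only potential obstacle is the CRT decomposition in the infinite-dimensional setting, but this is purely algebraic and works identically to the finite-dimensional case because the polynomial $P$ has only finitely many coprime factors.
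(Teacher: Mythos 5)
Your proof is correct and takes essentially the same approach as the paper's: the forward direction via $\lambda^q = \lambda$ for all eigenvalues, the reverse direction via the squarefree split polynomial $x^q - x$ annihilating $T$, and closedness via continuity of $T \mapsto T^q - T$. The only cosmetic difference is in the reverse direction, where the paper passes through the minimal polynomial of $T$ and then invokes Proposition~\ref{prop:operator}, whereas you carry out the Chinese Remainder Theorem eigenspace decomposition directly — as you yourself note, the two are interchangeable.
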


\begin{proof}
Consider the polynomial $p(x) = x^q - x \in \F_q[x]$. As every element of $\F_q$ is a root of this 
polynomial, the same is certainly true for any diagonalizable operator on $V$. Thus if $T \in \End(V)$ 
is diagonalizable then $T^q = T$. Conversely, suppose that $T$ satisfies $T^q = T$, so that $p(T) = 0$ 
in the algebra $\End(V)$. It follows that $T$ has a minimal polynomial $\mu(x)$ (in the usual sense) 
that divides $p(x) = \prod_{\lambda \in \F_q} (x - \lambda)$. 
Thus $\mu(x)$ splits into distinct linear factors over $\F_q$, and it follows from Proposition~\ref{prop:operator} that $T$ is diagonalizable.

Finally, $\D(V)$ is closed because it is the zero set of $p(x)$ interpreted as a continuous function 
$p \colon \End(V) \to \End(V)$.
\end{proof}

\begin{example}
Let $V$ be a vector space over a finite field with basis $\{v_i \mid i = 1, 2, \dots\}$, and 
consider the right shift operator $S \in \End(V)$ with $S(v_i) = v_{i+1}$ for all $i$. It is clear 
from Example~\ref{ex:right shift} that $H(S) = 0$, so that $S$ is diagonalizable on $H(S)$. 
However, as $S$ is not diagonalizable (having no eigenvectors), we have 
$S \notin \D(V) = \overline{\D(V)}$ thanks to the proposition above.
\end{example}

\bibliographystyle{amsplain}
\bibliography{infdiagss-arxiv-v3}

\end{document}